\newcommand{\RN}[1]{%
	\textup{\uppercase\expandafter{\romannumeral#1}}%
}
\def\bfs{\boldsymbol}
\def\wt{ \mathrm{wt}}
\def\Mot{ \mathrm{Mot}}
\def\erfc{ \mathrm{erfc}}
\DeclareMathOperator{\cros}{cr}
\DeclareMathOperator{\stat}{stat}
\DeclareMathOperator{\nest}{ne}
\DeclareMathOperator{\Mat}{Mat}
\DeclareMathOperator{\cl}{cl}
\def\C{\mathbb{C}}
\def\P{\mathbf{P}}
\def\R{\mathbb{R}}
\newcommand{\re}{\operatorname{Re}}
\NewDocumentCommand{\pFq}{O{}mmmmm}
 {
  \group_begin:
  \keys_set:nn { hypergeometric } { #1 }
  \hypergeometric_print:nnnnn { #2 } { #3 } { #4 } { #5 } { #6 }
  \group_end:
 }
\NewDocumentCommand{\hypergeometricsetup}{m}
 {
  \keys_set:nn { hypergeometric } { #1 }
 }
\newcommand{\discretebinom}{\genfrac[]{0pt}{}}
\newcommand\Matching[2]{\draw (#1,0) to [out=55,in=125] (#2,0);}
\newtcolorbox{example}[1]{breakable,
colbacktitle=gray!50!white, fonttitle=\bfseries, coltitle=black, title=Example: {#1}}
\theoremstyle{plain}
\newtheorem*{thm*}{Theorem}
\newtheorem{thm}{Theorem}[section]
\newtheorem{lem}[thm]{Lemma}
\newtheorem{prop}[thm]{Proposition}
\newtheorem*{prop*}{Proposition}
\newtheorem*{lem*}{Lemma}
\theoremstyle{definition}
\newtheorem*{eg*}{Example}
\newtheorem*{egs*}{Examples}
\newtheorem{defi}[thm]{Definition}
\theoremstyle{remark}
\newtheorem*{rmk*}{Remark}
\newtheorem*{rmks*}{Remarks}
\newtheorem{rem}[thm]{Remark}
\numberwithin{equation}{section}
\begin{document}
\title[$q$-deformed Gaussian unitary ensemble]{$q$-deformed Gaussian unitary ensemble: \\ spectral moments and genus-type expansions}


\author{Sung-Soo Byun}
\address{Department of Mathematical Sciences and Research Institute of Mathematics, Seoul National University, Seoul 151-747, Republic of Korea}
\email{sungsoobyun@snu.ac.kr}

\author{Peter J. Forrester}
\address{School of Mathematical and Statistics, The University of Melbourne, Victoria 3010, Australia}
\email{pjforr@unimelb.edu.au} 

\author{Jaeseong Oh}
\address{June E Huh Center for Mathematical Challenges, Korea Institute for Advanced Study, 85 Hoegiro, Dongdaemun-gu, Seoul 02455, Republic of Korea}
\email{jsoh@kias.re.kr}


\date{\today}

\thanks{Sung-Soo Byun was partially supported by the POSCO TJ Park Foundation (POSCO Science Fellowship) and by the New Faculty Startup Fund at Seoul National University.  Funding support to Peter Forrester for this research was through the Australian Research Council Discovery Project grant DP210102887. Jaeseong Oh was partially supported by a
KIAS Individual Grant (HP083401) via the Center for Mathematical Challenges at Korea Institute for Advanced Study.
}

\subjclass[2020]{Primary 60B20, 05A15; Secondary 33C20, 05A19, 05A30}
\keywords{Discrete orthogonal polynomial ensemble, $q$-Hermite weight, spectral moments, Flajolet--Viennot theory, genus expansion}

\begin{abstract}
The eigenvalue probability density function of the Gaussian unitary ensemble permits a $q$-extension related to the discrete $q$-Hermite weight and corresponding $q$-orthogonal polynomials. A combinatorial counting method is used to specify a positive sum formula for the spectral moments of this model. The leading two terms of the scaled $1/N^2$ genus-type expansion of the moments are evaluated explicitly in terms of the incomplete beta function. Knowledge of these functional forms allows for the smoothed leading eigenvalue density and its first correction to be determined analytically.
\end{abstract}

\maketitle

\section{Introduction}
 
\subsection{Unitary invariant ensembles and spectral moments}

For a given weight $\omega,$ we consider the eigenvalues $\bfs{x}=\{x_j\}_{j=1}^N$ of an $N \times N$ unitary invariant random matrix $X$ whose joint eigenvalue probability distribution follows
\begin{equation} \label{Gibbs}
d \P(\bfs{x}) \propto \prod_{j < k} (x_j-x_k)^2 \prod_{j=1}^N \omega(x_j) \,dx_j. 
\end{equation}
Such a random matrix is constructed according to its eigenvalue/eigenvector decomposition $X = U D U^\dagger$, where the diagonal matrix of eigenvalues is sampled according to (\ref{Gibbs}), and $U$ is a Haar distributed random unitary matrix corresponding to the eigenvectors.
The $1$-point function of $\bfs{x}$ is defined by 
\begin{equation} 
\rho_{N}(x) \equiv \rho_{N,\omega}(x) :=N \int_{ \R^{N-1} } \P(x,x_2,\dots,x_N) \,dx_2\,dx_3 \dots dx_N. 
\end{equation}
This can be expressed as 
\begin{equation} \label{spectral density gen}
\rho_{N}(x) = \sum_{j=0}^{N-1} \frac{ P_j(x)^2 }{ h_j } \omega(x) =  \frac{ P_{N}'(x) P_{N-1}(x) - P_{N-1}'(x) P_N(x)  }{h_{N-1}}  \omega(x),
\end{equation}
where $\{ P_j \}_{ j \ge 0 }$ is the sequence of \emph{monic} orthogonal polynomials satisfying the orthogonality condition
\begin{equation}
\int_\R P_j(x) P_k(x) \, \omega(x)\,dx  = h_j \, \delta_{jk}.  
\end{equation}
Here, the second expression of \eqref{spectral density gen} follows from the Christoffel-Darboux formula; see e.g. \cite[Ch.~5]{Fo10}.

The $p$-th spectral moment is defined by
\begin{equation} \label{spectral moment rhoN}
m_{N,p}:  = \mathbb{E} \Big[ \,\textup{Tr} X^{p} \Big] =  \mathbb{E}\Big[ \sum_{j=1}^N  x_j^p \Big]    = \int_\R  x^p \, \rho_N(x) \,dx, 
\end{equation}
where the expectation is taken with respect to the measure \eqref{Gibbs}.
We also denote 
\begin{equation} \label{spectral moment each term}
\mathfrak{m}_{p,j}:= \int_\R \frac{ x^p P_j(x)^2 }{ h_j } \omega(x)\,dx. 
\end{equation}
Then by \eqref{spectral density gen}, it follows that 
\begin{equation}
m_{N,p}= \sum_{ j=0 }^{N-1} \mathfrak{m}_{p,j}, \qquad  
\mathfrak{m}_{p,j} = m_{ j+1,p }- m_{ j,p }. 
\end{equation}  

\subsection{Spectral moments of the GUE}
 
As a prominent example, we discuss the spectral statistics of the Gaussian unitary ensemble (GUE).
Let us write 
\begin{equation}
\omega^{ \rm (H) }(x)= \frac{e^{-x^2/2}}{ \sqrt{2\pi} }
\end{equation}
for the standard Gaussian weight. 
With this weight, the measure \eqref{Gibbs} coincides up to a scaling factor with the eigenvalue distribution of the GUE; see e.g.~\cite[Definition 1.3.1]{Fo10} for the element-wise construction of such Hermitian random matrices.
The corresponding orthogonal polynomial is given by the Hermite polynomial
\begin{equation}
He_n(x)  = (-1)^n e^{x^2/2}\frac{d^n}{dx^n} e^{-x^2/2}, 
\end{equation}
which satisfies 
\begin{equation}
\int_\R He_n(x) He_m(x) \, \omega^{ \rm (H) }(x) \,dx= n!\,\delta_{nm}.
\end{equation}
It satisfies the three-term recurrence relation
\begin{equation} \label{Hermite three term}
He_{n+1}(x)  = x \,He_n(x)-n \, He_{n-1}(x). 
\end{equation}
By \eqref{spectral density gen}, the $1$-point function  $\rho_N^{ \rm (H) }$ of the GUE is then  given by 
\begin{align}
\begin{split}
\rho_N^{ \rm (H) } (x) & = \frac{ e^{-x^2/2} }{ \sqrt{2\pi} } \sum_{j=0}^{N-1} \frac{  He_j(x)^2   }{ j!  }
=  \frac{ e^{-x^2/2} }{\sqrt{2\pi} (N-1)! } \Big( N He_{N-1}(x)^2- (N-1) He_N(x) He_{N-2}(x) \Big). 
\label{GUE density CDI}
\end{split}
\end{align} 
We also write 
\begin{equation} \label{GUE matching}
\mathfrak{m}_{2p,j}^{ \rm (H) } = \int_\R x^{2p} \frac{He_j(x)^2}{j!} \omega^{\rm(H)}(x) \,dx, \qquad  m_{N,2p}^{ \rm (H) }= \sum_{j=0}^{N-1} \mathfrak{m}_{2p,j}^{ \rm (H) } 
\end{equation}
for the spectral moments of the GUE.

\begin{prop}[\textbf{Spectral moments of the GUE}, cf. \cite{HZ86,HT03,Ke99,FLSY23}] \label{Prop_GUE moments}
The following evaluation formulas hold true:
\begin{itemize}
    \item[(i)] \textbf{\textup{(Positive sum formula)}} We have 
    \begin{align}
     \label{GUE Matching positive}
\mathfrak{m}_{2p,j}^{ \rm (H) } &= (2p-1)!!  \sum_{l=0}^p \binom{j}{l} \binom{p}{l} 2^l, 
\\
\label{spectral moment of GUE eval}
m_{N,2p}^{ \rm (H) } &= (2p-1)!! \sum_{l=0}^{p}  \binom{N}{l+1} \binom{p}{l} 2^{l}.
    \end{align} 
     \item[(ii)] \textbf{\textup{(Alternating sum formula)}} We have  
    \begin{align}
   \label{GUE Matching alternating}
\mathfrak{m}_{2p,j}^{ \rm (H) }& =  (2p-1)!! \sum_{r=0}^{p} (-1)^{ r }  \binom{j+2p-2r}{2p}  \binom{p}{r}, 
\\
 \label{spectral moment of GUE eval signed}
 m_{N,2p}^{ \rm (H) }
 &= (2p-1)!!\,\sum_{ r=0 }^{ p-1 } (-1)^{ r } 
\bigg[ \binom{N+2p-2r-1}{2p} + \binom{N+2p-2r-2}{2p}  \bigg] 
\binom{p-1}{ r } .
    \end{align}
\end{itemize} 
\end{prop}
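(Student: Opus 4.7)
The plan is to establish the identities \eqref{GUE Matching positive} and \eqref{GUE Matching alternating} at the level of the diagonal quantities $\mathfrak{m}_{2p,j}^{\rm (H)}$; the sums \eqref{spectral moment of GUE eval} and \eqref{spectral moment of GUE eval signed} for $m_{N,2p}^{\rm (H)}$ then follow by summing in $j=0,\dots,N-1$ and applying the hockey-stick identity $\sum_{j=0}^{N-1}\binom{j}{l}=\binom{N}{l+1}$.

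For the positive formula \eqref{GUE Matching positive}, I would combine the classical Hermite linearization
\[
He_j(x)^2 \;=\; \sum_{k=0}^{j}\binom{j}{k}^2 k!\, He_{2j-2k}(x)
\]
with the monomial expansion $x^{2p}=\sum_{s=0}^{p}\tfrac{(2p)!}{2^{s}\,s!\,(2p-2s)!}He_{2p-2s}(x)$. Substituting both into \eqref{GUE matching} and invoking orthogonality $\int He_a He_b\,\omega^{\rm (H)}\,dx=a!\,\delta_{a,b}$ collapses the double sum to a single sum in $k$; after the reindexing $l=j-k$ and the identification $(2p)!/(2^p p!) = (2p-1)!!$, the right-hand side reorganizes cleanly as $(2p-1)!!\sum_{l=0}^{p}\binom{j}{l}\binom{p}{l}2^{l}$. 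Summing in $j$ via hockey-stick yields \eqref{spectral moment of GUE eval} immediately.

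For the alternating formula \eqref{GUE Matching alternating}, I would expand the right-hand side in the basis $\{\binom{j}{l}\}_{l\geq 0}$ using Chu--Vandermonde, $\binom{j+2p-2r}{2p}=\sum_{l}\binom{2p-2r}{2p-l}\binom{j}{l}$, reducing the claim to the scalar identity
\[
\sum_{r=0}^{p}(-1)^r\binom{p}{r}\binom{2p-2r}{2p-l} \;=\; \binom{p}{l}\,2^{l}\qquad (l\geq 0),
\]
which can be read off as the coefficient of $z^{2p-l}$ in $(1+z)^{2p}\bigl(1-(1+z)^{-2}\bigr)^{p} = z^{p}(z+2)^{p}$ (the case $l>p$ giving $0$ on both sides, which also confirms that the alternating sum is a polynomial in $j$ of degree $\leq p$). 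Summing \eqref{GUE Matching alternating} over $j=0,\dots,N-1$ via hockey-stick produces $(2p-1)!!\sum_{r=0}^{p}(-1)^r\binom{p}{r}\binom{N+2p-2r}{2p+1}$; splitting $\binom{p}{r}=\binom{p-1}{r}+\binom{p-1}{r-1}$ and reindexing telescopes this to $(2p-1)!!\sum_{r=0}^{p-1}(-1)^r\binom{p-1}{r}\bigl[\binom{N+2p-2r}{2p+1}-\binom{N+2p-2r-2}{2p+1}\bigr]$, and two applications of Pascal's identity collapse the bracket into $\binom{N+2p-2r-1}{2p}+\binom{N+2p-2r-2}{2p}$, reproducing \eqref{spectral moment of GUE eval signed}. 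The main obstacle is the generating-function identity in the middle display, which takes a short but not entirely obvious manipulation; everything else is routine binomial bookkeeping.
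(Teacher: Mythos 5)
Your proposal is correct, but it proves the proposition by a genuinely different route than the paper. The paper's proof is combinatorial: via Flajolet--Viennot theory (Proposition \ref{Prop_comb rep spec moment}) and the Hermite-history bijection it identifies $\mathfrak{m}_{2p,j}^{\rm (H)}$ with the cardinality $|\Mat_{2p+j,p}^{>j}|$ of matchings on $2p+j$ vertices whose first $j$ vertices are openers or isolated, then obtains \eqref{GUE Matching positive} by stratifying these matchings according to the number of arcs meeting the first $j$ vertices, and \eqref{GUE Matching alternating} by a marking/inclusion--exclusion count of arcs among the first $j$ vertices. You instead work algebraically: the Hermite linearization $He_j^2=\sum_k\binom{j}{k}^2k!\,He_{2j-2k}$ together with the monomial expansion of $x^{2p}$ and orthogonality gives \eqref{GUE Matching positive} directly (your reindexing and the identification $(2p)!/(2^pp!)=(2p-1)!!$ check out), and your Chu--Vandermonde reduction plus the generating-function identity $(1+z)^{2p}\bigl(1-(1+z)^{-2}\bigr)^p=z^p(z+2)^p$ correctly shows the alternating sum coincides with the positive one, so \eqref{GUE Matching alternating} follows from part (i) rather than from an independent count; your hockey-stick and Pascal manipulations yielding \eqref{spectral moment of GUE eval} and \eqref{spectral moment of GUE eval signed} are also correct. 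What each approach buys: yours is shorter and self-contained, needing only classical Hermite identities and binomial bookkeeping, and it incidentally gives an algebraic proof of the identity that the paper's remark attributes to double counting. The paper's combinatorial route is chosen because the matching interpretation \eqref{eq: combinatorial interpretation of moments of Hermite} is precisely what gets $q$-deformed later (via the statistic $\cros(M)+2\nest(M)$ in Proposition \ref{prop: qGUE sum of matchings} and the proof of Theorem \ref{Thm_qGUE moments}); your linearization-based argument does not extend as directly to that setting, since the analogous $q$-Hermite manipulations are considerably less transparent.
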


\begin{rem}
The formula \eqref{spectral moment of GUE eval} was shown in \cite{HZ86}, where the three-term recurrence relation 
\begin{equation} \label{HZ formula}
(p+1)m_{N,2p}^{ \rm (H) } = (4p-2)N\, m_{N,2p-2}^{ \rm (H) } +(p-1)(2p-1)(2p-3) \,m_{N,2p-4}^{ \rm (H) }
\end{equation}
was also established. 
One notes too a combinatorial proof of \eqref{spectral moment of GUE eval} due to Kerov, see \cite[Theorems 1 and 2]{Ke99}. 
In particular, in \cite[Lemma 1]{Ke99}, the quantity $\mathfrak{m}_{2p,j}^{ \rm (H) }$ was identified with the number of rook configurations on Ferrer boards; see also \cite{HSS92}.  
We also mention that the spectral moment can  be expressed as 
\begin{align}
\begin{split}
m_{N,2p}^{ \rm (H) } = (2p-1)!!\,N\, {}_2 F_1 (-p, 1-N \, ; \, 2 \, ; \,2); 
\end{split}
\end{align}
see e.g. \cite[Eqs.(4.2) and (4.3)]{CMOS19} and \cite{HT03,WF14}, where
\begin{equation}
   {}_2 F_1(a,b;c;z) := \frac{ \Gamma(c) }{\Gamma(a)\Gamma(b)} \sum_{s=0}^{\infty} \frac{\Gamma(a+s)\Gamma(b+s)}{\Gamma(c+s) \ s!}z^s 
\end{equation}
is the Gauss hypergeometric function.  
\end{rem}

\begin{rem}
Notice that two different expressions \eqref{GUE Matching positive} and \eqref{GUE Matching alternating} indicate the identity
\begin{equation}
 \sum_{l=0}^p \binom{j}{l} \binom{p}{l} 2^l= \sum_{r=0}^{p} (-1)^{ r }  \binom{j+2p-2r}{2p} \binom{p}{r}. 
\end{equation}
This identity seems to most naturally follow from a double counting; see Subsection~\ref{subsec: spectral moments of the GUE revisited}. 
\end{rem}

\begin{rem}
The first few values of the spectral moments are given by 
\begin{align}
\begin{split}
&m_{N,0}^{ \rm (H) }= N, \qquad 
m_{N,2}^{ \rm (H) }= N^2, \qquad m_{N,4}^{ \rm (H) }= 2N^3+N,
\\
&m_{N,6}^{ \rm (H) } = 5N^4+10N, \qquad 
m_{N,8}^{ \rm (H) }= 14N^5+70N^3+21N. 
\end{split}
\end{align}
On the other hand, it follows from the three-term recurrence (\ref{HZ formula}) that 
\begin{equation}\label{1.26}
\lim_{N \to \infty} \frac{1 }{ N^{ p+1 } }  m_{N,2p}^{ \rm (H) }= C_p := \frac{1}{p+1} \binom{2p}{p},
\end{equation}    
where $C_p$ is the $p$-th Catalan number. These limiting moments can be used to reconstruct the limiting scaled spectral density (see the proof of Proposition \ref{Prop_density qGUE} below) and give rise to Wigner's semi-circle law; see e.g.~\cite{AGZ09}.
\end{rem} 

\begin{rem}[Genus expansion]
The Stirling numbers of the first kind \cite[Section 26.8]{NIST} are given by
\begin{equation} \label{def of Stirling number}
s(n,k) = (-1)^{n-k} \sum_{ 1 \le b_1 <  \dots < b_{n-k} \le n-1 } b_1 b_2 \dots b_{n-k}, \qquad (n > k \ge 1).
\end{equation}
Their generating function is given by 
\begin{equation}
\sum_{k=0}^n s(n,k)x^k = \frac{ \Gamma(x+1) }{ \Gamma(x-n+1) }; 
\end{equation}
see e.g. \cite[Eq.(26.8.7)]{NIST}.
By using this and rearranging the terms in the expression (\ref{spectral moment of GUE eval}) of $m_{N,2p}^{ \rm (H) }$, we have
\begin{align*}
m_{N,2p}^{ \rm (H) } & = (2p-1)!!  \sum_{l=0}^{p} \bigg( \sum_{k=0}^{l+1} \frac{ s(l+1,k)}{(l+1)!}  N^k \bigg) \binom{p}{l} 2^{l}
\\
&=   (2p-1)!!  \sum_{r=0}^{p+1}  \bigg(  \sum_{m=0}^r  \frac{ s(p+1-m,p+1-r)}{(p+1-m)!}   \binom{p}{m} 2^{p-m} \bigg)   N^{p+1-r}.   
\end{align*}
Furthermore, it follows from the identity 
\begin{equation} \label{genus expansion; GUE}
 \sum_{m=0}^r  \frac{ s(p+1-m,p+1-r)}{(p+1-m)!}   \binom{p}{m} 2^{p-m}  =0, \qquad \textup{if } r \textup{ is odd} 
\end{equation}
that we have the $1/N^2$ expansion
\begin{equation}\label{1.30}
m_{N,2p}^{ \rm (H) } =    \sum_{g=0}^{\lfloor (p+1)/2 \rfloor}  \mathcal{E}_g(p)  N^{p+1-2g}, 
\end{equation}
where 
\begin{equation}\label{1.30a}
 \mathcal{E}_g(p)=    (2p-1)!! \sum_{m=0}^{2g}  \frac{ s(p+1-m,p+1-2g)}{(p+1-m)!}   \binom{p}{m} 2^{p-m}.   
\end{equation}
The coefficient $\mathcal{E}_g(p)$ corresponds to the count of pairings among the edges of a $2p$-gon, which is dual to a map on a compact Riemann surface of genus $g$; see \cite{HZ86}. 
Therefore, the expansion \eqref{genus expansion; GUE} is commonly referred to as the genus expansion.
In particular, the first few values of $\mathcal{E}_g$ are given as 
\begin{align}
\mathcal{E}_0(p) &= C_p , 
\qquad 
\mathcal{E}_1(p)  = C_p \, \frac{(p+1)!}{(p-2)!}  \frac{ 1}{12},  
\qquad 
\mathcal{E}_2(p)= C_p \, \frac{(p+1)!}{(p-4)!} \frac{ 5p-2 }{1440},
\\ 
\mathcal{E}_3(p) &= C_p \,\frac{(p+1)!}{(p-6)!}  \frac{ 35p^2 - 77p + 12 }{ 362880 },
\qquad 
\mathcal{E}_4(p) = C_p\,\frac{(p+1)!}{(p-8)!}  \frac{  175 p^3 - 945 p^2 + 1094 p -72  }{ 87091200 }.
\end{align}
The small order cases also follow from \cite[Theorem 7]{WF14}, which were based on a large $N$ analysis of  (\ref{HZ formula}). The form of $\mathcal{E}_g(p)$ given by (\ref{1.30a}) appears to be new. 
We also mention that  $\mathcal{E}_g(p)$ satisfies the two parameter recurrence relation \cite{HZ86}
\begin{equation}\label{1.34}
(p+2)  \mathcal{E}_{g}(p+1) = 2 (2p+1)  \mathcal{E}_g(p) +  p(2p+1)  (2p-1) \mathcal{E}_{g-1}(p-1) ,
\end{equation} 
which is also known as the topological recursion. 
\end{rem}

\subsection{Discrete $q$-deformed GUE}

We now discuss the discrete $q$-formed version of the GUE.
For this purpose, we shall frequently use the notations
\begin{align}
(a;q)_n  := (1-a)(1-aq) \dots (1-aq^{n-1}),
\qquad  (a_1,\dots, a_k ; q)_\infty := \prod_{l=0}^\infty (1-a_1 q^l) \dots (1-a_k q^l).\end{align}
The $q$-hypergeometric series ${}_r\phi_s$ is defined by
\begin{equation} \label{def of qHypergeometric}
{}_r\phi_s\left(\begin{matrix} a_1, \dots, a_r \smallskip \\ b_1, \dots, b_s \end{matrix} \,;\, q, z\right) = \sum_{k=0}^\infty \frac{ (a_1;q)_k \dots (a_r;q)_k }{ (b_1;q)_k \dots (b_s;q)_k } \frac{z^k}{ (q;q)_k } \Big( (-1)^k q^{ k(k-1)/2 } \Big)^{1+s-r}; 
\end{equation}
see e.g. \cite[Chapter 17]{NIST}.
Note that if one of the parameters $a_1, \dots, a_r$ is a non-positive power of $q$, then the sum terminates.
We also mention that as $ q \to 1$, it recovers the more familiar $ {}_rF_{s}$ hypergeometric function:  
\begin{equation}
\label{hypergeometric continumm limit}
  {}_r\phi_s\left(\begin{matrix} q^{c_1}, \dots, q^{c_r} \smallskip \\ q^{d_1}, \dots, q^{d_s} \end{matrix} \,;\, q, (q-1)^{1+s-r}\,z\right)
 \to \pFq{r}{s}{c_1,\dots,c_r}{d_1, \dots,d_s}{z} := \sum_{k=0}^\infty \frac{(c_1)_k \dots (c_r)_k }{ (d_1)_k \dots (d_s)_k } \frac{z^k}{k!} .
\end{equation}

Let us recall that the $q$-derivative $D_q$ is defined by 
\begin{equation}
D_q f(x)= \frac{f(x)-f(qx)}{(1-q)x}. 
\end{equation}
Recall also that its anti-derivative, the Jackson $q$-integral, is defined by
\begin{align}
\int_0^\infty f(x)\,d_qx  =(1-q) \sum_{k=-\infty}^\infty q^k \, f(q^k) ,
\qquad 
\int_0^\alpha f(x)\,d_qx  =(1-q)\sum_{k=0}^\infty \alpha  q^k\, f(\alpha q^k),
\end{align}
and 
\begin{equation}
\int_a^b f(x)\,d_qx  = \int_0^b f(x)\,d_q x - \int_0^a f(x)\,d_q x. 
\end{equation}
Note too that  
\begin{equation}\label{1.43}
\int_{-1}^1 f(x) \,d_q x =\frac{1}{a} \int_{-a}^a f\Big( \frac{x}{a} \Big) \,d_qx. 
\end{equation}

The discrete $q$-Hermite weight and its scaled version are defined by 
\begin{equation}\label{1.44}
\omega^{ \rm (dH) } (x) := \frac{ (qx,-qx; q)_\infty }{ (q,-1,-q;q)_\infty }, \qquad \widehat{\omega}^{ \rm (dH) } (x):=  \sqrt{1-q}\,\omega^{  {\rm (dH)} } ( \sqrt{1-q}\,x ). 
\end{equation}
The support of $\omega^{ \rm (dH) } (x)$ is taken as the $q$-lattice $\pm q^k$, $k=0,1,\dots$ (note that for these values of $x$, $\omega^{ \rm (dH) } (x) > 0$), which in the continuum limit $q \to 1^-$ is the interval $[-1,1]$. 
In this same limit, $\widehat{\omega}^{ \rm (dH) } (x)$ has support of all the real line, and moreover it recovers the Gaussian weight 
\begin{equation}
\lim_{ q \to 1^- }  \widehat{\omega}^{ \rm (dH) } (x) = \frac{1}{ \sqrt{2\pi}  }  e^{-x^2/2}; 
\end{equation}
see e.g. \cite{AAT19}. 
For $\omega^{ \rm (dH) } (x)$, one notes
that the even moments are given by 
\begin{equation}\label{1.43a}
\int_{-1}^1 x^{2p} \,\omega^{\rm (dH) } (x) \,d_qx= (1-q) (q;q^2)_p;  
\end{equation}
see e.g.~\cite{NS13},
whereas the odd moments vanish. The moments of $\widehat{\omega}^{ \rm (dH) } (x)$ follow from (\ref{1.43a}) by applying (\ref{1.43}).

We shall use several different conventions for the $q$-Hermite polynomials.
\begin{itemize}
    \item The $q$-Hermite polynomial $H_n(x;q)$ is defined by 
\begin{align}
\begin{split}
H_n(x;q) &:= x^n \, {}_2\phi_0\left(\begin{matrix} q^{-n}, q^{-n+1} \smallskip \\ - \end{matrix} \,;\, q^2, \frac{q^{2n-1}}{x^2} \right)
\\
&=   \sum_{k=0}^{n/2}(-1)^k \,\frac{ (q^{-n};q^2)_k (q^{-n+1};q^2)_k q^{k(2n-k)} }{ (q^2;q^2)_k }   x^{n-2k};
\end{split}
\end{align}
see \cite[Section 14.28]{KLS10}.
It satisfies the orthogonality
\begin{equation}
\int_{-1}^1 H_m(x;q) H_n(x;q) \omega^{ \rm (dH) } (x)  \,d_qx = (1-q) (q;q)_n q^{ n(n-1)/2 } \delta_{n,m}
\end{equation}
and the three-term recurrence relation 
\begin{equation}
x\,H_n(x;q)=H_{n+1}(x;q)  + q^{n-1} (1-q^n) H_{n-1}(x;q). 
\end{equation}
\item  We also set 
\begin{equation}
\begin{split}
\widehat{H}_n(x;q)&:= \frac{1}{(1-q)^{n/2}}  H_n\Big( \sqrt{1-q}\,x;q\Big)
\\
& =    \sum_{k=0}^{n/2} (-1)^k \,\frac{ (q^{-n};q^2)_k (q^{-n+1};q^2)_k q^{k(2n-k)} }{ (q^2;q^2)_k } \frac{1}{(1-q)^k}  x^{n-2k}.  
\end{split}
\end{equation}
Then it follows that 
\begin{equation}\label{3-term recurrence hat H}
x\,\widehat{H}_n(x;q)= \widehat{H}_{n+1}(x;q)  + q^{n-1} [n]_q\,\widehat{H}_{n-1}(x;q),
\end{equation}
where we used the convention that the $q$-analogue of an integer $n$ given by
\[
    [n]_q := \dfrac{1-q^n}{1-q}.
\]
Note that in the continuum limit, it recovers the classical Hermite polynomial  
\begin{equation}
\lim_{ q\to 1^- } \widehat{H}_n(x;q) = He_n(x).
\end{equation}
\end{itemize}

\begin{rem}
In \cite{ISV87}, the $q$-Hermite polynomial $\widetilde{H}_n(x;q)$ that satisfies the recurrence relation
\begin{equation}\label{3-term recurrence tilde H}
x\,\widetilde{H}_n(x;q)= \widetilde{H}_{n+1}(x;q) + [n]_q\,\widetilde{H}_{n-1}(x;q)
\end{equation}
has been studied.
We emphasize that while $H_n$ and $\widehat{H}_n$ are related by simple scaling, $\widetilde{H}_n$ is not. 
\end{rem}

According to (\ref{spectral density gen}) the density $\rho_{N}^{ \rm (dH) }$ of the discrete $q$-GUE is given by  
\begin{equation}
\rho_{N}^{ \rm (dH) }(x)=\frac{1}{1-q} \sum_{j=0}^{N-1} \frac{ H_j(x,q)^2 }{ (q;q)_j q^{ j(j-1)/2 }   }\omega^{ \rm (dH) } (x)  . 
\end{equation}
Similarly, we define
\begin{equation}\label{1.55}
\widehat{\rho}_N^{\rm (dH)}(x) := \sqrt{1-q}\, \rho_{N}^{ \rm (dH) }( \sqrt{1-q} \, x). 
\end{equation}
Let us define the corresponding spectral moments
\begin{align}
m_{N,2p}^{ \rm (dH) }& := \int_{-1}^1 |x|^{2p} \rho_{N}^{ \rm (dH) }(x) \,d_q x , 
\label{1.55a}\\
\widehat{m}_{N,2p}^{ \rm (dH) } &:= \int_{ -1/\sqrt{1-q} }^{1/\sqrt{1-q} } |x|^{2p} \,  \widehat{\rho}_N^{\rm (dH)}(x) \,d_q x =  \frac{1}{(1-q)^p}\,  m_{N,2p}^{ \rm (dH) }.
\end{align} 
We also denote 
\begin{equation}
\mathfrak{m}_{2p,j}^{ \rm (H) } =  \int_{-1}^1  \frac{ |x|^{2p}  H_j(x,q)^2  \omega^{ \rm (dH) } (x)  }{ (1-q)\, (q;q)_j q^{ j(j-1)/2 }   }  \,d_qx, \qquad  \widehat{\mathfrak{m}}_{2p,j}^{ \rm (H) } = \frac{1}{(1-q)^p} \mathfrak{m}_{2p,j}^{ \rm (H) }
\end{equation}
so that 
\begin{equation}
m_{N,2p}^{ \rm (dH) }= \sum_{j=0}^{N-1} \mathfrak{m}_{2p,j}^{ \rm (H) }, \qquad \widehat{m}_{N,2p}^{ \rm (dH) }= \sum_{j=0}^{N-1} \widehat{\mathfrak{m}}_{2p,j}^{ \rm (H) }.
\end{equation}

\subsection{Objectives and previous literature}
A celebrated result in the application of random matrices is that the $1/N^2$ expansion of the GUE moments (\ref{1.30}), albeit with the coefficients defined implicitly via the two-variable recurrence relation (\ref{1.34}), counts the number of pairings of the sides of a $2p$-gon which are dual to a map on a compact orientable Riemann surface of genus $g$ (equivalently, a surface of genus $g$ results when the sides of the polygon are identified in pairs). This line of work was initiated by Br\'ezin et al.~\cite{BIPZ78}, with the recursive specification of the coefficients obtained in \cite{HZ86}.
As noted below (\ref{1.26}), the leading term in the expansion (\ref{1.30}) was isolated much earlier in the development of random matrix theory, specifically as a way of computing the limiting spectral density \cite{Wi55}.

In the ensuing years eigenvalue probability distributions of the general form (\ref{Gibbs}) have occurred in various applications, notably in matrix models of low-dimensional quantum field theories \cite{Ma14}, fermionic systems \cite{DDMS19}, quantum transport \cite{Be97}, number theory \cite{KS03}, wireless communication \cite{TV04} and tiling models \cite{BG16}. Consideration of observables in these settings has motivated a number of studies into the corresponding spectral moments; references include \cite{On81,CRS06,KSS09,MS11,Wi12,MRW15,FLD16,STGK16,BK18,JKM20,ABGS21,No22,AKW22,Fo22,MS12,MS13}.

An explanation of the three-term recurrence (\ref{HZ formula}) in terms of differential properties of the Laplace transform of $\rho_N^{\rm (H)}$, first uncovered in \cite{Le04} (see also \cite{HT03}), has given motivation to the study of spectral moments in other random matrix ensembles exhibiting this and related mathematical structures \cite{Le09,WF14,CMOS19,ABGS21a,RF21,FR21,By23,BF23,CDO21,GGR21}. Considerations along these lines relate to integrable properties of the corresponding ensembles. Specifically, in \cite{MPS20} the ensemble (\ref{Gibbs}) with discrete $q$-Hermite weight (\ref{1.44}) was isolated as possessing distinguished integrable properties,
and which furthermore has the status of being
a generalisation of the GUE natural from the viewpoint of $q$-orthogonal polynomial theory. The subsequent works  \cite{FLSY23} and \cite{Co21} furthered the study of the integrable properties of this model, building on \cite{CCO20,Le05}.
We also refer to \cite{BS09,BGG17,FH22,Jo01,LSYF23,Ol21} for related studies on discrete orthogonal polynomial ensembles. 

The aim of the present work is to study integrable properties of spectral moments and the eigenvalue density of the discrete $q$-Hermite unitary invariant random matrix ensemble beyond the results of the earlier works \cite{MPS20,FLSY23,Co21}. The details of our findings are specified in the next section. They include a positive sum formula for the spectral moments \eqref{qGUE Matching positive}, the leading two terms of the scaled $1/N^2$ expansion of this formula (Theorem \ref{Thm_genus one}), and the form of the smoothed leading eigenvalue density and its leading correction (Theorem \ref{Thm_genus one density}).

\section{Main results}

\subsection{Spectral moments}

As a $q$-analogue of Proposition~\ref{Prop_GUE moments}, we have the following.

\begin{thm}[\textbf{Spectral moments of the $q$-deformed GUE}] \label{Thm_qGUE moments}
The following hold true:
\begin{itemize}
    \item[(i)] \textbf{\textup{(Positive sum formula)}} We have 
\begin{align}
\begin{split}
 \label{qGUE Matching positive}
\widehat{\mathfrak{m}}_{2p,j}^{ \rm (H) } & = \sum_{l=  0}^p  q^{(j-l)(2p-l)+l(l-1)/2} \discretebinom{j}{l}_q \dfrac{[2p]_q!}{[2p-2l]_q!! \, [l]_q!} ,
\end{split}
\\
\widehat{m}_{N,2p}^{ \rm (dH) } &= \sum_{j=0}^{N-1} \sum_{l=  0}^p  q^{(j-l)(2p-l)+l(l-1)/2} \discretebinom{j}{l}_q \dfrac{[2p]_q!}{[2p-2l]_q!! \, [l]_q!} .  \label{spectral moment of qGUE eval}
\end{align} 
Taking $q \to 1$, we reclaim \eqref{GUE Matching positive} and \eqref{spectral moment of GUE eval}. 
\smallskip 
     \item[(ii)] \textbf{\textup{(Alternating sum formula)}} We have  
\begin{align}
\begin{split}
\widehat{\mathfrak{m}}_{2p,j}^{ \rm (dH) }& = [2p-1]_q !!     \sum_{ r=0 }^{ p } (-1)^{ r } q^{ r ( r - 1  ) }  \discretebinom{j+2p-2r}{2p}_q  \discretebinom{p}{ r }_{q^2},  \label{qGUE Matching alternating}
\end{split}
\\
\widehat{m}_{N,2p}^{ \rm (dH) } 
&= [2p-1]_q !! \sum_{ r=0 }^{ p-1 } (-1)^{ r } q^{ r ( r +1  ) } 
\bigg( \discretebinom{N+2p-2r-1}{2p}_q + \discretebinom{N+2p-2r-2}{2p}_q  \bigg)
\discretebinom{p-1}{ r }_{q^2} . \label{spectral moment of qGUE eval signed}
\end{align} 
Taking $q \to 1$, we reclaim \eqref{GUE Matching alternating} and \eqref{spectral moment of GUE eval signed}. 
\end{itemize} 
\end{thm}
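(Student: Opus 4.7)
The plan is to prove Theorem~\ref{Thm_qGUE moments} via the combinatorial theory of Flajolet--Viennot applied to the three-term recurrence \eqref{3-term recurrence hat H}, together with $q$-hypergeometric summation identities that convert between the two closed forms.

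For part (i), Flajolet--Viennot identifies $\widehat{\mathfrak{m}}_{2p,j}^{\rm (dH)}$ with a weighted sum over nonnegative $\pm 1$ lattice paths of length $2p$ from height $j$ to height $j$, where each down-step from height $n$ to height $n-1$ contributes the recurrence coefficient $q^{n-1}[n]_q$ read off from \eqref{3-term recurrence hat H}. I would then stratify such paths according to an integer $l\in\{0,\ldots,p\}$, interpreted as the number of distinct levels strictly below $j$ that the path visits. The contribution of the sub-path below level $j$ factorises as a $q$-binomial $\discretebinom{j}{l}_q$ (choosing which $l$ of the levels $0,\ldots,j-1$ are used) times a $q$-weight whose exponent consolidates to $(j-l)(2p-l)$ after expanding $q^{n-1}[n]_q$ on each down-step and commuting $q$-factors past the above-floor portion. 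The remaining above-floor part of the path contributes $[2p]_q!/([2p-2l]_q!!\,[l]_q!)$ together with a Gaussian $q$-exponent $q^{l(l-1)/2}$, which is the $l$-th moment-type contribution of a shifted path with $2p$ steps and a floor of height $l$. Assembling the pieces gives \eqref{qGUE Matching positive}, and summing over $j$ yields \eqref{spectral moment of qGUE eval}.

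For part (ii), I would derive the alternating sum formula from \eqref{qGUE Matching positive} by recognising the sum over $l$ as a terminating ${}_3\phi_2$ series and applying a Pfaff--Saalsch\"utz-type transformation to convert it into a second terminating series in a new index $r$ whose terms carry the alternating sign $(-1)^r q^{r(r-1)}$ and the $q$-binomials $\discretebinom{j+2p-2r}{2p}_q$ and $\discretebinom{p}{r}_{q^2}$ appearing in \eqref{qGUE Matching alternating}. The prefactor $[2p-1]_q!!$ is extracted from the regrouping $[2p]_q!/[2p-2l]_q!! = [2p-1]_q!! \cdot \prod_{i=0}^{l-1}[2p-2i]_q$. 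Summing \eqref{qGUE Matching alternating} over $j=0,\ldots,N-1$ using the $q$-hockey-stick identity $\sum_{j=0}^{N-1}\discretebinom{j+m}{n}_q = \discretebinom{N+m}{n+1}_q - \discretebinom{m}{n+1}_q$, and then regrouping consecutive shifts in $r$ into a single bracketed expression, yields the two-term $q$-binomial combination of \eqref{spectral moment of qGUE eval signed}. In both parts, the $q\to 1$ limit reduces to the classical identities already listed in Proposition~\ref{Prop_GUE moments}.

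The main obstacle is expected to be the bookkeeping in the path decomposition of part (i): the weight $q^{n-1}[n]_q = q^{n-1}+q^n+\cdots+q^{2n-2}$ at each down-step is itself a $q$-sum, so distributing these $q$-powers across the path and accounting for the commutations needed to bring the excursions below level $j$ into canonical position requires repeated use of $q$-binomial recurrences such as $\discretebinom{n}{k}_q = q^k \discretebinom{n-1}{k}_q + \discretebinom{n-1}{k-1}_q$ in order to collapse intermediate expressions into the compact exponent $(j-l)(2p-l)+l(l-1)/2$. For part (ii), identifying the correct $q$-hypergeometric transformation is more routine once the positive form is in hand, but care is needed because the two $q$-binomials in \eqref{qGUE Matching alternating} are taken with respect to different bases ($q$ and $q^2$), so one must split or re-index the terminating ${}_3\phi_2$ series appropriately before invoking a standard identity.
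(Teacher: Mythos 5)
Your overall starting point for part (i) — Flajolet--Viennot applied to the recurrence \eqref{3-term recurrence hat H}, so that $\widehat{\mathfrak{m}}_{2p,j}^{\rm (dH)}$ is a weighted sum over Dyck paths from height $j$ to height $j$ with down-step weights $\lambda_n=q^{n-1}[n]_q$ — agrees with the paper, but your central stratification step does not work as stated. You stratify by ``$l$ = the number of distinct levels strictly below $j$ that the path visits'' and claim the below-$j$ contribution factorises as $\discretebinom{j}{l}_q$ by ``choosing which $l$ of the levels $0,\dots,j-1$ are used''; a $\pm1$ path can only visit the consecutive levels $j-1,\dots,j-l$, so there is no such choice, and already at $q=1$, $p=1$ the depth stratification gives contributions $j+1$ (path never below $j$) and $j$ (path dips to $j-1$), not the summands $1$ and $2j$ of \eqref{GUE Matching positive}. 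The correct statistic, which is what the paper uses, is not a function of the bare path at all: one must first expand each weight $q^{n-1}[n]_q$ into labelled down-steps (Hermite histories), transfer to matchings, and stratify by the number $l$ of arcs incident to the first $j$ vertices, i.e.\ by $\Mat_{2p+j,p}^{>j}(l)$. The paper then needs Proposition~\ref{prop: qGUE sum of matchings} (the weight is $q^{\cros(M)+2\nest(M)}$), an induction giving $\sum_{M\in\Mat_{a,b}}q^{\stat(M)}=\discretebinom{a}{b}_q[2b-1]_q!!$, an arc-insertion argument producing the factor $q^{l(l-1)/2}[2p]_q!/[2p-l]_q!$, and a projection/isolated-vertex-insertion argument producing $q^{(j-l)(2p-l)}\discretebinom{j}{l}_q$. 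None of this is supplied in your sketch; the exponent $(j-l)(2p-l)+l(l-1)/2$ and the factor $[2p]_q!/([2p-2l]_q!!\,[l]_q!)$ are asserted rather than derived, and your own ``main obstacle'' paragraph essentially concedes that the bookkeeping you would need is unresolved.

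For part (ii) there are two concrete problems. First, the summation identity you invoke, $\sum_{j=0}^{N-1}\discretebinom{j+m}{n}_q=\discretebinom{N+m}{n+1}_q-\discretebinom{m}{n+1}_q$, is false for general $q$: the $q$-Pascal rule gives $\discretebinom{a+1}{b+1}_q-\discretebinom{a}{b+1}_q=q^{a-b}\discretebinom{a}{b}_q$, so the telescoping requires a $j$-dependent power $q^{\,j+m-n}$ which is absent from \eqref{qGUE Matching alternating}; this is exactly the obstruction the paper points out when it remarks that, unlike at $q=1$, consecutive Pascal-type summation does not simplify $\sum_j q^{j(2p-l)}\discretebinom{j}{l}_q$. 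Accordingly, the passage from \eqref{qGUE Matching alternating} to \eqref{spectral moment of qGUE eval signed} (note the shift from $q^{r(r-1)}$, $\discretebinom{p}{r}_{q^2}$ to $q^{r(r+1)}$, $\discretebinom{p-1}{r}_{q^2}$) is not a hockey-stick consequence. Second, the ``Pfaff--Saalsch\"utz-type transformation'' converting \eqref{qGUE Matching positive} into \eqref{qGUE Matching alternating} is never identified or verified, and the mixed bases $q$ and $q^2$ make this genuinely delicate; the paper itself does not take this route — it quotes \eqref{spectral moment of qGUE eval signed} from \cite{Ve14} and \cite{FLSY23} and obtains \eqref{qGUE Matching alternating} by the differencing $\widehat{\mathfrak{m}}_{2p,j}^{\rm (dH)}=\widehat{m}_{j+1,2p}^{\rm (dH)}-\widehat{m}_{j,2p}^{\rm (dH)}$, while a direct combinatorial proof of the alternating formula is only conjectured (Remark~\ref{Lem: Mat 2p+j,p with r}). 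As it stands, both halves of your proposal have gaps that would need to be filled with substantially new arguments.
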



The positive formula \eqref{qGUE Matching positive} is new to our knowledge. 
The alternating formula \eqref{spectral moment of qGUE eval signed} was shown in \cite{Ve14} and \cite[Eq.(2.39)]{FLSY23}. 
Then \eqref{qGUE Matching alternating} follows from 
\begin{equation}
\widehat{\mathfrak{m}}_{2p,j}^{ \rm (dH) }= \widehat{m}_{j+1,2p}^{ \rm (dH) }- \widehat{m}_{j,2p}^{ \rm (dH) }. 
\end{equation}
A linear recurrence for the spectral moment $\widehat{m}_{N,2p}^{ \rm (dH) }$ extending \eqref{HZ formula} has not been discovered for general $q \in (0,1]$. However it was found by Cohen \cite[Theorem 4.4]{Co21} that a certain weighted sum over $N$ of $\widehat{\mathfrak{m}}_{2p,j}^{ \rm (dH) }$ can be evaluated in terms of a particular $q$-Hahn polynomial, from which a three-term recurrence of such summed moments follows.
 
We mention that contrary to the $q=1$ case where one can apply Pascal's triangle identity consecutively, it seems not straightforward to simplify the summation
\begin{align*}
\sum_{j=0}^{N-1} q^{j(2p-l)} \discretebinom{j}{l}_q  = \sum_{j=0}^{N-1} q^{j(2p-l)}  \frac{(1-q^j)(1-q^{j-1}) \dots (1-q^{ j-l+1 } ) }{ (1-q)(1-q^2) \dots (1-q^l) }
\end{align*}
in the expression \eqref{spectral moment of qGUE eval}.

\begin{rem}[Other forms of alternating formulas]
In \cite[Proposition 5.3]{FLSY23}, the following alternative formula has also been derived  
\begin{align}
\begin{split}
\widehat{\mathfrak{m}}_{2p,j}^{ \rm (dH) } =  \sum_{k=0}^p \sum_{l=0}^k  (-1)^k q^{k^2+2kj+ l (4l-4k-2j-1) }  (1-q)^{2l-p} \discretebinom{p}{k}_{q^2}     \discretebinom{k}{k-l}_{q^2}^2   \frac{[j]_q !}{ [j-2l]_q ! }.
\end{split}
\end{align}
It was shown in \cite[Theorem 4.3]{Co21} that the spectral moment $\widehat{m}_{N,2p}^{ \rm (dH) }$ can be expressed in terms of the hypergeometric function as 
\begin{align}
\begin{split}
\widehat{m}_{N,2p}^{ \rm (dH) } &= [2p-1]_q !! \,q^{  -\frac12 (p^2+p) } \frac{ (-q;q)_p }{ (1-q^p) (q;q)_p }   {}_3\phi_2\left(\begin{matrix} -q^{p+1},  q^p , q^{-p} \smallskip \\ -q, q^{p+1} \end{matrix} \,;\, q, q\right) 
\\
&\quad - [2p-1]_q !!  \,q^{ N p -\frac12 (p^2+p) } \frac{   (-q;q)_p }{ (1-q^p) (q;q)_p }   {}_3\phi_2\left(\begin{matrix} -q^{p+1},  q^p , q^{-p} \smallskip \\ -q, q^{p+1} \end{matrix} \,;\, q, q^{N+1}\right).  
\end{split}
\end{align}
Then by \eqref{def of qHypergeometric}, we have 
\begin{align}
\widehat{m}_{N,2p}^{ \rm (dH) } &= [2p-1]_q !! \,q^{  -\frac12 (p^2+p) } \frac{ (-q;q)_p }{ (1-q^p) (q;q)_p }  
  \sum_{k=0}^p \frac{ (-q^{p+1};q)_k \, ( q^p,q)_k \, (q^{-p};q)_k }{ (-q;q)_k \, (q^{p+1};q)_k } \frac{q^k}{ (q;q)_k }  \Big(1  - q^{(k+p)N} \Big)  ,
  \\
\widehat{\mathfrak{m}}_{2p,j}^{ \rm (dH) }   &= [2p-1]_q !! \,q^{  -\frac12 (p^2+p) } \frac{ (-q;q)_p }{ (1-q^p) (q;q)_p }  
    \sum_{k=0}^p \frac{ (-q^{p+1};q)_k \, ( q^p,q)_k \, (q^{-p};q)_k }{ (-q;q)_k \, (q^{p+1};q)_k } \frac{q^k}{ (q;q)_k } q^{(k+p)j} (1-q^{k+p}) . 
\end{align} 
\end{rem}

\subsection{Scaled moments and density on the exponential lattices}

Following the scaling for the Stieltjes-Wigert random matrix ensemble \cite{Fo21}, we set
\begin{equation} \label{q lambda scaling}
q= e^{-\lambda/N}, \qquad \lambda \ge 0.  
\end{equation} 
Recall that the regularised incomplete beta function is defined by 
\begin{equation}
I_x(a,b)=  \frac{ \Gamma(a+b) }{ \Gamma(a)\Gamma(b) } \int_0^x t^{a-1} (1-t)^{b-1}\,dt.
\end{equation}

\begin{thm}[\textbf{Genus expansion of the spectral moments}] \label{Thm_genus one}
Let $q$ be scaled as in \eqref{q lambda scaling}. 
Then for all positive integers $p$ we have 
\begin{equation} \label{genus 1 exp in thm}
q^p m_{N,2p}^{ \rm (dH) } = \Big(q(1-q)\Big)^p \widehat{m}_{N,2p}^{ \rm (dH) }=   \mathcal{M}_{2p,0}^{ \rm (dH) } N+ \frac{ \mathcal{M}_{2p,1}^{ \rm (dH) } }{ N  } +O(\frac{1}{N^3}),
\end{equation}
as $N \to \infty$, where
\begin{align}
\mathcal{M}_{2p,0}^{ \rm (dH) } & = \frac{1}{\lambda \, p}\,   I_{1-e^{-\lambda}} (p+1,p), 
\label{genus 0 coeff smoments}
\\
\mathcal{M}_{2p,1}^{ \rm (dH) } &= -\frac{\lambda\, p}{6} \bigg(  I_{1-e^{-\lambda}}(p+1,p) + \frac{(2p-1)!}{p!(p-1)!} e^{-p\lambda}(1-e^{-\lambda})^{p-1} \Big( 2+p-(2p+1)e^{-\lambda} \Big)  \bigg)  .  \label{genus 1 coeff smoments}
\end{align} 
\end{thm}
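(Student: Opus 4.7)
The plan is to begin from the positive sum formula \eqref{spectral moment of qGUE eval}, set $q=e^{-\lambda/N}$, and perform a systematic large-$N$ expansion of the double sum. The key factorization is $(1-q)^l\binom{j}{l}_q = \frac{1}{[l]_q!}\prod_{i=0}^{l-1}(1-q^{j-i})$, which exposes a visible power $(1-q)^{p-l}\sim(\lambda/N)^{p-l}$ in the $l$-th slice of the $l$-sum; since the $j$-sum contributes at most an extra factor of $N$ (by Riemann-sum approximation), the $l$-slice is of order $O(N^{1-(p-l)})$. Consequently only $l=p,\,p-1,\,p-2$ can affect the coefficients of $N$ and $1/N$ in \eqref{genus 1 exp in thm}.

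For $l=p$, write the per-term as $F_{N,p}(j) = f_p(t) + N^{-1} g_p(t) + N^{-2} h_p(t) + O(N^{-3})$ with $t=j/N$, by Taylor-expanding the $q$-Pochhammer and $q$-factorial ingredients in $1/N$; the leading profile is $f_p(t)=\binom{2p}{p}e^{-\lambda tp}(1-e^{-\lambda t})^p$. The Euler--Maclaurin formula then gives
\[
\sum_{j=0}^{N-1} F_{N,p}(j) = N\int_0^1 f_p\,dt + \Bigl[\tfrac{f_p(0)-f_p(1)}{2} + \int_0^1 g_p\,dt\Bigr] + \tfrac{1}{N}\Bigl[\tfrac{f_p'(1)-f_p'(0)}{12} + \int_0^1 h_p\,dt + \tfrac{g_p(0)-g_p(1)}{2}\Bigr] + O(N^{-2}).
\]
The substitution $u=1-e^{-\lambda t}$ converts $\int_0^1 f_p\,dt$ to $\binom{2p}{p}\lambda^{-1}\int_0^{1-e^{-\lambda}} u^p(1-u)^{p-1}\,du = \frac{1}{\lambda p}I_{1-e^{-\lambda}}(p+1,p)$, yielding $\mathcal{M}_{2p,0}^{(dH)}N$ of \eqref{genus 0 coeff smoments}. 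Parallel expansions are performed for the $l=p-1$ and $l=p-2$ slices.

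At $O(1)$ the three contributions are the boundary $\tfrac12(f_p(0)-f_p(1))$, the integral $\int g_p$, and the leading $l=p-1$ Riemann sum $\tfrac{p}{2}I_{1-e^{-\lambda}}(p,p+1)$; each reduces under $u=1-e^{-\lambda t}$ to a linear combination of $I_{1-e^{-\lambda}}(p,p)$, $I_{1-e^{-\lambda}}(p,p+1)$, $I_{1-e^{-\lambda}}(p+1,p)$ and the boundary quantity $\binom{2p}{p}e^{-p\lambda}(1-e^{-\lambda})^p$. The incomplete beta identities
\[
I_x(p,p+1) - I_x(p+1,p) = \binom{2p}{p}x^p(1-x)^p, \qquad I_x(p,p) - I_x(p+1,p) = \tfrac12\binom{2p}{p}x^p(1-x)^p,
\]
then eliminate $I(p,p)$ and $I(p,p+1)$ in favor of $I(p+1,p)$ and the boundary, and the three contributions cancel to zero, confirming the absence of a half-genus correction. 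At $O(1/N)$, assembling the EM derivative term $\tfrac{1}{12}(f_p'(1)-f_p'(0))$, $\int h_p$, the EM boundary and $O(1/N)$ correction of the $l=p-1$ slice, and the leading $l=p-2$ Riemann integral, and applying the same two beta identities, collapses everything into $\mathcal{M}_{2p,1}^{(dH)}$ as stated. The principal obstacle is this final consolidation: a proliferation of incomplete-beta evaluations with shifted parameters must be reduced to the single term $I_{1-e^{-\lambda}}(p+1,p)$ together with the explicit polynomial boundary $e^{-p\lambda}(1-e^{-\lambda})^{p-1}(2+p-(2p+1)e^{-\lambda})$ appearing in \eqref{genus 1 coeff smoments}.
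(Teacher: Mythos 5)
Your proposal follows essentially the same route as the paper's proof: decompose the positive sum formula into $l$-slices (the paper's $F_N(l)$, shown to be $O(N^{l+1-p})$ so that only $l=p,\,p-1,\,p-2$ matter), expand each summand in $1/N$ under $q=e^{-\lambda/N}$, apply Euler--Maclaurin to the $j$-sum, and consolidate the resulting incomplete-beta expressions via contiguous identities to find that the constant term cancels and the $1/N$ term gives \eqref{genus 1 coeff smoments} --- the paper's Lemmas 4.1--4.3 merely separate the prefactor and $q$-binomial expansions that you fold into your $g_p,h_p$. The one point you leave untouched is the claimed $O(N^{-3})$ remainder, i.e.\ the vanishing of the $N^{-2}$ coefficient (to which the slices $l\le p-3$ also contribute), which the paper itself only asserts can be shown ``with the same strategy''.
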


\begin{figure}[t]
  	\begin{subfigure}{0.4\textwidth}
		\begin{center}	
			\includegraphics[width=\textwidth]{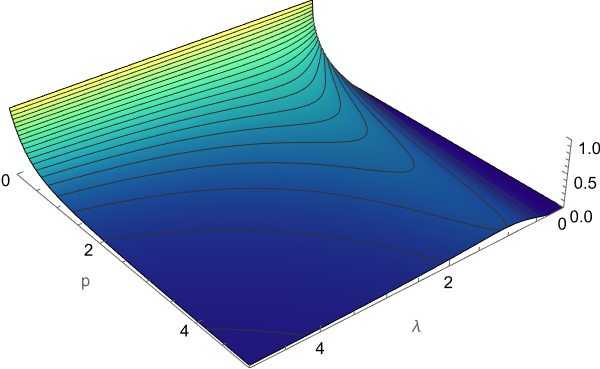}
		\end{center}
		\subcaption{$(p,\lambda) \to \mathcal{M}_{2p,0}^{ \rm (dH) }$}
	\end{subfigure}	
\qquad
	\begin{subfigure}{0.4\textwidth}
		\begin{center}	
			\includegraphics[width=\textwidth]{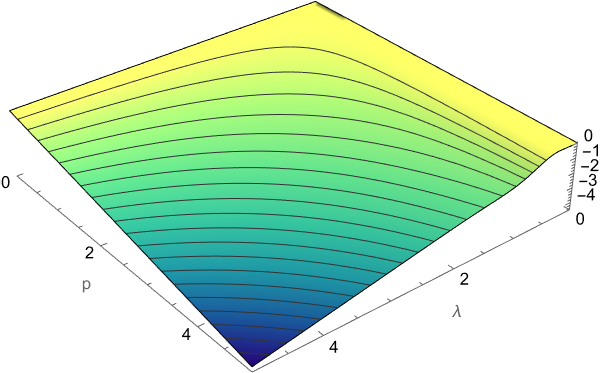}
		\end{center}
		\subcaption{$(p,\lambda) \to \mathcal{M}_{2p,1}^{ \rm (dH) }$}
	\end{subfigure}	 
 \smallskip 
 
  	\begin{subfigure}{0.4\textwidth}
		\begin{center}	
			\includegraphics[width=\textwidth]{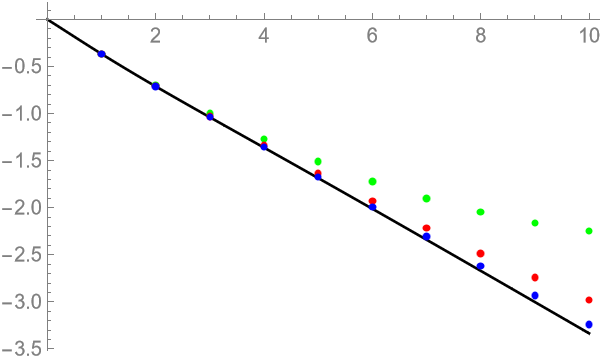}
		\end{center}
		\subcaption{$p \to \mathcal{M}_{2p,1}^{ \rm (dH) }$ with $\lambda=2$}
	\end{subfigure}	
\qquad
	\begin{subfigure}{0.4\textwidth}
		\begin{center}	
			\includegraphics[width=\textwidth]{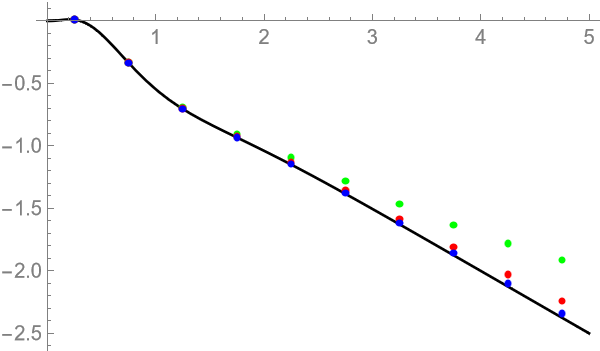}
		\end{center}
		\subcaption{$\lambda \to \mathcal{M}_{2p,1}^{ \rm (dH) }$ with $p=3$}
	\end{subfigure}	
    \caption{The plots (A) and (B) show the graphs of $\mathcal{M}_{2p,0}^{ \rm (dH) }$ and $\mathcal{M}_{2p,1}^{ \rm (dH) }$, respectively. The plot (C) shows graph of $p \to \mathcal{M}_{2p,1}^{ \rm (dH) }$ with $\lambda=2$ and its comparison with $( q^p m_{N,2p}^{ \rm (dH) } -  \mathcal{M}_{2p,0}^{ \rm (dH) } N)\, N$ for $N=10,20$, and $N=40$ (green, red and blue dots, respectively). The plot (D) shows a similar graph graph of $\lambda \to \mathcal{M}_{2p,1}^{ \rm (dH) }$ with $p=3$.}
    \label{Fig_genus1}
\end{figure}

\begin{rem}[Continuum limit $q\to 1$]
As a consequence of Theorem~\ref{Thm_genus one}, it follows that 
\begin{align}
\widehat{m}_{N,2p}^{ \rm (dH) } 
&=  \Big( \frac{N}{\lambda} \Big)^p  \bigg( \mathcal{M}_{2p,0}^{ \rm (dH) } N+   \frac{3p}{2} \lambda \, \mathcal{M}_{2p,0}^{ \rm (dH) } + \Big( \frac{(27p-1)p}{ 24 } \lambda^2\, \mathcal{M}_{2p,0}^{ \rm (dH) }   + \mathcal{M}_{2p,1}^{ \rm (dH) } \Big) \frac{1}{N} +O(\frac{1}{N^2}) \bigg). 
\end{align}
Then one can see that  
\begin{equation}
\lim_{ \lambda \to 0 } \frac{1}{\lambda^p}  \mathcal{M}_{2p,0}^{ \rm (dH) }  = \mathcal{E}_0(p)
\end{equation}
and 
\begin{equation}
\lim_{ \lambda \to 0 } \frac{1}{\lambda^p} \Big( \frac{(27p-1)p}{ 24 } \lambda^2\, \mathcal{M}_{2p,0}^{ \rm (dH) }   + \mathcal{M}_{2p,1}^{ \rm (dH) } \Big) = \frac{(2p-1)!}{ 6(p-1)! (p-2)! } = \mathcal{E}_1(p). 
\end{equation}
Thus the first two terms in the expansion (\ref{1.30}) are reclaimed.
\end{rem}

\begin{rem} 
In \cite[Eq.(2.45)]{FLSY23}, the leading scaled moment is also derived using the alternating formula, which takes the form
\begin{equation}
  \mathcal{M}_{2p,0}^{ \rm (dH) } = \frac{1}{\lambda} \bigg( \frac{1}{p} +(-1)^{p-1} \sum_{k=p}^{2p} \frac{2}{k} \frac{(2k-2)!! \,(2p-1)!! }{ (2k-2p)!! \, (k-1)!\, (2p-k)!  } e^{-\lambda k} \bigg). 
\end{equation}
Compared to this form, the formula \eqref{genus 0 coeff smoments} obtained from the positive formula \eqref{spectral moment of qGUE eval},
which can be viewed as a power series in $1 - e^{-\lambda}$, turns out to be more convenient for further analysis. 
\end{rem}

We now discuss the spectral density. 
Wigner's semi-circle law \cite{AGZ09} gives that the density $\rho_N^{ \rm (H) }$ in \eqref{GUE density CDI} satisfies  
\begin{equation}
\frac{1}{\sqrt{N}} \rho_N^{ \rm (H) }(\sqrt{N}x) \to \rho_{\rm sc}(x):=\frac{ \sqrt{4-x^2} }{2\pi} \, \mathbbm{1}_{(-2,2)}(x), \qquad N \to \infty.  
\end{equation}
On the other hand, it follows from (\ref{1.43}) and (\ref{1.55}) that the maximal support of $ \widehat{\rho}_N^{\rm (dH)}( \sqrt{N} x  )$ is 
\begin{equation}
|x| < \frac{1}{ \sqrt{N(1-q)} } = \frac{1}{\sqrt{\lambda}} +O(\frac1{N}). 
\end{equation}
Thus the limiting scaled density is supported on $|x|<1/\sqrt{\lambda}.$
Another preliminary point is that
the left-hand side of \eqref{genus 1 exp in thm} is the moment of 
\begin{equation}
\widetilde{\rho}_N^{ \rm (dH) }(x):= \sqrt{q} \, \rho_N^{ \rm (dH) }(\sqrt{q}x) = \sqrt{ \frac{q}{1-q} } \,\widehat{\rho}_N^{ \rm (dH) }\Big( \sqrt{ \frac{q}{1-q} } x \Big). 
\end{equation}

\begin{thm}[\textbf{Limiting spectral density and its finite-size correction}] 
\label{Thm_genus one density}
Let 
\begin{equation}
b(\lambda)= 2\sqrt{(1-e^{-\lambda}) e^{-\lambda} }. 
\end{equation}
Then as $N \to \infty$, in the sense of integration against a smooth function, we have
\begin{equation}
\frac{1}{N}  \widetilde{\rho}_N^{ \rm (dH) }(x) = \widetilde{\rho}_{(0)}^{ \rm (dH) }(x) + \frac{ \widetilde{\rho}_{(1)}^{ \rm (dH) }(x) }{ N^2 } + O( \frac{1}{N^4} ), \qquad |x|<1,  
\end{equation}
where 
\begin{align}\label{2.24}
\begin{split}
\widetilde{\rho}_{(0)}^{\rm (dH)}(x) & = \frac{1}{\pi \lambda x} \arg \bigg( \frac{ 2 e^{-\lambda} - x  +i \sqrt{  4(1-e^{-\lambda}) e^{-\lambda}- x^2 } }{  2 e^{-\lambda}+  x +i \sqrt{   4  (1-e^{-\lambda}) e^{-\lambda}- x^2 }  } \bigg)  \mathbbm{1}_{(-b(\lambda),b(\lambda))}(x)
\\
&\quad +
\begin{cases}
0 &\textup{if } \lambda \le \log 2,
\smallskip 
\\
\displaystyle 
\frac{1}{\lambda |x|} \Big( \mathbbm{1}_{ (-1,1) }(y) -\mathbbm{1}_{ (-b(\lambda),b(\lambda)) }(x) \Big) &\textup{if } \lambda > \log 2 
\end{cases}
\end{split}
\end{align}
and  
\begin{align}
\begin{split} \label{genus one density}
\widetilde{\rho}_{(1)}^{ \rm (dH) }(x) & = -\frac{1}{\pi} \,  \lambda \frac{ e^{-\lambda} (1-e^{-\lambda})}{2 }   (x^2-2e^{-\lambda} ) ( b(\lambda)^2-x^2 )^{-5/2}   \mathbbm{1}_{ (-b(\lambda),b(\lambda)) }(x)
\\
&\quad -\frac{1}{\pi} \frac{ \lambda }{ 3 }  \int_{ \tfrac12-x^* } ^{  \min\{ 1-e^{-\lambda}, \tfrac12+ x^*  \}  }   t \Big( x^2+2t(1-t) \Big) \Big(  4t (1-t)-x^2  \Big)^{-5/2}    \,dt .  
\end{split}
\end{align} 
Here, $x^*= \frac12 \sqrt{1-x^2}.$   
\end{thm}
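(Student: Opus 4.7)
Since $\widetilde{\rho}_N^{(\rm dH)}$ is supported in the fixed compact set $[-1,1]$, any bounded signed measure on this set is determined by its polynomial moments. As the $q$-integral of a polynomial against a compactly supported measure converges to the corresponding Lebesgue integral in the continuum limit $q = e^{-\lambda/N} \to 1$, the claimed distributional expansion $\tfrac{1}{N}\widetilde{\rho}_N^{(\rm dH)} = \widetilde{\rho}_{(0)}^{(\rm dH)} + \widetilde{\rho}_{(1)}^{(\rm dH)}/N^2 + O(1/N^4)$ against smooth test functions is equivalent to verifying, for every $p \ge 0$,
\[
\int_{-1}^1 x^{2p}\,\widetilde{\rho}_{(0)}^{(\rm dH)}(x)\,dx = \mathcal{M}_{2p,0}^{(\rm dH)}, \qquad \int_{-1}^1 x^{2p}\,\widetilde{\rho}_{(1)}^{(\rm dH)}(x)\,dx = \mathcal{M}_{2p,1}^{(\rm dH)},
\]
with right-hand sides provided by \eqref{genus 0 coeff smoments}--\eqref{genus 1 coeff smoments}. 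The non-integrable $(b(\lambda)^2 - x^2)^{-5/2}$ and $(4t(1-t) - x^2)^{-5/2}$ factors in \eqref{genus one density} are interpreted via Hadamard finite parts, which suffice when integrated against the smooth function $x^{2p}$.

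\textbf{Leading density.} Writing $W(x) := (2e^{-\lambda} - x + iz)/(2e^{-\lambda} + x + iz)$ with $z = \sqrt{b(\lambda)^2 - x^2}$, one checks $W(-x) = 1/W(x)$, so $\arg W$ is odd; for $\lambda < \log 2$ it also vanishes at the endpoints $\pm b(\lambda)$. Integration by parts then yields
\[
\int_{-b(\lambda)}^{b(\lambda)} x^{2p}\,\widetilde{\rho}_{(0)}^{(\rm dH)}(x)\,dx = -\frac{1}{\pi \lambda p}\int_{-b(\lambda)}^{b(\lambda)} x^{2p}\,(\arg W)'(x)\,dx,
\]
and a direct logarithmic-derivative calculation gives $(\arg W)'(x) = (2(1-e^{-\lambda}) - x^2)/[(1-x^2)\sqrt{b(\lambda)^2 - x^2}]$. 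The substitution $x = 2\sqrt{t(1-t)}$ --- under which $1 - x^2 = (1-2t)^2$ and $b(\lambda)^2 - x^2 = 4(e^{-\lambda} - t)(1-e^{-\lambda} - t)$ --- collapses the rational factors, leaving the Beta-type integrand $\frac{(2p)!}{p!(p-1)!}\,t^p (1-t)^{p-1}$ on $t \in (0, 1-e^{-\lambda})$, which reproduces $I_{1-e^{-\lambda}}(p+1, p)/(\lambda p) = \mathcal{M}_{2p,0}^{(\rm dH)}$. For $\lambda > \log 2$, the boundary values $\arg W(\pm b(\lambda)) = \pm\pi$ contribute a surface term to the integration by parts, which is exactly matched by the moment of the additional $1/(\lambda|x|)$ density on $(-1,-b(\lambda)) \cup (b(\lambda),1)$, while the $t$-range now passes beyond $t = 1/2$ so both branches of the substitution enter.

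\textbf{Subleading density and main obstacle.} The analogous argument for $\widetilde{\rho}_{(1)}^{(\rm dH)}$ treats the two pieces of \eqref{genus one density} separately. The first (boundary) piece, carrying a Hadamard-regularised $(b(\lambda)^2 - x^2)^{-5/2}$ singularity, is handled by two further integrations by parts; after the same change of variables $x = 2\sqrt{t(1-t)}$, it produces the ``polynomial boundary'' contribution $\frac{(2p-1)!}{p!(p-1)!}e^{-p\lambda}(1-e^{-\lambda})^{p-1}(2 + p - (2p+1)e^{-\lambda})$ present in $\mathcal{M}_{2p,1}^{(\rm dH)}$. The second is a double integral whose inner range $[\tfrac12 - x^*, \tfrac12 + x^*]$ is precisely the locus $4t(1-t) \ge x^2$; applying Fubini and performing the $x$-integral first (under the same substitution) yields the closed-form contribution $-\frac{\lambda p}{6} I_{1-e^{-\lambda}}(p+1, p)$, matching the first summand of $\mathcal{M}_{2p,1}^{(\rm dH)}$. \emph{The principal technical obstacle} is the outer truncation at $\min\{1-e^{-\lambda}, \tfrac12+x^*\}$ in \eqref{genus one density}, which introduces a case split at $\lambda = \log 2$ paralleling that of the leading density; verifying that the Hadamard boundary contributions and the Fubini double integral combine cleanly --- in both regimes $\lambda \lessgtr \log 2$, and without leaving residual terms --- is the central calculation.
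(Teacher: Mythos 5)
Your overall strategy---take the claimed densities as given and verify that their (Hadamard-regularised) even moments reproduce $\mathcal{M}_{2p,0}^{\rm (dH)}$ and $\mathcal{M}_{2p,1}^{\rm (dH)}$ from Theorem \ref{Thm_genus one}---is the reverse of what the paper does: the paper \emph{constructs} $\widetilde{\rho}_{(0)}^{\rm (dH)}$ and $\widetilde{\rho}_{(1)}^{\rm (dH)}$ from the moment data, by forming the resolvent $G(y)=\sum_p \mathcal{M}_{2p,0}^{\rm (dH)}y^{-2p-1}$ (and the analogous $\widetilde G_{(1)}$ for the correction), resumming in closed form via $\sum_p\binom{2p}{p}z^p=(1-4z)^{-1/2}$ and its derivative analogues, and then applying the Sokhotski--Plemelj formula; the moment identities you want are then automatic, and the only remaining work is the closed-form evaluation (including the $\lambda\lessgtr\log 2$ case split). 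Your reverse route is viable in principle, but as written it has a genuine gap precisely where the nontrivial computation sits. In the leading-order check, your formula for $(\arg W)'$ is correct, but the claim that the substitution $x=2\sqrt{t(1-t)}$ ``collapses the rational factors, leaving the Beta-type integrand $\frac{(2p)!}{p!(p-1)!}t^p(1-t)^{p-1}$'' is false: under that substitution one has $b(\lambda)^2-x^2=4(e^{-\lambda}-t)(1-e^{-\lambda}-t)$ and $1-x^2=(1-2t)^2$, so the integrand becomes $4^p t^{p-1/2}(1-t)^{p-1/2}\,[2(1-e^{-\lambda})-4t(1-t)]\big/\big[2(1-2t)\sqrt{(e^{-\lambda}-t)(1-e^{-\lambda}-t)}\big]$, which is nothing like $t^p(1-t)^{p-1}$. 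Indeed no pointwise collapse is possible, since the would-be density in $t$ would have to carry the $p$-dependent factor $\binom{2p}{p}4^{-p}$; that factor is the arcsine-moment constant $\frac1\pi\int_{-c}^{c}x^{2p}(c^2-x^2)^{-1/2}dx=\binom{2p}{p}(c/2)^{2p}$, which only appears if you first pass to the two-variable representation $\widetilde{\rho}_{(0)}^{\rm (dH)}(x)=\frac{1}{\pi\lambda x}\int \frac{1}{1-t}\big(\tfrac{4t(1-t)}{x^2}-1\big)^{-1/2}dt$ and apply Fubini---but deriving that representation from the stated $\arg$ formula is essentially the content of the paper's evaluation, which you have not supplied. (There is also a factor-of-two slip in your integration by parts: the correct identity is $-\frac{1}{2\pi\lambda p}\int_{-b}^{b}x^{2p}(\arg W)'\,dx$ plus boundary terms.)

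For the subleading term the situation is similar: the two resummation identities you need---that the Hadamard-regularised moments of $(b(\lambda)^2-x^2)^{-5/2}(x^2-2e^{-\lambda})$ produce exactly $\frac{(2p-1)!}{p!(p-1)!}e^{-p\lambda}(1-e^{-\lambda})^{p-1}\big(2+p-(2p+1)e^{-\lambda}\big)$, and that the Fubini computation of the double integral yields $\frac{p}{6}I_{1-e^{-\lambda}}(p+1,p)$---are precisely the closed-form sums the paper performs (in the opposite direction) when resumming $\widetilde G_{(1)}$, and you state them as expected outcomes rather than proving them; you yourself flag this matching, together with the truncation at $\min\{1-e^{-\lambda},\tfrac12+x^*\}$ and the $\lambda=\log 2$ case split, as ``the central calculation.'' Until those evaluations are actually carried out (or the paper's resolvent-resummation route is invoked, which makes them unnecessary), the proposal does not constitute a proof of the theorem.
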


As exhibited in Figure \ref{Fig_density_moment}, the graph of 
$\widetilde{\rho}_{(0)}^{\rm (dH)}(x)$ as implied by (\ref{2.24}) has a distinct shape depending on the parameter $\lambda$ being less or greater than $\log 2$.
We mention that such a phase transition in the spectral density for random matrix type models on a lattice has also been observed in earlier works; see e.g. \cite{Li12}.

The density $\widetilde{\rho}_{(0)}^{\rm (dH)}(x)$ is characterised as the probability density supported on $(-1,1)$ with even positive integer moments $ \mathcal{M}_{2p,0}^{ \rm (dH)}$. Thus
\begin{equation}\label{2.26}
\int |x|^{2p}  \widetilde{\rho}_{(0)}^{\rm (dH)}(x) \,dx =
\mathcal{M}_{2p,0}^{ \rm (dH)}.
\end{equation}
With $\mathcal{M}_{2p,0}^{ \rm (dH)}$ given by (\ref{genus 0 coeff smoments}), both sides are such that Carlson's theorem \cite[Section 5.81]{Ti39} hold, telling us that they are uniquely determined by their values at the non-negative integers. This allows the validity of (\ref{2.26}) to be extended to Re$(p) > - 1/2$.

\begin{figure}[b]
	\begin{subfigure}{0.3\textwidth}
		\begin{center}	
			\includegraphics[width=\textwidth]{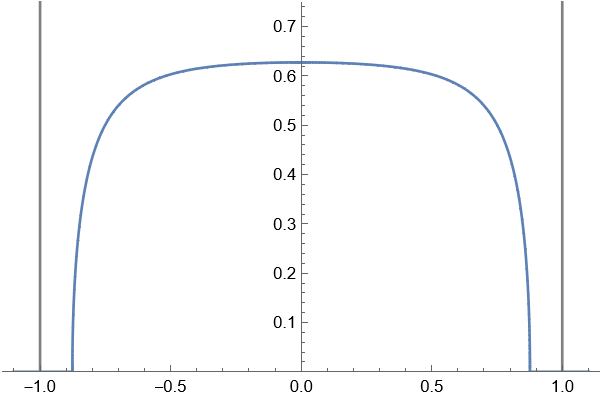}
		\end{center}
		\subcaption{$\lambda=0.3$}
	\end{subfigure}	
\quad
	\begin{subfigure}{0.3\textwidth}
		\begin{center}	
			\includegraphics[width=\textwidth]{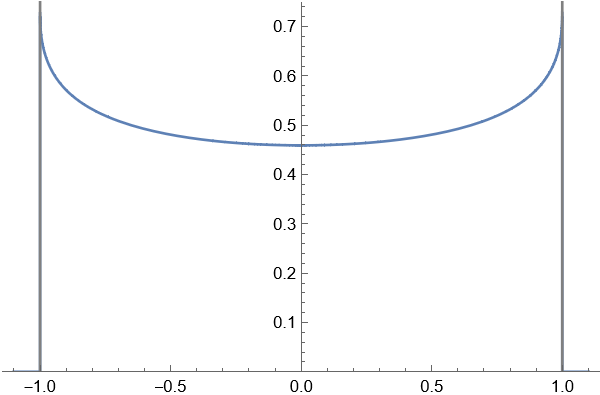}
		\end{center}
		\subcaption{$\lambda=\log 2$}
	\end{subfigure}	
 \quad
	\begin{subfigure}{0.3\textwidth}
		\begin{center}	
			\includegraphics[width=\textwidth]{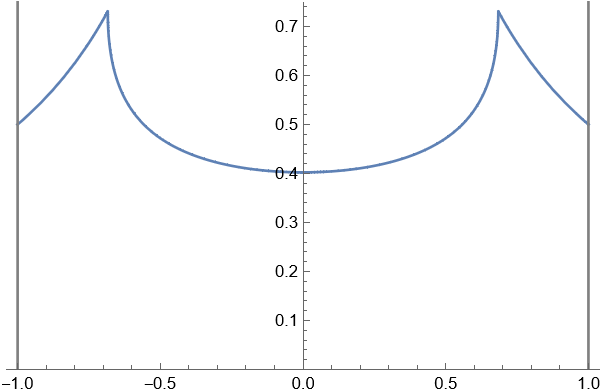}
		\end{center}
		\subcaption{$\lambda=2$}
	\end{subfigure}	

 	\begin{subfigure}{0.3\textwidth}
		\begin{center}	
			\includegraphics[width=\textwidth]{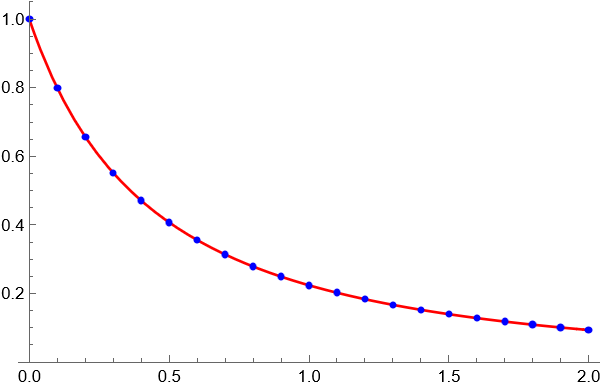}
		\end{center}
		\subcaption{$\lambda=0.3$}
	\end{subfigure}	
\quad
	\begin{subfigure}{0.3\textwidth}
		\begin{center}	
			\includegraphics[width=\textwidth]{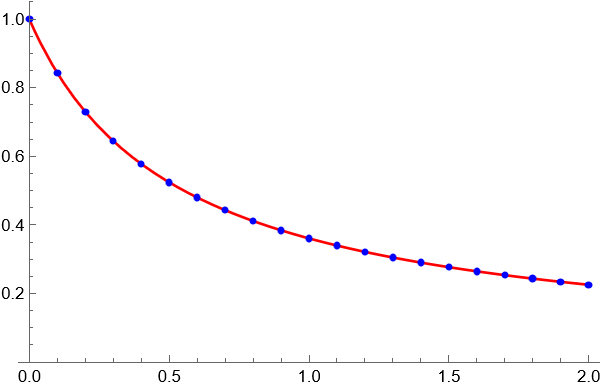}
		\end{center}
		\subcaption{$\lambda=\log 2$}
	\end{subfigure}	
 \quad
	\begin{subfigure}{0.3\textwidth}
		\begin{center}	
			\includegraphics[width=\textwidth]{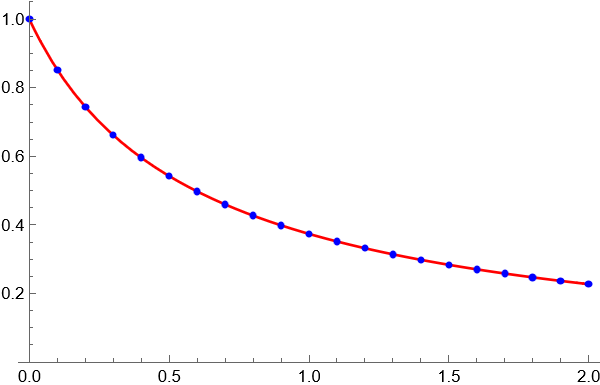}
		\end{center}
		\subcaption{$\lambda=2$}
	\end{subfigure}	
	\caption{(A)--(C) shows the spectral density $x \mapsto \widetilde{\rho}_{(0)}^{\rm (dH)}(x)$. Here the vertical line indicates the support $|x|=1$. (D)--(F) shows the graph $p \mapsto \mathcal{M}_{2p,0}^{ \rm (dH) } $ (red full line) and its comparison with $ p \mapsto  \int |x|^{2p}  \widetilde{\rho}_{(0)}^{\rm (dH)}(x) \,dx  $ (blue dots). 
 } \label{Fig_density_moment}
\end{figure}

\begin{rem}
For $\lambda<\log 2$, one can also write 
\begin{equation}
\widetilde{\rho}^{\rm (dH)}_{(0)}(x) = \frac{1}{\pi \lambda x} \bigg[ \arctan\bigg( \frac{  \sqrt{  4 e^{\lambda} -4- e^{2 \lambda} x^2 }  }{    2 - e^{\lambda} x  } \bigg) -  \arctan\bigg( \frac{  \sqrt{  4 e^{\lambda} -4- e^{2 \lambda} x^2 }  }{    2 + e^{\lambda} x   } \bigg)  \bigg] \mathbbm{1}_{(-b(\lambda),b(\lambda))}(x). 
\end{equation}
This form reveals a square-root decay at the edge, i.e. 
\begin{equation}
\widetilde{{\rho}}^{ \rm (dH) }_{(0)}(x)\asymp \sqrt{x-b(\lambda)}, \qquad x \to b(\lambda). 
\end{equation} 
\end{rem}

Due to non-integrable singularities, the integral of $\widetilde{{\rho}}_{(1)}^{ \rm (dH)} $ should be interpreted as a meromorphic continuation of a finite integral, referred to as the Hadamard regularised form; see \cite[Section III B]{WF14} for a  discussion in the context of the large $N$ expansion of the smoothed density for the Gaussian $\beta$ ensemble.

\begin{rem}[Continuum limit $q \to 1$]
One can observe that 
\begin{equation}
\sqrt{\lambda} \, \widetilde{\rho}_{(0)}( \sqrt{\lambda} x ) \to   \rho_{ \rm sc }(x), \qquad \lambda \to 0.  
\end{equation}
Therefore $ \widetilde{\rho}_{(0)} $ provides a $q$-extension of the Wigner semi-circle law. 
Note also that for $|x|<2$,
\begin{equation} \label{genus one density q1 limit}
\sqrt{\lambda} \, \widetilde{\rho}_{(1)}( \sqrt{\lambda} x ) \to    \frac{1}{\pi} (4-x^2)^{-5/2} \mathbbm{1}_{ (-2,2) }(x), \qquad \lambda \to 0. 
\end{equation}
Indeed, the first line in \eqref{genus one density} remains and the second line involving integral vanishes in the limit $\lambda \to 0$ for $|x|<2/\sqrt{\lambda}$.
The resulting $q \to 1$ limit \eqref{genus one density q1 limit} coincides with \cite[Eq.(3.18)]{WF14} with $h=0$ and $x$ restricted to $(-2,2)$. 
\end{rem}

\section{Combinatorial derivation of the spectral moments}

In this section, we review the \emph{Flajolet-Viennot theory}, a crucial combinatorial framework in this paper. Exploiting this theory, we advance towards computing spectral moments of the $q$-deformed GUE, establishing Theorem~\ref{Thm_qGUE moments}.

Let $\{ P_j \}_{ j \ge 0 }$ be the sequence of monic orthogonal polynomials satisfying  
\begin{equation}
\mathcal{L}( P_j(x) P_k(x) ) = h_j \, \delta_{jk}, 
\end{equation}
where $\mathcal{L}: \R[x] \to \R$ is the linear functional associated with the weighted Lebesgue measure $\omega(x)\,dx$, i.e. 
\begin{equation}
\mathcal{L}(P(x))= \int_\R P(x) \, \omega(x)\,dx. 
\end{equation} 
Note that $\mathfrak{m}_{p,j}$ in \eqref{spectral moment each term} can be written as 
\begin{equation}
\mathfrak{m}_{p,j} = \frac{  \mathcal{L}(  x^p \,P_j(x)^2) }{  \mathcal{L}( P_j(x)^2 )  } .
\end{equation}
In general, the family of orthogonal polynomials satisfies the three-term recurrence relation
\begin{equation}
P_{n+1}(x)=(x-b_n) P_n(x)-\lambda_n P_{n-1}(x). 
\end{equation}
Here, the orthogonal norm $h_j$ can be written in terms of the coefficients $\lambda_n$ as
\begin{equation} \label{hj lambdal}
h_j= \lambda_1 \dots \lambda_j. 
\end{equation}  

\subsection{Flajolet--Viennot theory}
Enumerating lattice paths is a classical subject in combinatorics, often serving as a combinatorial interpretation for various invariants, with moments of orthogonal polynomials being a prominent example. Flajolet \cite{Fl80} and Viennot \cite{Vi00} provided a classical combinatorial interpretation of moments using Motzkin paths. More recently, Corteel, Jonnadula, Keating and Kim \cite{CJKK23} offered another interpretation of moments using lecture hall graphs. In this section, we review the Flajolet-Viennot theory. For further exploration of this topic, we refer to \cite{CKS16} and references therein.

A (lattice) path $\omega$ is a sequence $\omega=(s_0,\dots,s_n)$, where $s_j=(x_j,y_j) \in \mathbb{N} \times \mathbb{N}.$
Each step $(s_j,s_{j+1})$ is called \emph{North-East} if $(x_{j+1},y_{j+1})=(x_j+1,y_j+1)$, \emph{East} if $(x_{j+1},y_{j+1})=(x_j+1,y_j)$, and \emph{South-East} if $(x_{j+1},y_{j+1})=(x_j+1,y_j-1)$.
We say a step $(s_j,s_{j+1})$ is of height $k$ if $y_j=k$.
\begin{defi}[\textbf{Motzkin path and weight}]
A \emph{Motzkin path} is a path consisting of North-East, East, and South-East steps which lie in the first quadrant. 
Let $\Mot_{n,k,l}$ be the set of Motzkin paths from $(0,k)$ to $(n,l).$  For given sequences $b=\{ b_n\}_{ n \ge 0 }$ and $\lambda=\{ \lambda_n \}_{ n \ge 0 }$, and a Motzkin path $\omega,$ we define the associated weight $\wt_{b,\lambda}(\omega)$ as a product of the weight of every step in $\omega$, where the weight for each step is given as 
\begin{itemize}
    \item A North-East step has weight $1$;
    \item An East step of height $k$ has weight $b_k$;
    \item A South-East step of height $k$ has weight $\lambda_k.$
\end{itemize}
\end{defi}
See Figure~\ref{Fig: Motzkin path} for an example of a Motzkin path of weight $b_0b_1b_3\lambda_1\lambda_2\lambda_3^2$. 
For a random matrix model associated with given orthogonal polynomials defined by the sequences $b_n$ and $\lambda_n$, let $\mathfrak{m}_{p,j}$ and ${m}_{N,2p}$ be the spectral moments. Then we have the following combinatorial interpretation of spectral moments in terms of Motzkin paths.

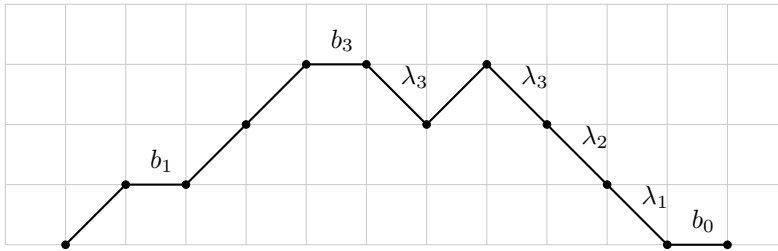
\begin{figure}[t]
    \begin{tikzpicture}[scale = 0.8]
      \tikzset{enclosed/.style={draw, circle, inner sep=0pt, minimum size=.1cm, fill=black}}
      \draw[black!20] (-1,0) grid (12,4);

      \node[enclosed] at (0,0) {};
      \node[enclosed] at (1,1) {};
      \node[enclosed] at (2,1) {};
      \node[enclosed] at (3,2) {};
      \node[enclosed] at (4,3) {};
      \node[enclosed] at (5,3) {};
      \node[enclosed] at (6,2) {};
      \node[enclosed] at (7,3) {};
      \node[enclosed] at (8,2) {};
      \node[enclosed] at (9,1) {};
      \node[enclosed] at (10,0) {};
      \node[enclosed] at (11,0) {};
      \draw[thick] (0,0) -- (1,1);
      \draw[thick] (1,1) -- (2,1);
      \draw[thick] (2,1) -- (3,2);
      \draw[thick] (3,2) -- (4,3);
      \draw[thick] (4,3) -- (5,3);
      \draw[thick] (5,3) -- (6,2);
      \draw[thick] (6,2) -- (7,3);
      \draw[thick] (7,3) -- (8,2);
      \draw[thick] (8,2) -- (9,1);
      \draw[thick] (9,1) -- (10,0);
      \draw[thick] (10,0) -- (11,0);    
      \node at (6-0.2,3-0.2) {\( \lambda_3 \)};
      \node at (8-0.2,3-0.2) {\( \lambda_3 \)};
      \node at (9-0.2,2-0.2) {\( \lambda_2 \)};
      \node at (10-0.2,1-0.2) {\( \lambda_1 \)};
      \node at (1+0.6,1+0.4) {\( b_1 \)};
      \node at (4+0.6,3+0.4) {\( b_3 \)};
      \node at (10+0.6,0.4) {\( b_0 \)};
    \end{tikzpicture}
    \caption{A Mozkin path with weight $b_0b_1b_3\lambda_1\lambda_2\lambda_3^2$}
    \label{Fig: Motzkin path}
  \end{figure}

\begin{prop}[\textbf{Combinatorial interpretation of the spectral moments}, cf. \cite{Vi00}] \label{Prop_comb rep spec moment}
Let $b,\lambda$,${\mathfrak{m}}_{p,j}$, and ${m}_{N,2p}$ as above. Then we have 
\begin{align}
\begin{split} 
{\mathfrak{m}}_{p,j} & =   \sum_{ \omega \in {\rm{Mot}}_{p,j,j}  } \wt_{b,\lambda}(\omega),
\end{split}
\\
{m}_{N,2p} &=\sum_{j=0}^{N-1} \sum_{ \omega \in {\rm{Mot}}_{p,j,j}  } \wt_{b,\lambda}(\omega) .   
\end{align} 
\end{prop}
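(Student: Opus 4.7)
The plan is to iterate the three-term recurrence and extract the relevant coefficient via orthogonality. Rewriting the recurrence as
\[
    x P_n(x) = P_{n+1}(x) + b_n P_n(x) + \lambda_n P_{n-1}(x),
\]
with the convention $P_{-1} \equiv 0$ (equivalently, $\lambda_0 P_{-1}$ contributes nothing), one reads off that multiplication by $x$ acts on the orthogonal basis $\{P_n\}$ as a single step of a lattice path based at height $n$: an up-step carries weight $1$, a level step carries weight $b_n$, and a down-step carries weight $\lambda_n$. These are exactly the weights in the definition of $\wt_{b,\lambda}$.

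Iterating $p$ applications of $x$ starting from $P_j$ distributes over all possible sequences of moves, yielding
\[
    x^p P_j(x) = \sum_{\omega} \wt_{b,\lambda}(\omega)\, P_{\mathrm{end}(\omega)}(x),
\]
where $\omega$ runs over lattice paths of length $p$ starting at height $j$ that never leave the first quadrant, and $\mathrm{end}(\omega)$ denotes the terminal height. Now expand $\mathfrak{m}_{p,j} = \mathcal{L}(x^p P_j^2)/h_j$: pairing the identity above with $P_j$ inside $\mathcal{L}$ and invoking orthogonality $\mathcal{L}(P_k P_j) = h_j \delta_{jk}$ kills every term except those with $\mathrm{end}(\omega) = j$. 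The factor $h_j$ then cancels the normalisation, giving
\[
    \mathfrak{m}_{p,j} = \sum_{\omega \in \Mot_{p,j,j}} \wt_{b,\lambda}(\omega),
\]
as claimed. The second identity of the proposition is just the sum over $j = 0, \dots, N-1$ of the first (with $p$ replaced by the relevant exponent), via the decomposition $m_{N,2p} = \sum_{j=0}^{N-1} \mathfrak{m}_{2p,j}$ from \eqref{spectral density gen}.

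The only step requiring care is the boundary at height $0$: one must check that lattice paths that would step below height $0$ contribute nothing, so that the surviving paths are genuine Motzkin paths. This is automatic from $P_{-1} = 0$, which forces the coefficient of any putative down-step from height $0$ to vanish; hence the expansion of $x^p P_j$ is supported on non-negative heights and coincides, term by term, with the weighted enumeration of Motzkin paths. I expect this boundary bookkeeping to be the only non-mechanical point in the argument; the rest is a transparent transfer-matrix reading of the recurrence.
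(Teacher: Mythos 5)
Your proposal is correct and is essentially the paper's approach: the paper simply invokes Viennot's identity $\mathcal{L}(x^p P_j P_l)=\lambda_1\cdots\lambda_l\sum_{\omega\in\Mot_{p,j,l}}\wt_{b,\lambda}(\omega)$ from \cite{Vi00} together with $h_j=\lambda_1\cdots\lambda_j$, and your transfer-matrix argument (iterate $xP_n=P_{n+1}+b_nP_n+\lambda_nP_{n-1}$, expand $x^pP_j$ over weighted paths, and kill all terms with $\mathrm{end}(\omega)\neq j$ by orthogonality, with $P_{-1}=0$ enforcing the height-$0$ boundary) is precisely the standard proof of the diagonal case of that cited identity, with the step-weight conventions matching the paper's Motzkin-path weights. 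Your handling of the second display, summing $\mathfrak{m}_{2p,j}$ over $j$ with the path length equal to the exponent, is also the intended reading.
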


\subsection{Spectral moments of the GUE revisited}\label{subsec: spectral moments of the GUE revisited}

Before going into the analysis of the spectral moments of the $q$-deformed GUE, it is worth reviewing the original GUE scenario (at $q=1$). The idea of using Flajolet--Viennot theory to derive the Harer-Zagier formula was originally proposed by Kerov \cite{Ke99}, employing concepts such as involutions and rook placements. While we follow Kerov's approach, we adopt the concept of `matchings' to characterize the spectral moments.

A \emph{matching} on $[n]$ is a set partition of $[n]$ such that each part has size 1 or 2. These parts are termed \emph{isolated vertices} for those containing one element and \emph{arcs} for those containing two elements. For an arc $(a,b)$ with $a<b$, the vertex $a$ is termed an \emph{opener}, and the vertex $b$ is termed a \emph{closer}. We denote the set of matchings on $[a]$ with $b$ arcs as $\Mat_{a,b}$. Typically, we represent a matching on $[n]$ as a diagram with $n$ vertices arranged horizontally, connected by arcs.

\begin{proof}[Proof of Proposition~\ref{Prop_GUE moments}]

We begin by revisiting a combinatorial interpretation of moments associated with Hermite polynomials, in terms of matching. Let $\Mat_{a,b}^{>c}\subseteq \Mat_{a,b}$ represent the set of matchings on $[a]$ with $b$ arcs, ensuring that the first $c$ vertices are either openers or isolated. Then we claim
\begin{equation}\label{eq: combinatorial interpretation of moments of Hermite}
    \mathfrak{m}_{2p,j}^{ \text{ (H) }}  = \left|\Mat_{2p+j,p}^{>j}\right|.
\end{equation}
This claim can be seen as follows. By Proposition~\ref{Prop_comb rep spec moment} along with the three-term recurrence \eqref{Hermite three term}, we obtain
\begin{equation} \label{Hermite Mot rep}
\mathfrak{m}^{\text{ (H)}}_{2p,j}  =  \sum_{ \omega \in \Mot_{2p,j,j}  } \wt_{b,\lambda}(\omega), \qquad (b_n=0,\quad \lambda_n=n).  
\end{equation}
Given $b_n=0$, our focus narrows down to the Motzkin paths without an East-step, so-called the \emph{Dyck paths}. We define a \emph{Hermite history} as a labeled Dyck path from $(0,j)$ to $(2p,j)$ where the labeling of its South-East step of height $k$ is labeled with an integer in $[k]$. 
An illustration of a Hermite history is presented on the left side of the bottom of Figure~\ref{Fig: Motzkin path and matching}. It is evident that for a Dyck path $\omega$, there are $\wt_{b,\lambda}(\omega)$-many \emph{Hermite histories} obtained by labeling South-East steps of $\omega$.

Subsequently, we establish a bijective correspondence between the set of Hermite histories from $(0,j)$ to $(2p,j)$ and $\Mat_{2p+j,p}^{>j}$, in the following dynamic manner.
\begin{itemize}
\item Start with $j$ vertices (depicted as blue dots in the right side of Figure~\ref{Fig: Motzkin path and matching}).
\smallskip
\item Progressing through each step of the Dyck path from $(0,j)$, a vertex is added (depicted as red dots in Figure~\ref{Fig: Motzkin path and matching}). Consequently, a total of $2p$ vertices are appended.
\smallskip
\item When the Hermite history advances with a South-East step labeled by $m$, we introduce a new arc connecting the newly added vertex and the $m$-th isolated point, counted from the left. If it advances with a North-East step, a positive vertex is added without any accompanying arc.
\end{itemize}
See Figure~\ref{Fig: Motzkin path and matching} for a visual representation of the above bijection.

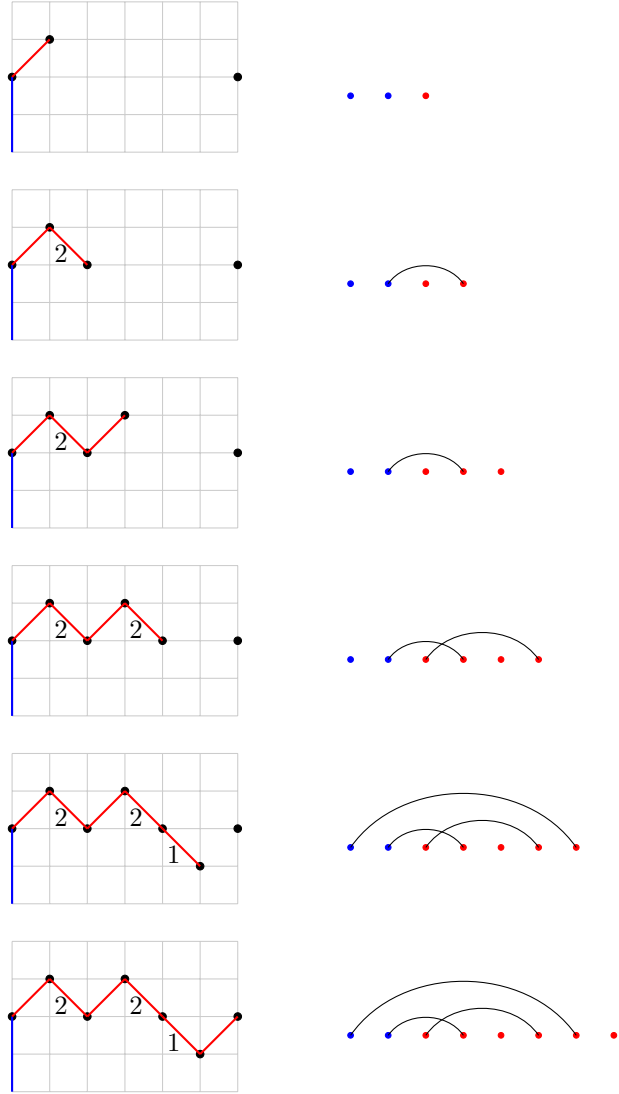
\begin{figure}[h!]    

\begin{center}
      \begin{tikzpicture}[scale = 0.5]
      \tikzset{enclosed/.style={draw, circle, inner sep=0pt, minimum size=.1cm, fill=black}}

   \draw[black!20] (0,25) grid (6,29);
      \node[enclosed] at (0,27) {};
        \node[enclosed] at (1,28) {};
     \node[enclosed] at (6,27) {}; 

      \draw[thick,blue] (0,25) -- (0,27);
      \draw[thick,red] (0,27) -- (1,28);
      

    \draw[black!20] (0,20) grid (6,24);
        \node[enclosed] at (0,22) {};
        \node[enclosed] at (1,23) {};
                \node[enclosed] at (2,22) {};
     \node[enclosed] at (6,22) {}; 

       \draw[thick,blue] (0,20) -- (0,22);
      \draw[thick,red] (0,22) -- (1,23);
           \draw[thick,red] (1,23) -- (2,22);
      
      \node at (1+0.3,22+0.3) {\( 2 \)};

    \draw[black!20] (0,15) grid (6,19);
      \node[enclosed] at (0,17) {};
        \node[enclosed] at (1,18) {};
                \node[enclosed] at (2,17) {};
                \node[enclosed] at (3,18) {};
     \node[enclosed] at (6,17) {}; 

      \draw[thick,blue] (0,15) -- (0,17);
      \draw[thick,red] (0,17) -- (1,18);
           \draw[thick,red] (1,18) -- (2,17);
                \draw[thick,red] (2,17) -- (3,18);
      
      \node at (1+0.3,17+0.3) {\( 2 \)};
  
     \draw[black!20] (0,10) grid (6,14);
      \node[enclosed] at (0,12) {};
        \node[enclosed] at (1,13) {};
                \node[enclosed] at (2,12) {};
                \node[enclosed] at (3,13) {};
                       \node[enclosed] at (4,12) {};
     \node[enclosed] at (6,12) {}; 

        \draw[thick,blue] (0,10) -- (0,12);
      \draw[thick,red] (0,12) -- (1,13);
           \draw[thick,red] (1,13) -- (2,12);
                \draw[thick,red] (2,12) -- (3,13);
                     \draw[thick,red] (3,13) -- (4,12);
      
      \node at (1+0.3,12+0.3) {\( 2 \)};
        \node at (3+0.3,12+0.3) {\( 2 \)};

      \draw[black!20] (0,5) grid (6,9);
      \node[enclosed] at (0,7) {};
        \node[enclosed] at (1,8) {};
                \node[enclosed] at (2,7) {};
                \node[enclosed] at (3,8) {};
                       \node[enclosed] at (4,7) {};
                       \node[enclosed] at (5,6) {};
     \node[enclosed] at (6,7) {}; 
      
         \draw[thick,blue] (0,5) -- (0,7);
      \draw[thick,red] (0,7) -- (1,8);
           \draw[thick,red] (1,8) -- (2,7);
                \draw[thick,red] (2,7) -- (3,8);
                     \draw[thick,red] (3,8) -- (4,7);
                          \draw[thick,red] (4,7) -- (5,6);
      \node at (1+0.3,7+0.3) {\( 2 \)};
        \node at (3+0.3,7+0.3) {\( 2 \)};
        \node at (4+0.3,6+0.3) {\( 1 \)};
        
      \draw[black!20] (0,0) grid (6,4);
      \node[enclosed] at (0,2) {};
        \node[enclosed] at (1,3) {};
                \node[enclosed] at (2,2) {};
                \node[enclosed] at (3,3) {};
                       \node[enclosed] at (4,2) {};
                       \node[enclosed] at (5,1) {};
     \node[enclosed] at (6,2) {}; 

        \draw[thick,blue] (0,0) -- (0,2);
      \draw[thick,red] (0,2) -- (1,3);
           \draw[thick,red] (1,3) -- (2,2);
                \draw[thick,red] (2,2) -- (3,3);
                     \draw[thick,red] (3,3) -- (4,2);
                          \draw[thick,red] (4,2) -- (5,1);
                               \draw[thick,red] (5,1) -- (6,2);
      \node at (1+0.3,2+0.3) {\( 2 \)};
        \node at (3+0.3,2+0.3) {\( 2 \)};
        \node at (4+0.3,1+0.3) {\( 1 \)};

       \draw [fill,blue] (9,26.5) circle [radius=0.075] ;
             \draw [fill,blue] (10,26.5) circle [radius=0.075] ;
              \draw [fill,red] (11,26.5) circle [radius=0.075] ;
               

      \draw [fill,blue] (9,21.5) circle [radius=0.075] ;
             \draw [fill,blue] (10,21.5) circle [radius=0.075] ;
              \draw [fill,red] (11,21.5) circle [radius=0.075] ;
               \draw [fill,red] (12,21.5) circle [radius=0.075] ;
               
                      \draw (10,21.5) to [out=55,in=125] (12,21.5);
                     

      \draw [fill,blue] (9,16.5) circle [radius=0.075] ;
             \draw [fill,blue] (10,16.5) circle [radius=0.075] ;
              \draw [fill,red] (11,16.5) circle [radius=0.075] ;
               \draw [fill,red] (12,16.5) circle [radius=0.075] ;
                \draw [fill,red] (13,16.5) circle [radius=0.075] ;
               
                      \draw (10,16.5) to [out=55,in=125] (12,16.5);

      \draw [fill,blue] (9,11.5) circle [radius=0.075] ;
             \draw [fill,blue] (10,11.5) circle [radius=0.075] ;
              \draw [fill,red] (11,11.5) circle [radius=0.075] ;
               \draw [fill,red] (12,11.5) circle [radius=0.075] ;
                \draw [fill,red] (13,11.5) circle [radius=0.075] ;
                 \draw [fill,red] (14,11.5) circle [radius=0.075] ;
               
                      \draw (10,11.5) to [out=55,in=125] (12,11.5);
                      \draw (11,11.5) to [out=55,in=125] (14,11.5);

       \draw [fill,blue] (9,6.5) circle [radius=0.075] ;
             \draw [fill,blue] (10,6.5) circle [radius=0.075] ;
              \draw [fill,red] (11,6.5) circle [radius=0.075] ;
               \draw [fill,red] (12,6.5) circle [radius=0.075] ;
                \draw [fill,red] (13,6.5) circle [radius=0.075] ;
                 \draw [fill,red] (14,6.5) circle [radius=0.075] ;
                  \draw [fill,red] (15,6.5) circle [radius=0.075] ;
               
                    \draw (9,6.5) to [out=55,in=125] (15,6.5);
                      \draw (10,6.5) to [out=55,in=125] (12,6.5);
                      \draw (11,6.5) to [out=55,in=125] (14,6.5);
      
          \draw [fill,blue] (9,1.5) circle [radius=0.075] ;
             \draw [fill,blue] (10,1.5) circle [radius=0.075] ;
              \draw [fill,red] (11,1.5) circle [radius=0.075] ;
               \draw [fill,red] (12,1.5) circle [radius=0.075] ;
                \draw [fill,red] (13,1.5) circle [radius=0.075] ;
                 \draw [fill,red] (14,1.5) circle [radius=0.075] ;
                  \draw [fill,red] (15,1.5) circle [radius=0.075] ;
                   \draw [fill,red] (16,1.5) circle [radius=0.075] ;
               
                    \draw (9,1.5) to [out=55,in=125] (15,1.5);
                      \draw (10,1.5) to [out=55,in=125] (12,1.5);
                      \draw (11,1.5) to [out=55,in=125] (14,1.5);

    \end{tikzpicture}
\end{center}
    \caption{A Hermite history from $(0,2)$ to $(6,2)$ its corresponding matching}
    \label{Fig: Motzkin path and matching}
  \end{figure}

\medskip 

For the remainder of the proof, we enumerate $|\Mat_{2p+j,p}^{>j}|$ in two distinct ways, leading to \eqref{GUE Matching positive} and \eqref{GUE Matching alternating}, respectively.

\medskip 

We first show \eqref{GUE Matching positive}. Let $\Mat_{2p+j,p}^{>j}(i)$ be the set of matchings in $\Mat_{2p+j,p}^{>j}$ such that there are $i$ arcs connecting to the first $j$ vertices. Or equivalently, $j-i$ vertices among the first $j$ vertices are isolated. We claim that 
\begin{equation}\label{eq: refined Hermite count}
    \left|\Mat_{2p+j,p}^{>j}(i)\right| = (2p-1)!! \binom{j}{i}  \binom{p}{i} 2^{i},
\end{equation}
thereby completing the proof. To that end, we construct matchings in $\Mat_{2p+j,p}^{>j}(i)$ step by step as follows.
\begin{itemize}
    \item Firstly, let us consider a matching in $\Mat_{2p-i,p-i}$. The count of such matchings is given by
    \[
    \binom{2p-i}{i} (2p-2i-1)!!.
    \]
    Here, we select $2p-2i$ vertices out of $2p-i$ available vertices and form perfect matchings with $p-i$ arcs on the chosen vertices.
    \smallskip
    \item For each matching $M\in\Mat_{2p-i,p-i}$, we introduce $2i$ new vertices, with the first $i$ vertices placed in the leftmost part. Subsequently, we create $i$ arcs among newly introduced $2i$ vertices, connecting each of the first $i$ vertices with the remaining $i$ vertices. The total count of such configurations is given by
    \[
    \binom{2p}{i} i! = \dfrac{(2p)!}{(2p-i)!}.
    \]
    \item Finally, we introduce $j-i$ isolated vertices in between the first $i$ vertices so that in the resulting matching, newly added vertices are in the first $j$ vertices. The number of ways to achieve this is given by $\binom{j}{i}$.
    \end{itemize}
Figure~\ref{fig:constructing a matching} provides an illustrative example of constructing a matching in $\Mat_{9,3}^{>2}(1)$ following the outlined steps. In the initial figure on the left, a matching in $\Mat_{5,4}$ is presented. Subsequently, in the middle figure, a vertex is added to the leftmost part, followed by the addition of another vertex which are then connected by a new arc, resulting in a matching in $\Mat_{9,3}^{>1}(1)$. Finally, another vertex is added, yielding a matching in $\Mat_{9,3}^{>2}(1)$. By counting the number of ways in each step, we have
\begin{align*}
\left|\Mat_{2p+j,p}^{>j}(i)\right| & = \binom{j}{i} \frac{(2p)!}{(2p-i)!} \binom{2p-i}{2i} (2p-2i-1)!!
= (2p-1)!! \binom{j}{i}  \binom{p}{i} 2^{i}.
\end{align*}
This proves the claim, thereby concluding \eqref{GUE Matching positive}.

\begin{figure}[h]
    \centering
\begin{tikzpicture}[scale = 0.5]
      \tikzset{enclosed/.style={draw, circle, inner sep=0pt, minimum size=.1cm, fill=black}}

    \draw [fill,red] (0,0) circle [radius=0.075] ;
    \draw [fill,red] (1,0) circle [radius=0.075] ;
    \draw [fill,red] (2,0) circle [radius=0.075] ;
    \draw [fill,red] (3,0) circle [radius=0.075] ;
    \draw [fill,red] (4,0) circle [radius=0.075] ;
    \draw [red] (0,0) to [out=55,in=125] (3,0);
    \draw [red] (1,0) to [out=55,in=125] (4,0);

    \draw [fill,red] (6,0) circle [radius=0.075] ;
    \draw [fill] (7,0) circle [radius=0.075] ;
    \draw [fill] (8,0) circle [radius=0.075] ;
    \draw [fill] (9,0) circle [radius=0.075] ;
    \draw [fill] (10,0) circle [radius=0.075] ;
    \draw [fill,red] (11,0) circle [radius=0.075] ;
    \draw [fill] (12,0) circle [radius=0.075] ;
    \draw [] (7,0) to [out=55,in=125] (10,0);
    \draw [] (8,0) to [out=55,in=125] (12,0);
    \draw [red] (6,0) to [out=55,in=125] (11,0);

    \draw [fill] (14,0) circle [radius=0.075] ;
    \draw [fill,red] (15,0) circle [radius=0.075] ;
    \draw [fill] (16,0) circle [radius=0.075] ;
    \draw [fill] (17,0) circle [radius=0.075] ;
    \draw [fill] (18,0) circle [radius=0.075] ;
    \draw [fill] (19,0) circle [radius=0.075] ;
    \draw [fill] (20,0) circle [radius=0.075] ;
    \draw [fill] (21,0) circle [radius=0.075] ;
    \draw [] (16,0) to [out=55,in=125] (19,0);
    \draw [] (17,0) to [out=55,in=125] (21,0);
    \draw [] (14,0) to [out=55,in=125] (20,0);

\end{tikzpicture}
    \caption{Constructing a matching in $\Mat_{9,3}^{>2}(1)$ step by step, newly added vertices and arcs in each step are colored in red.}
    \label{fig:constructing a matching}
\end{figure}
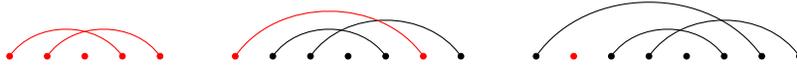

    

\medskip 

Next, we show \eqref{GUE Matching alternating}. For an integer $0\leq r < p$ and $j$, we enumerate the number $m_{j+2p,p,r}$ of matchings on $j+2p$ vertices with $p$ arcs, including additional markings on $r$ arcs connecting negative vertices.

\begin{itemize}
    \item Firstly, we select $2r$ vertices from the set of $j$ negative vertices and connect them with $r$ arcs.
    \smallskip
    \item Subsequently, we choose $2p-2r$ vertices from the remaining $2p+j-2r$ vertices and connect them with $p-r$ arcs.
\end{itemize}

This procedural gives
\begin{equation}
    m_{j+2p,p,r} = \binom{j}{2r} (2r-1)!! \binom{j+2p-2r}{2p-2r} (2p-2r-1)!! = (2p-1)!! \binom{j+2p-2r}{2p} \binom{p}{r}.
\end{equation}
Let $m_{j+2p,p,r}^{i}$ denote the number of matchings on $j+2p$ with $p$ arcs, where exactly $i$ arcs connect negative vertices, and $r$ of them are marked. We then establish the following relationship:
\begin{align}
\begin{split}
    \sum_{r=0}^p (-1)^r m_{j+2p,p,r} &= (2p-1)!! \sum_{r=0}^p \binom{j+2p-2r}{2p} \binom{p}{r} \\
    &= \sum_{r=0}^p \sum_{i\geq 0} (-1)^r m_{j+2p,p,r}^{i} \binom{i}{r}.
\end{split}
\end{align}
The identity \(\sum_{r=0}^p (-1)^r \binom{i}{r} = \delta_{i,0}\) implies \eqref{GUE Matching alternating}.
\end{proof}

\begin{rem}
The idea can be further extended to evaluate the case $j \not = l$ in general. 
Then it follows that 
\begin{align}
\frac{1}{(j+2m)!} \,  \mathcal{L}\Big( x^{2p} He_j(x) He_{j+2m}(x)  \Big)  & =   \sum_{l=0}^{p-m} \binom{j}{l}  \frac{(2p)!}{ (l+2m)! (p-l-m)! 2^{p-l-m}  },
\\
\frac{1}{(j+2m+1)!} \,  \mathcal{L}\Big( x^{2p+1} He_{j}(x) He_{j+2m+1} (x)  \Big) &=   \sum_{l=0}^{p-m} \binom{j}{l}  \frac{(2p+1)!}{ (l+2m+1)! (p-l-m)! 2^{p-l-m}  }.
\end{align}
See \cite[Proposition 13]{Fo21a} and references therein for an extensive earlier literature on such evaluations. 
\end{rem}

\subsection{Proof of Theorem~\ref{Thm_qGUE moments}}

We will prove Theorem~\ref{Thm_qGUE moments} (i) using the path counting method. We first describe the combinatorics of moments of $q$-Hermite polynomials. Let us recall some statistics for matchings. A \emph{crossing} is either
\begin{enumerate}
    \item a pair of arcs $(a,b)$ and $(c,d)$ with $a<c<b<d$, or 
    \smallskip 
    \item a pair consisting of an arc $(a,b)$ and an isolated vertex $c$ with $a<c<b$. 
\end{enumerate}
For a matching $M$, we denote the total number of crossings by $\cros(M)$. Pictorially, the first two diagrams in Figure~\ref{fig:crossings and nestings} illustrate two possible cases of crossings. For the second case, one might conceptualize an isolated vertex as connected to an imaginary arc reaching `infinity', thus forming a `crossing'.
A \emph{nesting} is either
\begin{enumerate}
    \item a pair of arcs $(a,b)$ and $(c,d)$ with $a<c<d<b$ or $c<a<b<d$, or
      \smallskip 
    \item a pair consisting of an arc $(a,b)$ and an isolated vertex $c$ with $c<a<b$. 
\end{enumerate} 
For a matching $M$, we denote the total number of nestings by $\nest(M)$. Pictorially, the last two diagrams in Figure~\ref{fig:crossings and nestings} illustrate two possible cases of nestings. For the second case, one might again think of an isolated vertex as connected to an imaginary arc extending to `infinity', thus forming a `nesting'.

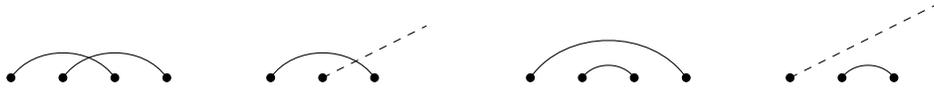
\begin{figure}[h]
    \centering
\begin{tikzpicture}[scale = 0.6905]
      \tikzset{enclosed/.style={draw, circle, inner sep=0pt, minimum size=.1cm, fill=black}}

    \draw [fill] (0,0) circle [radius=0.075] ;
    \draw [fill] (1,0) circle [radius=0.075] ;
    \draw [fill] (2,0) circle [radius=0.075] ;
    \draw [fill] (3,0) circle [radius=0.075] ;
    \draw (1,0) to [out=55,in=125] (3,0);
    \draw (0,0) to [out=55,in=125] (2,0);

    \draw [fill] (5,0) circle [radius=0.075] ;
    \draw [fill] (6,0) circle [radius=0.075] ;
    \draw [fill] (7,0) circle [radius=0.075] ;
    \draw (5,0) to [out=55,in=125] (7,0);
    \draw[dashed] (6,0) to (8,1);

    \draw [fill] (10,0) circle [radius=0.075] ;
    \draw [fill] (11,0) circle [radius=0.075] ;
    \draw [fill] (12,0) circle [radius=0.075] ;
    \draw [fill] (13,0) circle [radius=0.075] ;
    \draw (11,0) to [out=55,in=125] (12,0);
    \draw (10,0) to [out=55,in=125] (13,0);

    \draw [fill] (15,0) circle [radius=0.075] ;
    \draw [fill] (16,0) circle [radius=0.075] ;
    \draw [fill] (17,0) circle [radius=0.075] ;
    \draw (16,0) to [out=55,in=125] (17,0);
    \draw[dashed] (15,0) to (18,1.5);

\end{tikzpicture}
    \caption{Crossings and nestings}
    \label{fig:crossings and nestings}
\end{figure}
As a $q$-analogue of \eqref{eq: combinatorial interpretation of moments of Hermite}, the moments of $q$-Hermite polynomials, defined by the three-term recurrence in \eqref{3-term recurrence hat H}, admit a combinatorial interpretation in terms of matchings, crossings, and nestings as follows. 

\begin{prop}\label{prop: qGUE sum of matchings} 
Let $p,j$ be nonnegative integers. Then we have
\begin{equation}
\widehat{\mathfrak{m}}_{2p,j}^{ \rm (H) } = \sum_{\Mat_{2p+j,p}^{>j}} q^{\stat(M)},
\end{equation}
where $\stat(M)=\cros(M)+2\nest(M).$
\end{prop}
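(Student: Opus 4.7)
The plan is to lift the Flajolet--Viennot/Kerov argument used for the GUE in the proof of Proposition~\ref{Prop_GUE moments} to the $q$-deformed setting. The only new ingredient is a careful tracking of $q$-weights attached to South-East steps.

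First, applying Proposition~\ref{Prop_comb rep spec moment} to the recurrence \eqref{3-term recurrence hat H}, in which $b_n = 0$ and $\lambda_n = q^{n-1}[n]_q$, and noting that $b_n = 0$ forces all Motzkin paths to be Dyck paths (no East steps), we obtain
\[ \widehat{\mathfrak{m}}_{2p, j}^{\rm (H)} = \sum_{\omega} \prod_{\text{SE step at height } k \text{ of } \omega} q^{k-1}[k]_q, \]
the sum being over Dyck paths from $(0, j)$ to $(2p, j)$. Expanding $q^{k-1}[k]_q = \sum_{m=1}^{k} q^{k+m-2}$, I refine each SE step at height $k$ with an additional label $m \in \{1, \ldots, k\}$ carrying weight $q^{k+m-2}$, and call the resulting labeled Dyck path a $q$-Hermite history. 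I then apply verbatim the bijection used in the proof of Proposition~\ref{Prop_GUE moments}, which sends a SE step at height $k$ with label $m$ to the closing of the $m$-th leftmost currently-unpaired vertex against the newly-added vertex. This gives a weight-preserving bijection between $q$-Hermite histories and $\Mat_{2p+j, p}^{>j}$.

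The core step is to show that the total $q$-weight of a history equals $q^{\sigma(M)}$ where $M$ is the image matching and $\sigma(M) = \cros(M) + 2\nest(M)$, or equivalently that
\[ \sigma(M) = \sum_{\text{SE step at height } k \text{ with label } m} (k + m - 2). \]
My plan is to charge each contributing pair of $\sigma(M)$ to a single SE step and to compute the total charge at each SE step. The key observation is this: at the SE step creating $(a, b)$ with $a$ the $m$-th leftmost currently-unpaired vertex, the other $k - 1$ currently-unpaired vertices split into $m - 1$ vertices $p_j < a$ and $k - m$ vertices with $a < p_j < b$. For each such $p_j$, its eventual fate in $M$ is either to remain isolated or to be paired with some future red vertex $e > b$. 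In either case, the pair $\{(a, b), \text{fate of } p_j\}$ contributes exactly $2$ when $p_j < a$ (both the isolated configuration $p_j < a < b$ and the arc configuration $p_j < a < b < e$ are nestings) and exactly $1$ when $a < p_j < b$ (both configurations are crossings). Summing, the local charge is $2(m - 1) + (k - m) = k + m - 2$.

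The main obstacle is verifying that this charging covers every contributing pair exactly once. A case analysis should show: an arc-arc pair $\{(c, d), (a, b)\}$ with $c < a$ and $d < b$ is charged at the earlier step creating $(c, d)$ via $a$ being currently unpaired (provided $a < d$; if $a > d$ then $c < d < a < b$ is disjoint and contributes nothing); an arc-arc pair with $c < a$ and $d > b$ is charged at the step creating $(a, b)$ via $c$ being currently unpaired (automatically so since $c < a < b$); an isolated-arc pair $\{x, (a, b)\}$ with $x$ a vertex appearing before $b$ is charged at the step creating $(a, b)$ via $x$ being currently unpaired; and any remaining pair is disjoint, contributing $0$. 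The nontrivial input is the invariance that the contribution of $\{(a, b), p_j\}$ to $\sigma$ depends only on the sign of $p_j - a$, not on whether $p_j$ ultimately stays isolated or is later paired; this is what makes the local charge $k + m - 2$ clean and independent of the future behavior of the path, and allows the summation over SE steps to yield $\sigma(M)$.
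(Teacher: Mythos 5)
Your proposal is correct and follows essentially the same route as the paper: reduce to weighted Dyck paths via Proposition~\ref{Prop_comb rep spec moment} with $b_n=0$, $\lambda_n=q^{n-1}[n]_q$, reuse the GUE bijection with labels $m\in\{1,\dots,k\}$ on South-East steps, and show that the arc created at such a step contributes exactly $k+m-2=(k-1)+(m-1)$ to $\cros(M)+2\nest(M)$, which is precisely the paper's per-arc statistic $\stat_\alpha(M)$ and its identity $\stat(M)=\sum_{\alpha\in M}\stat_\alpha(M)$. Your explicit verification that each crossing/nesting pair is charged exactly once (at the arc with the smaller closer) just spells out what the paper treats as the "straightforward to observe" decomposition, so the two arguments coincide.
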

\begin{proof}

To analyze the statistic $\stat(M)$, let us begin with the following observation: for a matching $M$ and an arc $\alpha$ of $M$, let $\stat_\alpha(M)$ be defined as
\[
    \stat_\alpha(M):= C_1 + C_2 + 2(N_1 + N_2),
\]
where $C_1$ is the number of arcs that form a crossing with $\alpha$ whose opener is between the opener and closer of $\alpha$, $C_2$ is the number of isolated vertices that form a crossing with $\alpha$ which are less than the opener of $\alpha$, $N_1$ is the number of arcs that form a nesting with $\alpha$ whose opener is less than the opener of $\alpha$, and $N_2$ is the number of isolated vertices which are less than the opener of $\alpha$. Then, it is straightforward to observe that 
\begin{equation}\label{eq: observation}
    \stat(M) = \sum_{\alpha\in M}\stat_{\alpha}(M).
\end{equation}

Recalling the proof of Proposition~\ref{Prop_GUE moments}, we established a dynamic bijection between Hermite histories and matchings. This bijection starts with $j$ vertices and introduces a new vertex and an arc to the $m$-th isolated vertex for a South-East step labeled by $m$, or simply introduces a new vertex for a North-East step. Following this bijection, let us consider the case encountering a South-East step of height $k$ labeled by $m$. The corresponding matching can be described as follows: for a given matching with $k$ isolated vertices, we introduce a new vertex and connect it with the $m$-th isolated vertex. Let $\alpha$ denote this additional arc. After the final stage of this bijection to obtain a matching $M$, we have $\stat_{\alpha}(M)={(k-1)+(m-1)}$. By considering all $k$ possible labels (1 through $m$) to obtain a new arc $\alpha$, we have the additional statistic $q^{k-1}[k]_q$, which corresponds to the weight $\lambda_k$. When we encounter a North-East step, the weight is 1, and on the matching side, introducing a new vertex contributes 1 to $q^{\stat(M)}$. For both cases, the weight of the Hermite history for each step is recorded by $\stat_\alpha(M)$. This, with \eqref{eq: observation} completes the proof of the claim.
\end{proof}

\begin{rem} For the other version of $q$-Hermite polynomial $\widetilde{H}_n(x;q)$, given in \eqref{3-term recurrence tilde H}, we have
\begin{equation}
\widetilde{\mathfrak{m}}_{2p,j}^{ \rm (H) } = \sum_{\Mat_{2p+j,p}^{>j}} q^{\cros(M)}.
\end{equation}    
\end{rem}

We are now prepared to establish one of the key theorems in this paper, Theorem~\ref{Thm_qGUE moments} (i), which provides a positive formula for the spectral moments of the $q$-deformed GUE.

\begin{proof}[Proof of Theorem~\ref{Thm_qGUE moments}~\textup{(i)}]
According to Proposition~\ref{prop: qGUE sum of matchings}, it is sufficient to establish the following equality:
\begin{align} \label{eq: positive formula for L(x^2p H_j^2)}
\sum_{M\in\Mat_{2p+j,p}^{>j}} q^{\cros(M)+2\nest(M)}
= \sum_{i\ge 0} q^{(j-i)(2p-i)+i(i-1)/2}\dfrac{[2p]_q!}{[2p-2i]_q!![i]_q!}\discretebinom{j}{i}_q.
\end{align}
To achieve this, we extend the idea of proof employed in Proposition~\ref{Prop_GUE moments}, the $q=1$ case of Theorem~\ref{Thm_qGUE moments} given in Section~\ref{subsec: spectral moments of the GUE revisited}.

We first refine \eqref{eq: positive formula for L(x^2p H_j^2)} as follows:
\begin{equation}\label{eq: refined identity Mat>j(i)}
    \sum_{M\in\Mat_{2p+j,p}^{>j}(i)} q^{\cros(M)+2\nest(M)} = q^{(j-i)(2p-i)+i(i-1)/2}\dfrac{[2p]_q![j]_q!}{[2p-2i]_q!! [i]_q! [j-2i]_q!},
\end{equation}
where $\Mat_{2p+j,p}^{>j}(i)$ denotes the set of matchings in $\Mat_{2p+j,p}^{>j}$ with $i$ arcs connected to the first $j$ vertices, as in the proof of Proposition~\ref{Prop_GUE moments}. By summing both sides of \eqref{eq: refined identity Mat>j(i)} over $1\le i \le j$, we establish \eqref{eq: positive formula for L(x^2p H_j^2)}. Therefore, it remains to validate \eqref{eq: refined identity Mat>j(i)}.

We start by claiming that
\[
    \sum_{M\in\Mat_{a,b}} q^{\cros(M)+2\nest(M)} = \discretebinom{a}{b}_q [2b-1]_q!!.
\]
We establish this assertion through induction on $a$ and $b$. Let $\Mat_{a,b}$ be partitioned as
\[
    \Mat_{a,b} = \Mat_{a,b}^{(1)} \cup \bigcup_{x>1} \Mat_{a,b}^{(x)}
\]
Here, we let $\Mat_{a,b}^{(x)}$ be the set of matchings in $\Mat_{a,b}$ where the first vertex $v=1$ forms an arc with the vertex $x$. Notably, vertex $v$ is an isolated vertex in $\Mat_{a,b}^{(1)}$.

For a matching $M\in\Mat_{a,b}^{(1)}$, since $v$ is isolated and there are $b$ arcs after vertex $v$, vertex $v$ forms $b$ nestings and thus contributes $2b$ to $\stat(M)$. Additionally, deleting the vertex $v$ gives a bijection between $\Mat_{a,b}^{(1)}$ and $\Mat_{a-1,b}$. By the induction hypothesis, we obtain
\begin{equation}\label{eq: sum over 1 is isolated}
    \sum_{M\in\Mat_{a,b}^{(1)}}q^{\stat(M)} = q^{2b}\discretebinom{a-1}{b}[2b-1]_q!!.
\end{equation}
On the other hand, for a matching $M\in\Mat_{a,b}^{(x)}$, the first arc $(1,x)$ contributes $x-1$ to $\stat(M)$. Furthermore, deleting the arc $(1,x)$ gives a bijection between $\Mat_{a,b}^{(x)}$ and $\Mat_{a-2,b-1}$. By the induction hypothesis, we obtain
\begin{equation}\label{eq: sum over 1 is connected}
    \sum_{x=2}^{a}\sum_{M\in\Mat_{a,b}^{(x)}}q^{\stat(M)} = [a-1]_q \discretebinom{a-2}{2b-2}[2b-3]_q!!.
\end{equation}
Combining equations \eqref{eq: sum over 1 is isolated} and \eqref{eq: sum over 1 is connected}, we get
    \begin{align*}
        \sum_{M\in\Mat_{a,b}}q^{\stat(M)} &= \sum_{x=1}^{a}\sum_{M\in\Mat_{a,b}^{(x)}}q^{\stat(M)}\\
        &=q^{2b}\discretebinom{a-1}{b}[2b-1]_q!! + [a-1]_q \discretebinom{a-2}{2b-2}[2b-3]_q!!\\
        &=q^{2b}\discretebinom{a-1}{b}[2b-1]_q!! + \discretebinom{a-1}{2b-1}[2b-1]_q!! =\discretebinom{a}{b}[2b-1]_q!!.
    \end{align*}
In the third equation, we use the identity $[n]_q\discretebinom{n-1}{k}_q=[n-k]_q\discretebinom{n}{k}_q$, and in the last equation, we use the $q$-Pascal identity. In particular, by taking $a=2p-i$ and $b=p-i$, we have
\begin{equation}
    \sum_{M\in\Mat_{2p-i,p-i}} q^{\stat(M)} = \discretebinom{2p-i}{i}[2p-2i-1]_q!!.
\end{equation}

We proceed to prove a special case of \eqref{eq: refined identity Mat>j(i)} where $j=i$,
\begin{equation}\label{eq: Mat_2p+i,p,i^<j}
    \sum_{M\in\Mat_{2p+i,p,i}^{<i}(i)} q^{\stat(M)} = q^{i(i-1)/2}\dfrac{[2p]_q!}{[2p-i]_q!} \discretebinom{2p-i}{2p-2i}[2p-2i-1]_q!!.
\end{equation}
To prove this, fix a matching $M \in \Mat_{2p-i,p-i}$. We introduce $i$ new vertices to the left of $M$, placing the resulting matching in $\Mat_{2p,p-i}^{<i}(0)$. Then, we introduce new arcs and their closers larger than $i$ one by one by connecting vertex $1$ first, followed by vertex $2$, and so forth. When introducing the first arc, subject to the conditions mentioned above, there are $(2p-i+1)$ choices. These choices contribute a factor $q^{i-1}[2p-i+1]_q$. Similarly, when introducing the second arc, there are $(2p-i+2)$ choices, resulting in another factor $q^{i-2}[2p-i+2]_q$. By inductively applying this argument, we establish the claim.

Finally, consider a projection map 
\[
    \mathcal{P}:\Mat_{2p+j,p}^{<j}(i)\rightarrow \Mat_{2p+i,p}^{<i}(i)
\]
which is defined by deleting $j-i$ isolated vertices less than $j+1$. Suppose we fix a matching $M\in\Mat_{2p+i,p}^{<i}(i)$. Then, there are $\binom{j}{i}$ choices for introducing $j-i$ new isolated vertices to $M$ to obtain a matching in $\Mat_{2p+j,p}^{<j}(i)$. For each such choice, we observe that the additional contribution of an isolated vertex $v$ to $q^{\stat(M)}$ is $q^{(2p-i) + N}$, where $N$ is the count of chosen vertices less than $j+1$ larger than $v$. Utilizing the standard combinatorial interpretation of $q$-binomial coefficients, we conclude that
\[
    \sum_{\tilde{M}\in\mathcal{P}^{-1}(M)}q^{\stat(\tilde{M})} = q^{\stat(M)+(j-1)(2p-i)}\discretebinom{j}{i}.
\]
Combining this with \eqref{eq: Mat_2p+i,p,i^<j}, we have
\begin{align*}
    \sum_{M\in\Mat_{2p+j,p}^{>j}(i)} q^{\stat(M)} &= q^{(j-1)(2p-i)+i(i-1)/2}\dfrac{[2p]_q!}{[2p-i]_q!} \discretebinom{2p-i}{2p-2i}[2p-2i-1]_q!!\discretebinom{j}{i}\\&= q^{(j-i)(2p-i)+i(i-1)/2}\dfrac{[2p]_q![j]_q!}{[2p-2i]_q!! [i]_q! [j-2i]_q!}.
\end{align*}
\end{proof}

\begin{rem}
\label{Lem: Mat 2p+j,p with r}

We expect that the alternating series formula \eqref{qGUE Matching alternating} also has a combinatorial proof. 
Let us rewrite \eqref{qGUE Matching alternating} as
\begin{equation}\label{eq: sum over Mat 2p+j,p >j}
\sum_{M\in\Mat_{2p+j,p}^{>j}} q^{\stat(M)} =  \sum_{ r=0 }^{ p } (-1)^{ r }  q^{ r ( r - 1  ) }  \discretebinom{j}{2r}_q [2r-1]_q!!    \discretebinom{j+2p-2r}{2p-2r}_q   [2p-2r-1]_q!!.
\end{equation}
We introduce further notation to refine the above identity. For a matching $m\in \Mat_{2p+j,p}$, let $\cl^{\le j}(m)$ denote the count of closers less than or equal to $j$. Then, for $p,j\ge 0$ and $r\le p, \frac{j}{2}$, we expect that 
\begin{equation}\label{eq: Mat 2p+j,p with r}
    \sum_{M\in\Mat_{2p+j,p}} q^{\stat(M)}\discretebinom{\cl^{\le j}(m)}{r}_{q^2} =  \discretebinom{j}{2r}_q [2r-1]_q!!    \discretebinom{j+2p-2r}{2p-2r}_q   [2p-2r-1]_q!!.
\end{equation} 
This identity can be verified at small order using computer algebra. It is worth noting that \eqref{eq: sum over Mat 2p+j,p >j} follows from \eqref{eq: Mat 2p+j,p with r}. 
For this purpose, let $A_{p,j,k}$ be the generating function of $q^{\cros(M)+2\nest(M)}$ over all matchings $m\in \Mat_{2p+j,p}$ with exactly $k$ closers are less or equal to $j$. Note that $A_{p,j,0}$ is equal to the left-hand side of \eqref{eq: sum over Mat 2p+j,p >j}. In addition, the left-hand side of \eqref{eq: Mat 2p+j,p with r}, denoted by $a_{p,j,r}$, is given by
\[
    a_{p,j,r} = \sum_{k\ge r} A_{p,j,k} \discretebinom{k}{r}_{q^2}.
\]
Using the identity $\sum_{r=0}^k (-1)^r q^{r(r-1)} \discretebinom{k}{r}_{q^2}=0$ for $r\ge 1$, we have
\[
    A_{p,j,0} = \sum_{r\ge 0} (-1)^r q^{r(r-1)} a_{p,j,r},
\]
which leads to  \eqref{eq: sum over Mat 2p+j,p >j}.
\end{rem}



\section{Scaled moments and density}

\subsection{Proof of Theorem~\ref{Thm_genus one}}

In this subsection, we prove Theorem~\ref{Thm_genus one}.

Let us write  
\begin{equation} \label{def of FN(l)}
F_N(l):=q^{ p-l(2p-l)+l(l-1)/2  }  (1-q)^p  \dfrac{[2p]_q!}{[2p-2l]_q!! \, [l]_q!}  \sum_{j=0}^{N-1}  q^{j(2p-l)}  \discretebinom{j}{l}_q .
\end{equation}
Then we have
\begin{equation} \label{m hat F_N sum}
\Big(q(1-q)\Big)^p \widehat{m}_{N,2p}^{ \rm (dH) }=  \sum_{l=0}^p F_N(l). 
\end{equation}
We shall verify that for each $l$, $F_N(l)$ has a $1/N$-expansion with a leading order of $O(N^{l+1-p})$. 
Hence, to prove the theorem, the specific large-$N$ asymptotic behaviours of the three terms $F_N(p)$, $F_N(p-1)$, and $F_N(p-2)$ are required.
In particular, one needs to precisely analyze the summation in \eqref{def of FN(l)}, for which we will implement some ideas from recent works \cite{BKS23,ACC23}.

Recall that $s(n,k)$ denotes the Stirling numbers \eqref{def of Stirling number}. 
We first provide the asymptotic behaviours of the pre-factors in \eqref{def of FN(l)}.

\begin{lem} \label{Lem_genus 1 prefactor asymp}
Let $q$ be scaled as in \eqref{q lambda scaling}. Then as $N \to \infty,$ we have 
\begin{align*}
&\quad (1-q)^p  \dfrac{[2p]_q!}{[2p-2l]_q!! \, [l]_q!}  =  \Big( \frac{\lambda}{N} \Big)^{p} \frac{(2p)!}{(2p-2l)!!\,l!} 
 \bigg( 1+  \frac{ 3 l^2 - 4 l p - 2 p^2-l}{4} \frac{\lambda}{N} + \mathfrak{a}(p,l)\frac{\lambda^2}{N^2} +O(\frac{1}{N^3}) \bigg),
\end{align*}
where 
\begin{align*}
\mathfrak{a}(p,l)&= \frac{s(2p+1,2p-1)}{4} -s(p-l+1,p-l-1) - \frac{s(l+1,l-1)}{4}
\\
&\quad + \frac{1}{144} (12 l - 33 l^2 - 30 l^3 + 63 l^4 - 8 p + 60 l p + 
   48 l^2 p - 180 l^3 p + 6 l p^2 + 126 l^2 p^2 - 4 p^3 - 36 p^4 ). 
\end{align*}
\end{lem}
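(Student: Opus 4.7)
The plan is to clear the explicit $(1-q)$ factors first and then Taylor-expand the resulting $q$-product around $q=1$. Since $(1-q)^n [n]_q! = (q;q)_n$ and $(1-q)^{p-l}[2p-2l]_q!! = (q^2;q^2)_{p-l}$, the left-hand side simplifies to
\[
(1-q)^p \frac{[2p]_q!}{[2p-2l]_q!!\,[l]_q!} \;=\; \frac{(q;q)_{2p}}{(q^2;q^2)_{p-l}\,(q;q)_l} \;=\; \frac{\prod_{k=l+1}^{2p}(1-q^k)}{\prod_{k=1}^{p-l}(1-q^{2k})}.
\]
Using the factorisation $1-q^k = (k\lambda/N)\,g(k\lambda/N)$ with $g(x):=(1-e^{-x})/x$, the linear pre-factors combine to exactly $(\lambda/N)^p\cdot(2p)!/[(2p-2l)!!\,l!]$, and the residual factor is $R := \prod_{k=l+1}^{2p} g(k\lambda/N)\big/\prod_{k=1}^{p-l} g(2k\lambda/N)$. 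The problem is thereby reduced to expanding $R$ to order $1/N^2$.

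The analytic input is the identity $g(x) = e^{-x/2}\cdot \sinh(x/2)/(x/2)$; hence $\log g$ is $-x/2$ plus an \emph{even} power series in $x$, and in particular $\log g(x) = -x/2 + x^2/24 + O(x^4)$. Substituting $x = k\lambda/N$ or $x = 2k\lambda/N$ and summing term-by-term gives
\[
\log R = \tfrac{\lambda}{N}\,\alpha + \tfrac{\lambda^2}{N^2}\,\beta + O(N^{-4}), \quad \alpha := -\tfrac{1}{2}\sum_{k=l+1}^{2p} k + \sum_{k=1}^{p-l} k, \;\; \beta := \tfrac{1}{24}\sum_{k=l+1}^{2p} k^2 - \tfrac{1}{6}\sum_{k=1}^{p-l} k^2.
\]
Routine power-sum formulas simplify $\alpha$ to $(3l^2 - 4pl - 2p^2 - l)/4$, matching the $\lambda/N$ coefficient in the lemma. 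Exponentiating yields $R = 1 + \alpha\lambda/N + (\beta + \alpha^2/2)\lambda^2/N^2 + O(1/N^3)$, so $\mathfrak{a}(p,l) = \beta + \alpha^2/2$.

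The Stirling numbers enter in the final step through the identity $\sum_{k=1}^n k^2 = \bigl(\tfrac{n(n+1)}{2}\bigr)^2 - 2\, e_2(1,\dots,n)$ combined with $e_2(1,\dots,n) = s(n+1, n-1)$, applied with $n \in \{2p,\, l,\, p-l\}$. This writes $\beta$ as a linear combination of $s(2p+1,2p-1)$, $s(l+1,l-1)$, $s(p-l+1,p-l-1)$ plus an explicit polynomial in $p,l$; since each Stirling number is itself a polynomial in its indices, one may freely transfer polynomial pieces between the two parts, and the particular choice of Stirling coefficients $1/4,\,-1/4,\,-1$ displayed in the lemma amounts to a convenient bookkeeping normalisation. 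The main obstacle is precisely this final bookkeeping: expanding $\alpha^2$, combining with the polynomial contribution from $\beta$, and verifying that the residual polynomial in $p,l$ coincides with $P(p,l)/144$ as displayed. This step is tedious but purely mechanical, and can be verified symbolically or by direct evaluation at small values of $(p,l)$.
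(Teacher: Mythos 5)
Your proposal is correct and follows essentially the same route as the paper: both reduce the left-hand side to the ratio of products $\prod(1-q^k)$ and Taylor-expand at $q=e^{-\lambda/N}$ to second order in $\lambda/N$, with the Stirling numbers $s(n+1,n-1)$ entering as elementary symmetric functions of the indices. Your logarithmic bookkeeping via $\log\bigl[(1-e^{-x})/x\bigr]=-x/2+x^2/24+O(x^4)$ and power sums is a slightly cleaner organization of the same computation, and the final identification of $\beta+\alpha^2/2$ with the displayed $\mathfrak{a}(p,l)$ is the same purely mechanical polynomial check that the paper likewise leaves implicit (it does check out, e.g.\ at $(p,l)=(1,0),(1,1),(2,1)$).
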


\begin{proof}
Note that by definition, 
\begin{align*}
(1-q)^p  \dfrac{[2p]_q!}{[2p-2l]_q!! \, [l]_q!} &= \frac{ \prod_{j=1}^{2p} (1-q^j)   }{  \prod_{j=1}^{p-l} (1-q^{2j})  \prod_{j=1}^{l} (1-q^j)  } . 
\end{align*}
Since 
$$
1-q^m = 1-e^{ -m\lambda/N  } =\frac{m\lambda}{N} \sum_{r=0}^\infty \frac{(-1)^{r}}{(r+1)!} \Big( \frac{m\lambda}{N} \Big)^{r},  
$$
we have 
\begin{align*}
&\quad \prod_{j=1}^{2p} (1-q^j)  = \Big( \frac{\lambda}{N} \Big)^{2p} (2p)! \prod_{m=1}^{2p}\bigg( 1- \frac{ m\lambda }{ 2N } + \frac{(m\lambda)^2}{ 6N^2 } +O(\frac{1}{N^3}) \bigg)
\\
&=  \Big( \frac{\lambda}{N} \Big)^{2p} (2p)! \bigg( 1- \frac{p(2p+1)}{2} \frac{\lambda}{N} + \Big( \frac{s(2p+1,2p-1)}{4}+ \frac{p(2p+1)(4p+1)}{18}  \Big)  \frac{\lambda^2}{N^2} +O(\frac{1}{N^3}) \bigg),
\end{align*}
\begin{align*}
 \prod_{j=1}^{p-l} (1-q^{2j}) & =  \Big( \frac{\lambda}{N} \Big)^{p-l} (2p-2l)!!    \bigg( 1- \frac{(p-l)(p-l+1)}{2}  \frac{ \lambda}{N} 
\\
& \quad + \Big(  s(p-l+1,p-l-1) + \frac{(p-l)(p-l+1)(2p-2l+1)}{9}  \Big)  \frac{\lambda^2}{N^2} +O(\frac{1}{N^3}) \bigg),
\end{align*}
and
\begin{align*}
 \prod_{j=1}^{l} (1-q^j)  =   \Big( \frac{\lambda}{N} \Big)^{l} l! \bigg( 1- \frac{l(l+1)}{4} \frac{\lambda}{N} + \Big( \frac{s(l+1,l-1)}{4}+ \frac{l(l+1)(2l+1)}{36}  \Big)  \frac{\lambda^2}{N^2} +O(\frac{1}{N^3}) \bigg).
\end{align*}
This completes the proof. 
\end{proof}

We now begin with analyzing the summation in \eqref{def of FN(l)}.
For this, let 
\begin{equation}
f_l(j)= q^{j(2p-l)}  \discretebinom{j}{l}_q. 
\end{equation}

\begin{lem} \label{Lem_asymp fl Nt}
Let $q$ be scaled as in \eqref{q lambda scaling}. Then as $N \to \infty,$ we have 
\begin{align}
f_l(Nt)=  \frac{N^l}{ l!\,\lambda^l } e^{ -(2p-l) \lambda t  } (1-e^{-\lambda t})^l \Bigg( 1+ \mathcal{B}_{l,1}(t) \frac{\lambda}{N} + \mathcal{B}_{l,2}(t) \frac{\lambda^2}{N^2} +O(N^{-3}) \bigg), 
\end{align}
where 
\begin{align}
\mathcal{B}_{l,1}(t) &= \frac{l(l+1)}{4} - \frac{ e^{-\lambda t} }{1-e^{ -\lambda t }} \frac{l(l-1)}{2},
\\
\begin{split}
\mathcal{B}_{l,2}(t) &=  \Big(\frac{ e^{-\lambda t} }{1-e^{ -\lambda t }}\Big)^2  s(l,l-2)  - \frac{ e^{-\lambda t} }{1-e^{ -\lambda t }} \frac{ (-1 + l) l (-2 + 7 l + 3 l^2) }{24} 
\\
&\quad -\frac{s(l+1,l-1)}{4} + \frac{l (1 + l) (-4 + l + 9 l^2)}{144} .
\end{split}
\end{align}
Furthermore, we have 
\begin{align}
N \int_0^1 f_l(Nt)\,dt = \Big( \frac{N}{\lambda}\Big)^{l+1} \bigg( \mathcal{A}_{l,0}  +  \mathcal{A}_{l,1} \Big( \frac{N}{\lambda}\Big)^{-1} + \mathcal{A}_{l,2} \Big( \frac{N}{\lambda}\Big)^{-2} +O(N^{-3}) \bigg),
\end{align}
where 
\begin{equation} \label{def of mathcal A l0}
\mathcal{A}_{l,0} = \frac{  (2p-l-1)!}{ (2p)! }  I_{1-e^{-\lambda}} (l+1,2p-l),
\end{equation}
\begin{equation}  \label{def of mathcal A l1}
\mathcal{A}_{l,1} =  \frac{l(l+1)}{4}    \frac{  (2p-l-1)!}{ (2p)! }  I_{1-e^{-\lambda}} (l+1,2p-l) - \frac{ (l-1)}{2} \frac{  (2p-l)!}{ (2p)! }  I_{1-e^{-\lambda}} (l,2p-l+1) ,     
\end{equation}
and
\begin{align} 
\begin{split}  \label{def of mathcal A l2}
\mathcal{A}_{l,2} &= \Big(  -\frac{s(l+1,l-1)}{4} + \frac{l (1 + l) (-4 + l + 9 l^2)}{144} \Big)    \frac{  (2p-l-1)!}{ (2p)! }  I_{1-e^{-\lambda}} (l+1,2p-l) 
\\
&\quad - \frac{ ( l-1)   (-2 + 7 l + 3 l^2) }{24}    \frac{  (2p-l)!}{ (2p)! }  I_{1-e^{-\lambda}} (l,2p-l+1) 
\\
&\quad +s(l,l-2)  \frac{1}{l(l-1)}   \frac{  (2p-l+1)!}{ (2p)! }  I_{1-e^{-\lambda}} (l-1,2p-l+2).   
\end{split}
\end{align}
\end{lem}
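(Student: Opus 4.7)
The two claims are coupled: once the pointwise expansion of $f_l(Nt)$ is in hand, the integral expansion follows by termwise integration after a beta-integral change of variables. So my plan is to first do the pointwise expansion and then convert integrals to incomplete beta functions.

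\emph{Pointwise expansion.} Since $q^{j(2p-l)}=e^{-(2p-l)\lambda t}$ exactly when $j=Nt$, all $1/N$ corrections come from the $q$-binomial. Writing
\[
\discretebinom{Nt}{l}_q=\frac{\prod_{k=0}^{l-1}(1-e^{-\lambda t}\,e^{k\lambda/N})}{\prod_{m=1}^{l}(1-e^{-m\lambda/N})},
\]
I take logs and expand each factor. In the numerator I factor out $(1-e^{-\lambda t})^l$ and write
\[
\log\!\left(1-\tfrac{v}{1-v}(e^{k\lambda/N}-1)\right),\qquad v:=e^{-\lambda t},
\]
as a power series in $\lambda/N$ using $\log(1-a_k)=-a_k-a_k^2/2+O(a_k^3)$; summing over $k$ via $\sum k=l(l-1)/2$ and $\sum k^2=l(l-1)(2l-1)/6$ produces rational functions of $v/(1-v)$. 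In the denominator I use the universal expansion
\[
\log(1-e^{-x})=\log x-\tfrac{x}{2}+\tfrac{x^2}{24}+O(x^4),
\]
summed against $m=1,\dots,l$ to produce $\log[l!(\lambda/N)^l]-\tfrac{l(l+1)}{4}\tfrac{\lambda}{N}+\tfrac{l(l+1)(2l+1)}{144}\tfrac{\lambda^2}{N^2}+O(N^{-3})$. Subtracting gives a logarithmic expansion whose linear coefficient is exactly $\mathcal{B}_{l,1}(t)$; exponentiating adds $\mathcal{B}_{l,1}^2/2$ to the quadratic coefficient, and the resulting expression must be rearranged in terms of $v/(1-v)$ and $(v/(1-v))^2$ to match the form stated in the lemma.

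\emph{Integral expansion.} Each correction integrand is a rational function of $v=e^{-\lambda t}$ times $e^{-(2p-l)\lambda t}(1-e^{-\lambda t})^l$. Under the substitution $s=1-e^{-\lambda t}$, $ds=\lambda(1-s)\,dt$, the prototype integral reduces to
\[
\int_0^1 e^{-(2p-l)\lambda t}(1-e^{-\lambda t})^l\Bigl(\tfrac{v}{1-v}\Bigr)^{m}dt=\frac{1}{\lambda}\int_0^{1-e^{-\lambda}}\!s^{l-m}(1-s)^{2p-l+m-1}\,ds,
\]
which is $\lambda^{-1}B_{1-e^{-\lambda}}(l-m+1,2p-l+m)$ and hence a multiple of $I_{1-e^{-\lambda}}(l-m+1,2p-l+m)$. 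Applying this with $m=0,1,2$ to the three terms of $1+\mathcal{B}_{l,1}(t)\lambda/N+\mathcal{B}_{l,2}(t)\lambda^2/N^2$, multiplying by the prefactor $N^{l+1}/(l!\lambda^l)$, and collecting powers of $N/\lambda$ gives precisely the coefficients $\mathcal{A}_{l,0},\mathcal{A}_{l,1},\mathcal{A}_{l,2}$ in \eqref{def of mathcal A l0}--\eqref{def of mathcal A l2}.

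\emph{Main obstacle.} The heart of the computation is algebraic bookkeeping: after cubing/squaring the various Taylor pieces, the constant and $v/(1-v)$-coefficients in $\mathcal{B}_{l,2}(t)$ appear as raw polynomials in $l$, and one must recognize them as the Stirling-number combinations $s(l,l-2)$, $s(l+1,l-1)$ etc. The useful closed forms $s(n,n-1)=-\binom{n}{2}$ and $s(n,n-2)=n(n-1)(n-2)(3n-1)/24$ are what make the matching transparent, and I would verify the identification by reducing both sides to polynomials in $l$ with a common denominator. A second minor subtlety is that turning the Riemann sum $\sum_{j=0}^{N-1}$ into the integral $N\int_0^1$ requires an Euler--Maclaurin-type argument, but since $f_l$ is smooth on $[0,1]$ with uniformly bounded derivatives (after factoring out the $N^l$), the endpoint corrections are $O(N^{l-1})$ and hence absorbed into the error term $O(N^{-3})$ relative to the leading $(N/\lambda)^{l+1}$ scale.
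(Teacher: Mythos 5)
Your proof is correct and follows essentially the same route as the paper: expand the $q$-binomial factor by factor in powers of $\lambda/N$ with the factor $(1-e^{-\lambda t})^l$ pulled out (the paper multiplies the factors out directly rather than passing through logarithms, but the bookkeeping and the resulting coefficients $\mathcal{B}_{l,1},\mathcal{B}_{l,2}$ are identical), then integrate termwise via the substitution $s=1-e^{-\lambda t}$, which turns each power of $e^{-\lambda t}/(1-e^{-\lambda t})$ into an incomplete beta function and yields exactly $\mathcal{A}_{l,0},\mathcal{A}_{l,1},\mathcal{A}_{l,2}$. One caution about your closing aside: the replacement of $\sum_{j=0}^{N-1}$ by $N\int_0^1$ is not part of this lemma, and the claim that the endpoint corrections are absorbed into the error is inaccurate --- they are of relative order $N^{-1}$ and $N^{-2}$ (not $O(N^{-3})$), and in the paper's subsequent lemma these Euler--Maclaurin boundary terms are computed explicitly and contribute to $\mathcal{C}_{l,1}$ and $\mathcal{C}_{l,2}$ rather than being discarded.
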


\begin{proof}

With the scaling \eqref{q lambda scaling} and with $j=t N$,  
\begin{align*}
 \discretebinom{j}{l}_q &= \frac{[j]_q \,!}{ [l]_q !\,[j-l]_q! } = \frac{ (1-q^j) \dots (1-q^{j-l+1}) }{  (1-q^l)\dots (1-q) } 
 = \frac{N^l}{ l!\,\lambda^l } (1-e^{-\lambda t})^l \Big(1+O(N^{-1}) \Big), 
\end{align*}
as $N\to \infty.$
Here, 
\begin{align*}
&\quad (1-q^j) \dots (1-q^{j-l+1})  = (1-e^{-\lambda t})^l \prod_{m=0}^{l-1} \Big( 1 - \frac{ e^{-\lambda t} }{1-e^{ -\lambda t }} \frac{m \lambda }{N}  - \frac{ e^{-\lambda t} }{1-e^{ -\lambda t }} \frac{m^2 \lambda^2 }{2N^2} +O(N^{-3}) \Big)  
\\
&= (1-e^{-\lambda t})^l \bigg( 1- \frac{ e^{-\lambda t} }{1-e^{ -\lambda t }} \frac{l(l-1)}{2} \frac{ \lambda }{N} +\frac{ e^{-\lambda t} }{1-e^{ -\lambda t }}  \Big(  \frac{ e^{-\lambda t} }{1-e^{ -\lambda t }}  s(l,l-2)-\frac{ (l-1) l ( 2 l-1) }{12}   \Big)   \frac{\lambda^2}{N^2} +O(N^{-3}) \bigg) 
\end{align*}
Since 
\begin{align*}
&\quad \Big( (1-q^l) (1-q^{l-1}) \dots (1-q) \Big)^{-1}
 \\
 &=    \frac{N^l}{l!\,\lambda^l}  \bigg( 1 + \frac{l(l+1)}{4} \frac{\lambda}{N} + \Big( -\frac{s(l+1,l-1)}{4} +\frac{l^2(l+1)^2}{16} - \frac{l(l+1)(2l+1)}{36}  \Big)  \frac{\lambda^2}{N^2} +O(N^{-3}) \bigg)
\end{align*} 
we have
\begin{align*}
&\quad \discretebinom{j}{l}_q \Big( \frac{N^l}{ l!\,\lambda^l } (1-e^{-\lambda t})^l \Big)^{-1}
\\
&= 1+ \bigg(  \frac{l(l+1)}{4} - \frac{ e^{-\lambda t} }{1-e^{ -\lambda t }} \frac{l(l-1)}{2} \bigg) \frac{\lambda}{N}  +\bigg(  \Big(\frac{ e^{-\lambda t} }{1-e^{ -\lambda t }}\Big)^2  s(l,l-2) 
\\
&\qquad - \frac{ e^{-\lambda t} }{1-e^{ -\lambda t }} \frac{ (-1 + l) l (-2 + 7 l + 3 l^2) }{24} -\frac{s(l+1,l-1)}{4} + \frac{l (1 + l) (-4 + l + 9 l^2)}{144} \bigg) \frac{\lambda^2}{N^2}+O(N^{-3}),
\end{align*}
which completes the proof of the first assertion. The second assertion follows from straightforward computations using 
\begin{align*}
&\quad \int_0^1 e^{ - \lambda t\,a  } (1-e^{-\lambda t})^b \,dt =\frac{1}{\lambda} \int_{0}^{ 1-e^{-\lambda} } s^b (1-s)^{a-1} \,ds
\\
&=  \frac{1}{\lambda} B_{1-e^{-\lambda}} (b+1,a)
= \frac{ 1 }{\lambda} \frac{ b! \,(a-1)!}{ (a+b)! }  I_{1-e^{-\lambda}} (b+1,a). 
\end{align*}
\end{proof}

Next, we derive the asymptotic behaviour of $\sum_{j=0}^{N-1} f(j) $.

\begin{lem}
Let $q$ be scaled as in \eqref{q lambda scaling}. Then as $N \to \infty,$ we have 
\begin{align}
\sum_{j=0}^{N-1} f_l(j)& = \mathcal{C}_{l,0} \Big( \frac{N}{\lambda}\Big)^{l+1} + \mathcal{C}_{l,1} \Big( \frac{N}{\lambda}\Big)^{l} +\mathcal{C}_{l,2} \Big( \frac{N}{\lambda}\Big)^{l-1}  +  O\Big( \frac{N}{\lambda}\Big)^{l-2},     
\end{align}
where        
\begin{align}
 \mathcal{C}_{l,0} &=  \mathcal{A}_{l,0} , 
 \\
  \mathcal{C}_{l,1} &=   \mathcal{A}_{l,1}  -\frac12  \frac{1}{ l!  } e^{ -(2p-l) \lambda    } (1-e^{-\lambda  })^l ,
  \\
  \mathcal{C}_{l,2} &=  \mathcal{A}_{l,2}  +\frac{1}{12} \frac{ 1 }{ l! } e^{ -(2p-l) \lambda   } (1-e^{-\lambda })^l \, \Big(  -\frac{3l^2+l+4p}{2} +(3l^2-2l) \frac{ e^{-\lambda } }{ 1-e^{-\lambda } } \Big). 
\end{align}
Here $\mathcal{A}_{l,0}$, $\mathcal{A}_{l,1}$ and $\mathcal{A}_{l,0}$ are given by \eqref{def of mathcal A l0}, \eqref{def of mathcal A l1} and \eqref{def of mathcal A l2}, respectively. 
\end{lem}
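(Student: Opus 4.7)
The strategy is to apply the Euler--Maclaurin summation formula
\[
\sum_{j=0}^{N-1} f_l(j) = \int_0^N f_l(x)\,dx + \frac{f_l(0)-f_l(N)}{2} + \frac{f_l'(N)-f_l'(0)}{12} + O(N^{l-3})
\]
to the analytic extension $f_l(j)=q^{(2p-l)j}\prod_{k=0}^{l-1}\tfrac{1-q^{j-k}}{1-q^{k+1}}$ of the summand. The three boundary/integral terms on the right-hand side produce contributions of orders $(N/\lambda)^{l+1}$, $(N/\lambda)^{l}$, and $(N/\lambda)^{l-1}$, respectively, which assemble into the target coefficients $\mathcal{C}_{l,0}$, $\mathcal{C}_{l,1}$, $\mathcal{C}_{l,2}$.

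For the integral term, $\int_0^N f_l(x)\,dx = N\int_0^1 f_l(Nt)\,dt$ is exactly what Lemma~\ref{Lem_asymp fl Nt} evaluates, giving $\mathcal{A}_{l,0}(N/\lambda)^{l+1}+\mathcal{A}_{l,1}(N/\lambda)^{l}+\mathcal{A}_{l,2}(N/\lambda)^{l-1}$ modulo $O(N^{l-2})$. Specializing the pointwise expansion of $f_l(Nt)$ from the same lemma at $t=1$ yields
\[
f_l(N) = \frac{N^l}{l!\,\lambda^l} e^{-(2p-l)\lambda}(1-e^{-\lambda})^l\Big(1 + \mathcal{B}_{l,1}(1)\tfrac{\lambda}{N}+O(N^{-2})\Big),
\]
so the leading part of $-\tfrac12 f_l(N)$ produces the $-\tfrac{1}{2\,l!}e^{-(2p-l)\lambda}(1-e^{-\lambda})^l$ correction that, added to $\mathcal{A}_{l,1}$, gives the claimed $\mathcal{C}_{l,1}$; its subleading part (proportional to $\mathcal{B}_{l,1}(1)$) contributes at the $(N/\lambda)^{l-1}$ level of $\mathcal{C}_{l,2}$. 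For $l\ge 1$ one has $f_l(0)=0$ (the factor $(1-q^{j})|_{j=0}$ vanishes), and for $l\ge 2$ the boundary derivative $f_l'(0)$ is of order $1$, hence absorbed into the error. The cases $l=0,1$ are verified directly from the closed-form geometric sum.

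For the derivative at $j=N$, logarithmic differentiation gives
\[
f_l'(j) = f_l(j)\,\log q\,\Big((2p-l)-\sum_{k=0}^{l-1}\frac{q^{j-k}}{1-q^{j-k}}\Big),
\]
so at $j=N$ (using $q^N=e^{-\lambda}$) one finds $f_l'(N) = -\tfrac{\lambda}{N}f_l(N)\big((2p-l)-l\,\tfrac{e^{-\lambda}}{1-e^{-\lambda}}+O(N^{-1})\big)$, which places $\tfrac{1}{12}f_l'(N)$ at order $(N/\lambda)^{l-1}$. Collecting the three contributions of that order—namely $\mathcal{A}_{l,2}$, the $\mathcal{B}_{l,1}(1)$-piece from $-\tfrac12 f_l(N)$, and the leading piece of $\tfrac{1}{12}f_l'(N)$—and using $\mathcal{B}_{l,1}(1)=\tfrac{l(l+1)}{4}-\tfrac{l(l-1)}{2}\tfrac{e^{-\lambda}}{1-e^{-\lambda}}$ produces the formula for $\mathcal{C}_{l,2}$ after rearrangement.

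The main obstacle is the algebraic bookkeeping at order $(N/\lambda)^{l-1}$: one must verify that the coefficients of $\tfrac{e^{-\lambda}}{1-e^{-\lambda}}$ from the two distinct boundary contributions combine into $3l^2-2l$ while the constant piece assembles into $-\tfrac{3l^2+l+4p}{2}$. The Euler--Maclaurin remainder at level $O(N^{l-3})$ is then routine, as $f_l(j)$ is entire in $j$ and its derivatives admit uniform bounds of order $N^l/\lambda^l$ on $[0,N]$ after the substitution $j=Nt$.
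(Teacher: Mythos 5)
Your proposal is correct and follows essentially the same route as the paper: Euler--Maclaurin summation, with the integral term taken from Lemma~\ref{Lem_asymp fl Nt}, the boundary value $f_l(N)$ obtained by specializing that lemma at $t=1$, and $f_l'(N)$ computed (the paper states it directly, you via logarithmic differentiation), combined through $B_2=1/6$; your order-$(N/\lambda)^{l-1}$ bookkeeping indeed reproduces $-\tfrac{3l^2+l+4p}{2}$ and $3l^2-2l$. If anything, you are slightly more careful than the paper in noting that $f_l(0)=0$ for $l\ge 1$, that $f_l'(0)=O(1)$ is absorbed in the error only for $l\ge 2$, and in treating $l=0,1$ by direct summation.
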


\begin{proof}
We apply the Euler–Maclaurin formula \cite[Section 2.10]{NIST} 
\begin{equation}
\begin{split} \label{EMF}
\sum_{j=m}^{n} f(j) = \int_{m}^{n} f(x) \, dx + \frac{f(m)+f(n)}{2} + \sum_{k=1}^{ l-1 } \frac{B_{2k}}{(2k)!}\Big(f^{(2k-1)}(n)-f^{(2k-1)}(m)\Big) + R_l,
\end{split}
\end{equation}
where $B_n$ is the Bernoulli number defined by the generating function
\begin{equation} \label{Bernoulli number}
\frac{t}{e^t-1}=\sum_{n=0}^\infty B_n \frac{t^n}{n!} .
\end{equation}
Here the error term $R_l$ satisfies the estimate 
\[
|R_l| \le \frac{4\,\zeta(2l)}{(2\pi)^{2l}} \int_m^n|f^{(2l)}(x)|\,dx,
\]
where $\zeta$ is the Riemann zeta function.
By using \eqref{EMF}, we have 
\begin{align*}
\sum_{j=0}^{N-1} f_l(j) & = -f_l(N) + \sum_{j=0}^{N} f_l(j)
\\
&= \int_0^N f_l(x) \,dx + \frac{f_l(0)-f_l(N)}{2} + \sum_{k=1}^{ m-1 } \frac{B_{2k}}{(2k)!}\Big(f_l^{(2k-1)}(N)-f_l^{(2k-1)}(0)\Big) + R_m. 
\end{align*}
Note that 
\begin{align*}
\int_0^N f(x) \,dx =N \int_0^1 q^{ 2N t (2p-l) } \discretebinom{Nt}{l}_q\,dt .
\end{align*} 
Furthermore, as a consequence of Lemma~\ref{Lem_asymp fl Nt}, we have 
\begin{align*}
f_l(N) =  \frac{N^l}{ l!\,\lambda^l } e^{ -(2p-l) \lambda    } (1-e^{-\lambda  })^l \Bigg( 1+   \Big( \frac{l(l+1)}{4} - \frac{ e^{-\lambda } }{1-e^{ -\lambda  }} \frac{l(l-1)}{2} \Big) \frac{\lambda}{N} +  +O(N^{-2}) \bigg)
\end{align*}
and 
\begin{align*}
f_l'(N) =  \frac{N^l}{ l!\,\lambda^l } e^{ -(2p-l) \lambda   } (1-e^{-\lambda })^l \, \frac{ \lambda }{N} \Big( -(2p-l)  +l \frac{ e^{-\lambda } }{ 1-e^{-\lambda } } \Big). 
\end{align*}
Combining all of the above with $B_2=1/6$, the lemma follows. 
\end{proof}

We are now ready to prove Theorem~\ref{Thm_genus one}. 

\begin{proof}[Proof of Theorem~\ref{Thm_genus one}]

By Lemma~\ref{Lem_genus 1 prefactor asymp}, we have 
\begin{align*}
 &\quad q^{p-l(2p-l)+l(l-1)/2} (1-q)^p \dfrac{[2p]_q!}{[2p-2l]_q!! \, [l]_q!}   \bigg( \Big( \frac{\lambda}{N} \Big)^{p} \frac{(2p)!}{(2p-2l)!!\,l!}  \bigg)^{-1} \bigg|_{l=p}
 \\
 &=  1 - \frac{ p(p+3) }{4} \frac{\lambda}{N} +\bigg( \frac{p (4 + 63 p + 20 p^2 - 63 p^3)}{144} +\frac{s(2p+1,2p-1)}{4}  - \frac{s(p+1,p-1)}{4} \bigg) \frac{\lambda^2}{N^2} +O(N^{-3}),
\end{align*}
and 
\begin{align*}
 q^{p-l(2p-l)+l(l-1)/2} (1-q)^p \dfrac{[2p]_q!}{[2p-2l]_q!! \, [l]_q!}   \bigg( \Big( \frac{\lambda}{N} \Big)^{p} \frac{(2p)!}{(2p-2l)!!\,l!}  \bigg)^{-1} \bigg|_{l=p-1}
=  1 - \frac{ p^2+p+4 }{4} \frac{\lambda}{N} + O(N^{-2}).
\end{align*}
Therefore by \eqref{def of FN(l)}, we have
\begin{align*}
F_N(p) & = \frac{N}{\lambda}  \frac{(2p)!}{p!}  \mathcal{C}_{p,0} +  \frac{(2p)!}{p!}  \bigg[ \mathcal{C}_{p,1} - \frac{p(p+3)}{4} \mathcal{C}_{p,0} \bigg]
\\
& \quad  + \Big( \frac{N}{\lambda}\Big)^{-1} \frac{(2p)!}{p!}  \bigg[  \mathcal{C}_{p,2}   - \frac{p(p+3)}{4} \mathcal{C}_{p,1} 
\\
&\qquad +\bigg( \frac{p (4 + 63 p + 20 p^2 - 63 p^3)}{144} +\frac{s(2p+1,2p-1)}{4}  - \frac{s(p+1,p-1)}{4} \bigg) \mathcal{C}_{p,0} \bigg]  +O(N^{-2}). 
\end{align*}
Also we have 
\begin{align*}
F_N(p-1) &=  \frac12 \frac{(2p)!}{(p-1)!}  \mathcal{C}_{p-1,0} 
 + \Big( \frac{N}{\lambda}\Big)^{-1}  \frac12 \frac{(2p)!}{(p-1)!} \bigg[   \mathcal{C}_{p-1,1} - \frac{p^2+p+4}{4} \mathcal{C}_{p-1,0}  \bigg] +O(N^{-2}),
 \\
F_N(p-2) &=  \Big( \frac{N}{\lambda}\Big)^{-1} \frac{1}{8} \frac{(2p)!}{ (p-2)! } \mathcal{C}_{p-2,0} +O(N^{-2}).  
\end{align*}
Combining these asymptotic behaviours, it follows that 
\begin{align} 
F_N(p)+ F_N(p-1) +F_N(p-2) = \mathcal{D}_0 \,\frac{N}{\lambda} +\mathcal{D}_1 + \mathcal{D}_2 \frac{\lambda}{N} +O(N^{-2}), 
\end{align}
where 
\begin{align*}
\mathcal{D}_0=     \frac{(2p)!}{p!}  \mathcal{C}_{p,0}, \qquad  \mathcal{D}_1=    \frac{(2p)!}{p!} \bigg[ \mathcal{C}_{p,1} - \frac{p(p+3)}{4} \mathcal{C}_{p,0} \bigg]  +  \frac12 \frac{(2p)!}{(p-1)!}  \mathcal{C}_{p-1,0}
\end{align*} 
and 
\begin{align*}
\begin{split}
\mathcal{D}_2 &= \frac{(2p)!}{p!}  \bigg[  \mathcal{C}_{p,2}   - \frac{p(p+3)}{4} \mathcal{C}_{p,1}  +\bigg( \frac{p (4 + 63 p + 20 p^2 - 63 p^3)}{144} +\frac{s(2p+1,2p-1)}{4}  - \frac{s(p+1,p-1)}{4} \bigg) \mathcal{C}_{p,0} \bigg] 
\\
&\quad +  \frac12 \frac{(2p)!}{(p-1)!} \bigg[   \mathcal{C}_{p-1,1} - \frac{p^2+p+4}{4} \mathcal{C}_{p-1,0}  \bigg]  +  \frac{1}{8} \frac{(2p)!}{ (p-2)! } \mathcal{C}_{p-2,0} .
\end{split}
\end{align*}

Write $e^{-\lambda}=x.$ By \cite[Eq.(8.17.18)]{NIST}, we have 
\begin{align*}
I_{1-x}(p,p+1) &= I_{1-x} (p+1,p) + \frac{(2p)!}{ (p!)^2}  x^p(1-x)^p,
\\
I_{1-x}(p-1,p+2) &= I_{1-x}(p+1,p) + \frac{(2p)!}{(p-1)!(p+1)!} x^{p+1}(1-x)^{p-1} +\frac{(2p)!}{ (p!)^2}  x^p(1-x)^p.
\end{align*}
Using these, we have 
\begin{align*}
\mathcal{C}_{p,0} &= \frac{(p-1)!}{(2p)!} I_{1-x}(p+1,p),
\\
\mathcal{C}_{p-1,0} &
= \frac{  p!}{ (2p)! }  I_{1-x} (p+1,p) + \frac{ x^p(1-x)^p}{p!},
\\
\mathcal{C}_{p-2,0} &
=\frac{  (p+1)!}{ (2p)! }  I_{1-x } (p+1,p)  + \frac{ x^{p+1}(1-x)^{p-1} }{ (p-1)!} +\frac{p+1}{p!} x^p(1-x)^p     
\end{align*}
and 
\begin{align*}
\mathcal{C}_{p,1} 
&= \frac{-p+3}{4} \frac{p!}{(2p)!} I_{1-x}(p+1,p) -\frac12 \frac{1}{(p-1)!} x^p(1-x)^p ,
\\
\mathcal{C}_{p-1,1}
&= \frac{-p^2+p+4}{4}  \frac{ p!}{ (2p)! }  I_{1-x}(p+1,p) + \frac{-p^2+p+4}{4} \frac{x^p(1-x)^p}{ p! } - \frac{1}{2(p-2)!} x^{p+1}(1-x)^{p-1}. 
\end{align*}
Also we have 
\begin{align*}
\mathcal{C}_{p,2} &= \Big(  -\frac{s(p+1,p-1)}{4} + \frac{p+1}{p-1}s(p,p-2) -\frac{ p (16 - 51 p + 14 p^2 + 9 p^3)}{144}   \Big)      \frac{  (p-1)!}{ (2p)! }  I_{1-x } (p+1, p) 
\\
&\quad +s(p,p-2)    \Big(   \frac{1}{(p-1) \, p!} x^{p+1}(1-x)^{p-1} + \frac{ (p+1) }{ p(p-1) } \frac{1}{ p!}  x^p(1-x)^p \Big)
\\
&\quad +  \frac{-2 + 4 p - 7 p^2 - 3 p^3} {24} \frac{ 1 }{ p! } x^p (1-x)^p   +   \frac{1}{12} \frac{ 1 }{ p! } (3p^2-2p) x^{p+1}(1-x)^{p-1}. 
\end{align*} 
Using these, after some computations, we obtain 
\begin{equation}
\mathcal{D}_1 =0, \qquad \mathcal{D}_2= -\frac{p}{6} I_{1-x}(p+1,p) -\frac{(2p-1)!}{6(p-1)!^2} x^p(1-x)^{p-1} \Big( 2+p-(2p+1)x \Big).
\end{equation}
Here, we have used 
\begin{equation}
s(n,n-2)= \frac{(n-2) (n-1) n (3 n-1)}{24}. 
\end{equation}
With the same strategy, one can also demonstrate that the next $O(N^{-2})$-term vanishes, and we omit the details.
This shows the theorem. 
\end{proof}

\subsection{Proof of Theorem~\ref{Thm_genus one density}}

We first show the leading order of Theorem~\ref{Thm_genus one density}.

\begin{prop}[\textbf{Limiting spectral density for the $q$-deformed GUE}] \label{Prop_density qGUE}
Let 
\begin{equation}
a(\lambda)=   \frac{2\sqrt{(1-e^{-\lambda}) e^{-\lambda}} }{ \sqrt{\lambda } } 
\end{equation} 
Then for $|x|<1/\sqrt{\lambda}$, we have 
\begin{equation}
\frac{1}{\sqrt{N}} \widehat{\rho}_N^{\rm (dH)}( \sqrt{N} x  )  \to \widehat{\rho}^{\rm (dH)}(x), 
\end{equation}
where $\widehat{\rho}^{\rm (dH)}$ is given as follows.
\begin{itemize}
\item For $\lambda \le \log 2$,     
\begin{equation}  \label{spec density limit lam small}
 \widehat{\rho}^{\rm (dH)}(x)  = \frac{1}{\pi \lambda x} \arg \bigg( \frac{ 2 e^{-\lambda} - \sqrt{\lambda}x  +i \sqrt{  4(1-e^{-\lambda}) e^{-\lambda}-\lambda x^2 } }{  2 e^{-\lambda}+ \sqrt{\lambda} x +i \sqrt{   4  (1-e^{-\lambda}) e^{-\lambda}-\lambda x^2 }  } \bigg)  \mathbbm{1}_{(-a(\lambda),a(\lambda))}(x)
\end{equation}
\item For $\lambda \ge \log 2$,     
\begin{align}
\begin{split}
 \label{spec density limit lam big}
 \widehat{\rho}^{\rm (dH)}(x) &= \frac{1}{\pi \lambda x} \arg \bigg( \frac{ 2 e^{-\lambda} - \sqrt{\lambda}x  +i \sqrt{  4(1-e^{-\lambda}) e^{-\lambda}-\lambda x^2 } }{  2 e^{-\lambda}+ \sqrt{\lambda} x +i \sqrt{   4  (1-e^{-\lambda}) e^{-\lambda}-\lambda x^2 }  } \bigg)  \mathbbm{1}_{(-a(\lambda),a(\lambda))}(x)
 \\
 &\quad + \frac{1}{\lambda |x|} \Big( \mathbbm{1}_{ (-1/\sqrt{\lambda},1/\sqrt{\lambda} }(y) -\mathbbm{1}_{ (-a(\lambda),a(\lambda)) }(y) \Big).
\end{split}
\end{align} 
\end{itemize}
\end{prop}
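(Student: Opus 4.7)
The plan is to identify the Stieltjes transform of the putative limit density in closed form from the moment asymptotics, and then recover the density via the Sokhotski--Plemelj formula, being careful about branches.

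First I would translate Theorem \ref{Thm_genus one} into a statement about the moments of $\widehat{\rho}^{\rm (dH)}$. Combining the scaling $\widehat{m}_{N,2p}^{\rm (dH)}=(1-q)^{-p}m_{N,2p}^{\rm (dH)}$ with $q=e^{-\lambda/N}$ gives $\widehat{m}_{N,2p}^{\rm (dH)}=N^{p+1}\lambda^{-p}\mathcal{M}_{2p,0}^{\rm (dH)}(1+o(1))$, and the change of variable $x=\sqrt{N}\,y$ produces
\[
\int_{\R} y^{2p}\,\widehat{\rho}^{\rm (dH)}(y)\,dy \;=\; \frac{\mathcal{M}_{2p,0}^{\rm (dH)}}{\lambda^p} \;=\; \frac{1}{\lambda^{p+1}}\binom{2p}{p}\int_0^{1-e^{-\lambda}} t^p(1-t)^{p-1}\,dt.
\]
This integral representation of the incomplete beta function is what makes the subsequent moment series summable in closed form.

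The Stieltjes transform $\widehat{G}(z)=\sum_{p\ge0}z^{-2p-1}\int y^{2p}\widehat{\rho}^{\rm (dH)}(y)\,dy$ is then computed by interchanging sum and integral (justified since everything is supported on a compact interval) and invoking $\sum_{p\ge0}\binom{2p}{p}w^p=(1-4w)^{-1/2}$. After the substitution $s=1-t$ this produces
\[
\widehat{G}(z) \;=\; \frac{1}{\sqrt{\lambda}}\int_{e^{-\lambda}}^{1} \frac{ds}{s\sqrt{\lambda z^2-4s(1-s)}},
\]
which I would evaluate elementarily (for instance via $r=1/(2s)$, reducing it to the standard form $\int dr/\sqrt{ar^2+br+c}$) to get
\[
\widehat{G}(z) \;=\; \frac{1}{\lambda z}\log\frac{\lambda z^2 e^{\lambda}-2+\sqrt{\lambda}\,z\sqrt{\lambda z^2 e^{2\lambda}-4e^\lambda+4}}{2(\lambda z^2-1)}.
\]

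Next I would apply $\widehat{\rho}^{\rm (dH)}(x)=-\pi^{-1}\mathrm{Im}\,\widehat{G}(x+i0^+)$, so that the density equals $(\pi\lambda x)^{-1}$ times the imaginary part of the logarithm. Since $\lambda x^2-1<0$ for $|x|<1/\sqrt{\lambda}$, the denominator approaches the negative real axis from above and contributes $\pi\,\mathrm{sign}(x)$ to $\mathrm{Im}\log$, producing an extra term $(\lambda|x|)^{-1}$. The numerator then splits into cases by the sign of the discriminant $\lambda x^2 e^{2\lambda}-4e^\lambda+4$, which vanishes precisely at $|x|=a(\lambda)$: (a) if $|x|<a(\lambda)$ the numerator is genuinely complex and a short algebraic manipulation (multiply the Mobius ratio in \eqref{spec density limit lam small} by the conjugate of its denominator and match real/imaginary parts) identifies its argument with the prescribed $\arg$ expression and cancels the stray $(\lambda|x|)^{-1}$; (b) if $a(\lambda)<|x|<1/\sqrt{\lambda}$ the numerator is real, so only the denominator contributes and the $(\lambda|x|)^{-1}$ term survives. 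Since case (b) is vacuous exactly when $a(\lambda)\ge1/\sqrt{\lambda}$, i.e.\ when $\lambda\le\log 2$, this delivers \eqref{spec density limit lam small} and \eqref{spec density limit lam big}. Weak convergence of the scaled densities follows from moment convergence together with uniform compact support (giving Carleman's condition trivially).

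The main obstacle is not the calculation of $\widehat{G}(z)$ but the branch-tracking in the boundary-value step: one must pin down which branch of the inner square root is selected on either side of $|x|=a(\lambda)$, verify that the algebraic identification of the argument with the claimed Mobius-ratio holds with matching branch choices (the cancellation of the $(\lambda|x|)^{-1}$ contribution in case (a) is the tell-tale consistency check), and treat the critical case $\lambda=\log 2$ as a limit.
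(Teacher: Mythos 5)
Your overall route is the same as the paper's (limiting moments from Theorem \ref{Thm_genus one} $\to$ Stieltjes transform $\to$ generalised binomial sum under the beta-integral $\to$ Sokhotski--Plemelj); the only cosmetic difference is that you first evaluate $\widehat G$ in closed logarithmic form and take boundary values there, while the paper takes the imaginary part at the level of the $t$-integral and only then evaluates an explicit antiderivative. Your closed form for $\widehat G(z)$ is correct, and your case (a) device (multiplying the M\"obius ratio by the conjugate of its denominator) does work: with $u=\sqrt\lambda x$, $c=2e^{-\lambda}$, $S=\sqrt{4e^{-\lambda}(1-e^{-\lambda})-\lambda x^2}$ one gets $\arg\frac{c-u+iS}{c+u+iS}=\arg\big((c-u^2)+iuS\big)=\pi-\arg(\text{numerator})$, which is exactly the cancellation of the $1/(\lambda|x|)$ term you describe.

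The genuine gap is in your case (b) and in the mechanism you give for the phase transition. The claim ``$a(\lambda)\ge 1/\sqrt\lambda$ iff $\lambda\le\log 2$'' is false: since $4e^{-\lambda}(1-e^{-\lambda})\le 1$ for all $\lambda$, with equality only at $e^{-\lambda}=1/2$, one has $a(\lambda)\le 1/\sqrt\lambda$ for \emph{every} $\lambda$, with equality only at $\lambda=\log 2$ (e.g.\ as $\lambda\to0$, $a(\lambda)\to2$ while $1/\sqrt\lambda\to\infty$). So the window $a(\lambda)<|x|<1/\sqrt\lambda$ is nonempty for all $\lambda\ne\log2$, in particular for $\lambda<\log2$, and your rule ``numerator real $\Rightarrow$ only the denominator contributes $\Rightarrow$ density $=1/(\lambda|x|)$'' would then produce a spurious plateau there, contradicting \eqref{spec density limit lam small} (and the $q\to1$ semicircle limit). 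What is missing is the sign of the real numerator on that window: writing $u=\lambda x^2\in(4e^{-\lambda}(1-e^{-\lambda}),1)$, the quantity $ue^{\lambda}-2+\sqrt u\,\sqrt{ue^{2\lambda}-4e^{\lambda}+4}$ is positive iff $ue^{\lambda}\ge2$, and the minimum of $ue^\lambda$ over the window is $4(1-e^{-\lambda})$; hence the numerator is positive throughout exactly when $\lambda>\log2$ (the $1/(\lambda|x|)$ plateau survives) and negative throughout when $\lambda<\log2$, in which case its boundary argument $\pi$ cancels the denominator's $\pi$ and the density vanishes on $(a(\lambda),1/\sqrt\lambda)$. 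This sign dichotomy — in the paper's formulation, whether the upper endpoint $1-e^{-\lambda}$ of the $t$-integral lies below or above $t=1/2$, where $4t(1-t)$ peaks — is the actual source of the transition at $\lambda=\log2$, and your argument needs it to deliver both cases. A further minor point: with $\widehat\rho(x)=-\pi^{-1}\mathrm{Im}\,\widehat G(x+i0^+)$ your stated prescription ``$(\pi\lambda x)^{-1}$ times $\mathrm{Im}\log$'' carries the wrong sign unless you take the boundary value from below, as the paper does with $y-i\epsilon$; this is part of the branch bookkeeping you already flagged.
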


\begin{proof}
Let 
\begin{equation}\label{G}
G(y):= \int_{ -1/\sqrt{\lambda} }^{ 1/\sqrt{\lambda} } \frac{ \widehat{\rho}^{\rm (dH)}(x) }{ y-x }\,dx, \qquad y \in \C \setminus \R  
\end{equation}
be the resolvent. 
Note that it satisfies 
\begin{align}
\begin{split}
G(y) 
&= \sum_{p=0}^\infty \frac{1}{y^{2p+1}} \int_{ -1/\sqrt{\lambda} }^{ 1/\sqrt{\lambda} }  x^{2p}\, \widehat{\rho}^{ \rm(dH) }(x)  \,dx. 
\end{split}
\end{align}  
Then it follows that 
\begin{align*}
G(y) &  = \frac{1}{y} + \frac{1}{y} \sum_{p=1}^\infty \frac{1}{y^{2p}}  \frac{1}{\lambda^{p+1}}  \binom{2p}{p} \, \int_0^{ 1-e^{-\lambda} } t^{p} (1-t)^{p-1}\,dt 
\\
&=  \frac{1}{y} + \frac{1}{\lambda y}  \int_0^{ 1-e^{-\lambda} } \frac{1}{1-t}  \bigg(  \sum_{p=1}^\infty  \binom{2p}{p}  \Big( \frac{ t (1-t) }{ \lambda y^2  } \Big)^p  \bigg) \,dt. 
\end{align*}
Now, the generalised binomial theorem tells us
\begin{equation}
\sum_{p=0}^\infty \binom{2p}{p} z^p = \frac{1}{\sqrt{1-4z}}, \qquad |z|<1/4. 
\end{equation}
To use this, we note that the condition 
\begin{equation}
\Big| \frac{t(1-t)}{\lambda y^2} \Big| < \frac14, \qquad t \in (0, 1-e^{-\lambda })
\end{equation}
is satisfied for
\begin{equation}
|y| >   \frac{1}{ \sqrt{\lambda } }  \begin{cases}
2\sqrt{(1-e^{-\lambda}) e^{-\lambda}} & \lambda < \log 2,
\smallskip 
\\
1 & \lambda \ge \log 2. 
\end{cases}
\end{equation}
Consequently,
\begin{align}\label{4.28a}
\begin{split}
G(y) & =  \frac{1}{y} + \frac{1}{\lambda y}  \int_0^{ 1-e^{-\lambda} }   \frac{1}{1-t}  \bigg(  \frac{ \sqrt{\lambda }y }{ \sqrt{ \lambda y^2-4t(1-t) } } -1 \bigg) \,dt
 \\
&= \frac{1}{ \lambda y } \int_0^{ 1-e^{-\lambda} }   \frac{1}{1-t}    \Big( 1- \frac{ 4t(1-t) }{ \lambda y^2 } \Big)^{-\frac12}  \,dt. 
\end{split}
\end{align}

To proceed further, suppose that $|y|<1/\sqrt{\lambda}$ is given. 
Then there is range of $t$ values 
\begin{equation}
t \in \Big( \tfrac12 -y^* , \min\{ 1-e^{-\lambda}, \tfrac12+y^* \} \Big), \qquad y^*= \frac12 \sqrt{1-\lambda y^2} 
\end{equation}
for which 
\begin{equation}\label{4.29a}
 1- \frac{ 4t(1-t) }{ \lambda y^2 }  < 0. 
\end{equation}
By the Sokhotski–Plemelj formula applied to (\ref{G}), we have
\begin{equation}\label{SP}
\widehat{\rho}^{ \rm (dH) }(y) = \frac{1}{\pi} \lim_{\epsilon \to 0^+}{\rm Im} \, G(y-i \epsilon).
\end{equation}
Making use of (\ref{4.28a}) and (\ref{4.29a}) then gives
\begin{equation}\label{4.30}
\widehat{\rho}^{ \rm (dH) }(y) = \frac{1}{\pi} \frac{1}{ \lambda y } \int_{ \tfrac12 -y^* }^{  \min\{ 1-e^{-\lambda}, \tfrac12+y^* \} }   \frac{1}{1-t}    \Big( 1- \frac{ 4t(1-t) }{ \lambda y^2 } \Big)^{-\frac12}  \,dt.
\end{equation}

It is possible to evaluate this integral.
First, it follows from (\ref{4.30}) that if $\lambda < \log 2$, 
\begin{align}
\widehat{\rho}^{ \rm (dH) }(y) & =   \frac{1}{ \pi \lambda y } \int_{ \tfrac12 -y^* }^{   1-e^{-\lambda}  }   \frac{1}{1-t}    \Big(  \frac{ 4t(1-t) }{ \lambda y^2 }-1 \Big)^{-\frac12}  \,dt\,  \mathbbm{1}_{ (-a(\lambda),a(\lambda)) }(y), 
\end{align}
while if $\lambda \ge \log 2$, 
\begin{align}\label{4.32}
\begin{split}
\widehat{\rho}^{ \rm (dH) }(y) & =   \frac{1}{ \pi \lambda y } \int_{ \tfrac12 -y^* }^{   1-e^{-\lambda}  }   \frac{1}{1-t}    \Big(  \frac{ 4t(1-t) }{ \lambda y^2 }-1 \Big)^{-\frac12}  \,dt \, \mathbbm{1}_{ (-a(\lambda),a(\lambda)) }(y)
\\
&\quad + \frac{1}{ \pi \lambda y } \int_{ \tfrac12 -y^* }^{   \tfrac12 + y^*  }   \frac{1}{1-t}    \Big(  \frac{ 4t(1-t) }{ \lambda y^2 }-1 \Big)^{-\frac12}  \,dt \, \Big( \mathbbm{1}_{ (-1/\sqrt{\lambda},1/\sqrt{\lambda} }(y) -\mathbbm{1}_{ (-a(\lambda),a(\lambda)) }(y) \Big).
\end{split}
 \end{align}
Now, using computer algebra we can check that 
$$
\int \frac{1}{1-t} (At(1-t)-1)^{-\frac12} \,dt= \frac{1}{i}  \log \bigg( \frac{ 1+\sqrt{A}(1-t) +\sqrt{ 1-At (1-t) }  }{  1-\sqrt{A}(1-t)- \sqrt{ 1-At (1-t) } } \bigg).  $$
Letting $A=4/(\lambda y^2)$, we write
\begin{align*}
  F(t):=  \frac{1}{i}  \log \bigg( \frac{ 1+ \frac{2}{ \sqrt{\lambda} y } (1-t) +i \sqrt{   \frac{4 }{ \lambda y^2} t (1-t)-1 }  }{  1- \frac{2}{ \sqrt{\lambda} y } (1-t)-i \sqrt{  \frac{4}{\lambda y^2} t (1-t)-1 } } \bigg).
\end{align*}
Substituting in (\ref{4.32}) it follows that 
\begin{align}\label{4.34}
\begin{split}
\widehat{\rho}^{ \rm (dH) }(y) & = \frac{1}{\pi \lambda y} \Big( F(1-e^{-\lambda})-F(\tfrac12-y^*) \Big)  \mathbbm{1}_{ (-a(\lambda),a(\lambda)) }(y)
\\
&\quad + \begin{cases}
0  &\textup{if }\lambda \le \log 2,
\smallskip
\\
\displaystyle \frac{1}{\pi \lambda y} \Big(F(\tfrac12+y^*) - F(\tfrac12-y^*) 
 \Big) \Big( \mathbbm{1}_{ (-1/\sqrt{\lambda},1/\sqrt{\lambda} }(y) -\mathbbm{1}_{ (-a(\lambda),a(\lambda)) }(y) \Big)  &\textup{if }\lambda \ge \log 2. 
\end{cases}
\end{split}
\end{align}

The expression \eqref{4.34} can be simplified.
Note that by definition of $y^*$, 
$$
F(\tfrac12-y^*) =  \frac{1}{i}  \log \bigg( \frac{  \sqrt{\lambda} y+  1+ \sqrt{1-\lambda y^2}     }{   \sqrt{\lambda} y-  1- \sqrt{1-\lambda y^2}    } \bigg)  , \qquad  F(\tfrac12+y^*) = \frac{1}{i}  \log \bigg( \frac{ \sqrt{\lambda} y + 1- \sqrt{1-\lambda y^2}    }{  \sqrt{\lambda} y- 1+ \sqrt{1-\lambda y^2}   } \bigg) .
$$
Therefore it follows that 
\begin{equation}
F(\tfrac12+y^*) - F(\tfrac12-y^*) = \pi. 
\end{equation}
On the other hand, we have 
\begin{align*}
F(1-e^{-\lambda}) 
 &=  \frac{1}{i}  \log \bigg( \frac{ \sqrt{\lambda} y + 2 e^{-\lambda} +i \sqrt{   4  (1-e^{-\lambda}) e^{-\lambda}-\lambda y^2 }  }{  \sqrt{\lambda}y -2 e^{-\lambda}-i \sqrt{  4(1-e^{-\lambda}) e^{-\lambda}-\lambda y^2 } } \bigg)
 \\
 &= \frac{1}{i } \log \bigg( \frac{ 1+y\sqrt{\lambda} }{ 1-y\sqrt{\lambda} } \bigg)^{1/2} + \arg \bigg( \frac{ \sqrt{\lambda} y + 2 e^{-\lambda} +i \sqrt{   4  (1-e^{-\lambda}) e^{-\lambda}-\lambda y^2 }  }{  \sqrt{\lambda}y -2 e^{-\lambda}-i \sqrt{  4(1-e^{-\lambda}) e^{-\lambda}-\lambda y^2 } } \bigg). 
\end{align*}
Note here that 
\begin{align*}
 \log \bigg( \frac{ 1+y\sqrt{\lambda} }{ 1-y\sqrt{\lambda} } \bigg)^{1/2}  -  \log \bigg( \frac{  \sqrt{\lambda} y+  1+ \sqrt{1-\lambda y^2}     }{   \sqrt{\lambda} y-  1- \sqrt{1-\lambda y^2}    } \bigg)  = \pi i. 
\end{align*}
Therefore it follows that 
\begin{equation}
F(1-e^{-\lambda}) - F(\tfrac12-y^*) = \arg \bigg( \frac{ 2 e^{-\lambda} - \sqrt{\lambda}y  +i \sqrt{  4(1-e^{-\lambda}) e^{-\lambda}-\lambda y^2 } }{  2 e^{-\lambda}+ \sqrt{\lambda} y +i \sqrt{   4  (1-e^{-\lambda}) e^{-\lambda}-\lambda y^2 }  } \bigg).
\end{equation}
Combining all of the above in \eqref{4.34}, the proof is complete. 
\end{proof}

\begin{proof}[Proof of Theorem~\ref{Thm_genus one density}]

By Theorem~\ref{Thm_genus one}, we have
\begin{equation}
\frac{1}{N} \int_{ 1/\sqrt{q} }^{ 1/\sqrt{q} } \widetilde{\rho}_N^{ \rm (dH) }(x)\,d_q x = \mathcal{M}_{2p,0}^{ \rm (dH) } + \frac{ \mathcal{M}_{2p,1}^{ \rm (dH) } }{ N^2  } +O (N^{-4}  ).
\end{equation}
The leading order follows from Proposition~\ref{Prop_density qGUE} since 
\begin{equation}
\widetilde{\rho}_{(0)}(x)  = \frac{1}{\sqrt{\lambda}} \,  \widehat{\rho}^{ \rm (dH) }\Big(\frac{x}{\sqrt{\lambda} } \Big) . 
\end{equation}

Write $\eta= e^{-\lambda}$. 
By Theorem~\ref{Thm_genus one},  
\begin{align}
\mathcal{M}_{2p,1}^{ \rm (dH) } 
&= - \log \eta  \bigg(  \frac{p}{6} I_{1-\eta }(p+1,p) + \frac{(2p-1)!}{6(p-1)!^2} \eta^p (1-\eta)^{p-1} \Big( 2+p-(2p+1)\eta \Big)  \bigg)   . 
\end{align}
We need to compute 
\begin{align}
\widetilde{G}_{(1)}(x):= \frac{1}{x}+\frac{1}{x} \sum_{p=1}^\infty \frac{ \mathcal{M}_{2p,1}^{ \rm (dH) } }{ x^{2p} }. 
\end{align}

Note that 
\begin{equation}
\frac{1}{1-\eta} \, \sum_{p=1}^\infty  \frac{(2p-1)!}{6(p-1)!^2}  \Big( \frac{\eta (1-\eta)}{x^2} \Big)^{p} \Big( 2+p-(2p+1)\eta \Big)=  \frac{\eta(1-\eta) (x^2-2\eta) }{ 2x^4 } \Big( 1- \frac{4\eta (1-\eta)}{x^2} \Big)^{-5/2} . 
\end{equation}
We also have 
\begin{align}
\begin{split}
 \sum_{p=1}^\infty \frac{p}{6} \frac{1}{x^{2p}} I_{1-\eta}(p+1,p) &= \frac16   \int_0^{1-\eta} \frac{1}{1-t} \bigg( \sum_{p=1}^\infty \frac{(2p)!}{(p-1)!^2} \Big( \frac{t(1-t)}{x^2} \Big)^p \bigg) \,dt  
 \\
 &=  \frac{1}{3x^4}   \int_0^{1-\eta}   t \Big( x^2+2t(1-t) \Big) \Big( 1- \frac{4t (1-t)}{x^2} \Big)^{-5/2}    \,dt .  
\end{split}
\end{align}
Then 
\begin{align}
\begin{split}
\widetilde{G}_{(1)}(x)& = \frac{1}{x} - \lambda \frac{ e^{-\lambda} (1-e^{-\lambda})}{2 }   (x^2-2e^{-\lambda} )  \Big(   x^2-4e^{-\lambda} (1-e^{-\lambda}) \Big)^{-5/2}
\\
&\quad - \frac{ \lambda }{3 }  \int_0^{1-e^{-\lambda}}   t \Big( x^2+2t(1-t) \Big) \Big( x^2-4t (1-t) \Big)^{-5/2}    \,dt .  
\end{split}
\end{align}
Note that $x^2-4t(1-t)<0$ if  
\begin{equation}
t \in  \Big(\tfrac12-x^* , \min\{ 1-e^{-\lambda}, \tfrac12+ x^*  \} \Big), \qquad x^*= \frac12 \sqrt{1-x^2}. 
\end{equation}
Then the Sokhotski–Plemelj (\ref{SP}) formula completes the proof. 
\end{proof}

\end{document}